


\documentclass[11pt,a4paper,twoside]{article}

\usepackage{ifthen} 
\newboolean{ShowLabels}
\newboolean{BoxedResults}
	\setboolean{ShowLabels}{false}     

	\setboolean{BoxedResults}{true}    

\usepackage[utf8]{inputenc}
\usepackage[T1]{fontenc}

\usepackage{lipsum} 

\usepackage{amsmath} 	
\usepackage{IEEEtrantools} 
\usepackage{cases} 		
\usepackage{amsthm} 	
\usepackage{amsfonts} 	

\usepackage{calrsfs} 	
\DeclareMathAlphabet{\pazocal}{OMS}{zplm}{m}{n}

\usepackage{dsfont} 	

\usepackage{authblk} 	

\usepackage[dvips]{graphicx}  	
\usepackage{xcolor} 		

\usepackage{comment}

\usepackage{stackengine} 





\usepackage{a4wide} 	

\usepackage{xpatch} 	

\usepackage{lmodern} 	

\usepackage{enumitem} 	

\usepackage{stmaryrd} 	

\usepackage{titletoc} 	

\usepackage{fancyhdr} 	


\usepackage{pdfcomment} 	

\usepackage{soul} 		
\usepackage[linewidth=0.3mm]{mdframed} 

\usepackage{datetime} 
\newdateformat{monthyeardate}{\monthname[\THEMONTH] \THEYEAR} 

\setlength{\headheight}{13.59999pt}
\addtolength{\topmargin}{-1.59999pt}
\fancypagestyle{TitleStyle}
{
\fancyhf{}

\fancyfoot[L]{
	\rule{\textwidth}{0.4pt}\\ 
	\begin{scriptsize}
	\textsuperscript{1}\,
	Université Paris Cité %
	et Sorbonne Université, %
	\href{https://www.ljll.math.upmc.fr/en/}
	{Laboratoire Jacques-Louis Lions (LJLL)}, %
	Paris, France.\\[1mm]
	\textsuperscript{2}\hspace{\gap}
	Université de Bretagne Occidentale,
	\href{https://www.univ-brest.fr/laboratoire-mathematiques-bretagne-atlantique/en}{Laboratoire de Mathématiques de Bretagne Atlantique (LMBA)},
	Brest,
	France.\\[2mm]
	\end{scriptsize}
	}
}
\fancypagestyle{RegularStyle}{
\fancyhf{} 

\fancyhead[LE,RO]{{\footnotesize \thepage}} 
\fancyhead[CE]{{\footnotesize Emergence of Complexity in Opinion Propagation}}
\fancyhead[CO]{{\footnotesize Romain \textsc{Ducasse} and Samuel \textsc{Tréton}}}
}

\pagestyle{RegularStyle}

\newlength{\myindent}
\setlength{\myindent}{\parindent}

\vbadness=2100000000 		
\hbadness=2100000000

\setcounter{tocdepth}{2}	

\dottedcontents{section}[5mm]{}{5mm}{2mm} 

\newcommand{\Croch}[1]{\left[#1\right]}			

\newcommand{\Prth}[1]{\left(#1\right)}			
\newcommand{\prth}[1]{(#1)}				

\newcommand{\glmt}[1]{``#1''}				


\newcommand{\prthh}[1]{\big(#1\big)}			
\newcommand{\prthhhh}[1]{\bigg(#1\bigg)}		

\newcommand{\crochhhh}[1]{\bigg[#1\bigg]}		


\newcommand{\intervalleff}[2]{\left[#1,#2\right]}

\newcommand{\intervalleoo}[2]{\left(#1,#2\right)}

\newcommand{\intervalleE}[2]{\llbracket#1,#2\rrbracket}

\newcommand{\gap}{1mm} 
\newcommand{\gapp}{1mm} 
\newcommand{\gappp}{1mm} 

\newcommand{\N}{\mathbb{N}}

\newcommand{\R}{\mathbb{R}}


\let\sqrtTEMP\sqrt
\def\sqrt#1{\sqrtTEMP{#1\,}}

\newcommand{\point}{\includegraphics[scale=1]{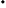}}

\renewcommand{\ast}{\star}

\DeclareRobustCommand{\loongrightarrow}{%
	\DOTSB\relbar\joinrel\relbar\joinrel\rightarrow
}

\newcommand{\itembullet}{\scalebox{0.75}{$\bullet$}}

\newcommand{\indicatrice}[1]{\mathds{1}_{#1}}	

\newcommand{\ie}{i.e.~}


\newcommand\bara[1]{\hspace{0.1mm}{\stackrel{\includegraphics[scale=1]{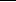}}{#1}}\hspace{0.1mm}}        

\newcommand{\verti}[1]{\left\vert #1 \right\vert}




\newcommand{\Rb}{\pazocal{R}}

\makeatletter
\AtBeginDocument{\xpatchcmd{\@thm}{\thm@headpunct{.}}{\thm@headpunct{}}{}{}}
\makeatother

\newcounter{FIGURE}
\renewcommand{\theFIGURE}{\Roman{FIGURE}}
\setcounter{FIGURE}{0}


\newcommand{\eqreff}[2]{(\hyperref[#1]{\ref*{#1}\scalebox{1}{$\,#2$}})}



\newenvironment{system}[5]
{
	\begin{IEEEeqnarray}{#4}
	\IEEEyesnumber #5 \IEEEyessubnumber*
	\smash{\hspace{#1}\left\{\IEEEstrut[#3][#3]\right.}
	\nonumber\\[#2]
}
{
	\end{IEEEeqnarray}
}
\newenvironment{systemB}[6]
{
	\begin{IEEEeqnarray}{#4}
	\IEEEyesnumber #5 \IEEEyessubnumber*
	\smash{\hspace{#1}#6\left\{\IEEEstrut[#3][#3]\right.}
	\nonumber\\[#2]
}
{
	\end{IEEEeqnarray}
}

\definecolor{green}{rgb}{0,.5,0}
\definecolor{brown}{rgb}{.8,0,0}

\newcommand{\OK}[1]{#1}



\ifthenelse{\boolean{ShowLabels}}{
  	\usepackage[notcite,notref]{showkeys} 
  	
}{
}

\newtheoremstyle{mythstyle}
    {}    
    {}    
    {\itshape}  
    {0pt}       
    {\bfseries} 
    {}          
    {5pt plus 1pt minus 1pt} 
    {}          
\theoremstyle{mythstyle} 

	\newtheorem{RAWtheorem}{\textbf{Theorem}}[section]
	\newtheorem{RAWlemma}[RAWtheorem]{\textbf{Lemma}}
	\newtheorem{RAWlemmaNOBREAK}[RAWtheorem]{\textbf{Lemma}}
	\newtheorem{RAWproposition}[RAWtheorem]{\textbf{Proposition}}
	\newtheorem{RAWpropositionNOBREAK}[RAWtheorem]{\textbf{Proposition}}
	\newtheorem{RAWclaim}[RAWtheorem]{\textbf{Claim}}
	\newtheorem{RAWcorollary}[RAWtheorem]{\textbf{Corollary}}
	\newtheorem{RAWdefinition}[RAWtheorem]{\textbf{Definition}}
	\newtheorem{RAWassumption}[RAWtheorem]{\textbf{Assumption}}

\ifthenelse{\boolean{BoxedResults}}{
	\newenvironment{theorem}{\vspace{2mm}\begin{mdframed}\vspace{-2.5mm}\begin{RAWtheorem}}{\end{RAWtheorem}\end{mdframed}\vspace{2mm}}
  	\newenvironment{lemma}{\vspace{3mm}\begin{mdframed}\vspace{-2.5mm}\begin{RAWlemma}}{\end{RAWlemma}\end{mdframed}\vspace{3mm}}
  	\newenvironment{lemmaNOBREAK}{\vspace{3mm}\begin{mdframed}[nobreak=true]\vspace{-2.5mm}\begin{RAWlemmaNOBREAK}}{\end{RAWlemmaNOBREAK}\end{mdframed}\vspace{3mm}}
	\newenvironment{proposition}{\begin{mdframed}\vspace{-3mm}\begin{RAWproposition}}{\end{RAWproposition}\end{mdframed}}
	\newenvironment{propositionNOBREAK}{\begin{mdframed}[nobreak=true]\vspace{-3mm}\begin{RAWpropositionNOBREAK}}{\end{RAWpropositionNOBREAK}\end{mdframed}}
	\newenvironment{claim}{\begin{mdframed}\vspace{-3mm}\begin{RAWclaim}}{\end{RAWclaim}\end{mdframed}}
	\newenvironment{corollary}{\begin{mdframed}\vspace{-3mm}\begin{RAWcorollary}}{\end{RAWcorollary}\end{mdframed}}
	\newenvironment{definition}{\begin{mdframed}\vspace{-3mm}\begin{RAWdefinition}}{\end{RAWdefinition}\end{mdframed}}
	\newenvironment{assumption}{\begin{mdframed}\vspace{-3mm}\begin{RAWassumption}}{\end{RAWassumption}\end{mdframed}}
}{
	\newenvironment{theorem}{\begin{RAWtheorem}}{\end{RAWtheorem}}
	\newenvironment{lemma}{\begin{RAWlemma}}{\end{RAWlemma}}
	\newenvironment{proposition}{\begin{RAWproposition}}{\end{RAWproposition}}

	\newenvironment{definition}{\begin{RAWdefinition}}{\end{RAWdefinition}}
	\newenvironment{assumption}{\begin{RAWassumption}}{\end{RAWassumption}}
}



\newtheorem{remark}[RAWtheorem]{\textbf{Remark}}

\renewenvironment{proof}[1][\proofname]{\hspace{-\myindent}{\bfseries #1.}}{\qed} 

\numberwithin{equation}{section} 

\newcommand{\Sa}[1]{S_{#1}^{\ast}} 	
\newcommand{\Nlast}{N^{\star}} 			
\newcommand{\NN}{N^{\star}}				
\newcommand{\HR}[1]{\hyperref[HR_a]{\pazocal{H}_{#1}}} 	

\newcommand{\MIN}{\hspace{0.3mm} \scalebox{0.8}{$\raisebox{-0.4mm}{$\dfrac{\verti{x}}{\tilde{c}}$}$} \hspace{0.2mm} \scalebox{0.9}{$\wedge$} \hspace{0.4mm} t \hspace{0mm}}

\newcommand{\e}{\varepsilon}

\hypersetup{
	pdftitle={Emergence of complexity in opinion propagation: A reaction-diffusion model},
	pdfauthor={Romain Ducasse and Samuel Tréton},%
	pdfsubject={},%
	pdfkeywords={
			reaction-diffusion systems, %
			SIR models, %
			Fisher-KPP dynamics, %
			spreading speed, %
			propagation properties, %
			social contagion, %
			propagation of opinions}, 
	bookmarksopen=true,
	hidelinks
}

\title{\textbf{Emergence of complexity in opinion propagation:}\\\textbf{A reaction-diffusion model}}

\date{}			

\author{%
	\href{https://sites.google.com/view/romainducasse/}{Romain {\sc Ducasse}}\textsuperscript{1}
	and
	\href{https://www.samueltreton.fr/english/}{Samuel {\sc Tréton}}\textsuperscript{2}
	}

\begin{document}

\maketitle
\thispagestyle{TitleStyle} 



\vspace{-7mm}

\begin{abstract}
We analyze a model designed to describe the spread and accumulation of opinions in a population. Inspired by the {\it social contagion} paradigm, our model is built on the classical SIR model of Kermack and McKendrick and consists in a system of reaction-diffusion equations.
In the scenario we consider, individuals within the population can adopt new opinions {\it via} interactions with others, following some simple rules. The individuals can gradually adopt more complex opinions over time.

Our main result is the characterization of a {\it maximal complexity} of opinions that can persist and propagate. In addition, we show how the parameters of the model influence this maximal complexity. 
Notably, we show that it grows almost exponentially with the size of the population,
suggesting that large communities can foster the emergence of more complex opinions.
\end{abstract}

\vspace{5mm}

\noindent{\textbf{Key words:} %
		reaction-diffusion systems, %
		SIR models, %
		Fisher-KPP dynamics, %
		spreading speed, %
		propagation properties, %
		social contagion, %
		propagation of opinions,
        emergence of complexity.
		}

\vspace{3pt}

\noindent{\textbf{MSC2020:} %
		\pdftooltip{35K40}{PDEs/Parabolic/Second order parabolic systems}, %
		\pdftooltip{35K57}{PDEs/Parabolic/Reaction-diffusion equations}, %
		\pdftooltip{35B40}{PDEs/Qualitative properties/Asymptotic behavior}, %
		\pdftooltip{91D25}{Mathematical sociology/Spatial models in sociology}. %
		}



\section{Introduction}\label{S1_intro}

\subsection{Motivation: the spread of ideas and opinions}

The mathematical modeling of the propagation of ideas, opinions, rumors, knowledge, or other \glmt{social traits} has been envisioned since at least the 18th century \cite{MartinMathematiques02}. The concept of {\it social contagion} is a paradigm used to model such phenomena: building on an analogy between the spread of contagious diseases and of ideas, several authors used modified epidemiological models to study social phenomenon as diverse as the spread of information in a network \cite{RapoportSpread53}, the diffusion of innovations \cite{ BassNew69, BettencourtPower06, ColemanIntroduction64} or the outbreaks of riots \cite{Bonnasse-GahotEpidemiological18, BurbeckDynamics78}.

The key idea behind the analogy in the {\it social contagion} paradigm is that the adoption of a new opinion by an individual occurs \textit{via} interactions with others, much like the transmission of a disease through physical contact. While this analogy has several limitations (e.g., the adoption of a new opinion can be a conscious act, may require repeated voluntary interactions, or involve active efforts), it remains compelling for two reasons. First, it offers mathematicians a range of new models that exhibit behaviors not observed in traditional epidemiological or biological models. Second, these models facilitate quantitative analysis, enabling comparisons with real-world data; see, for instance, \cite{BettencourtPower06, Bonnasse-GahotEpidemiological18}.


Most models in the literature focus on the spread of {\it a single} opinion. However, the capacity of human populations to accumulate opinions and ideas is widely recognized as a key factor in the evolution of ideologies. For sociological insights on this topic, see \cite{LahireStructures23} and references therein.

In this paper, we introduce and analyze a reaction-diffusion model that describes the spread of opinions\footnote{We use the terms {\it opinions} and {\it ideas} interchangeably. A more precise term might be {\it social trait}, referring to a value transmitted between individuals through social interactions. This concept is related to the notion of {\it meme}, introduced by Dawkins in \cite{DawkinsSelfish90}.} within a population, where individuals can gradually adopt increasingly {\it complex} opinions over time. We focus on the following questions: How do different opinions propagate? And is there a limit to the complexity of opinions that can spread?

To formalize our approach, we introduce a general reaction-diffusion model presented below as equation \eqref{syst_gen}, which is built on the following hypotheses:

\begin{enumerate}
	\item The set of all possible opinions is discrete and indexed by $\N$. 


	\item We consider a closed population: no births or deaths occur within the time scales we consider.


	\item The model incorporates a \textit{spatial structure}: the population is distributed across a domain referred to as \glmt{space}. This space may represent a geographic region where the population resides or a social network through which individuals interact. For simplicity, we assume that the space is the entire real line $\mathbb{R}$.


    \item Each individual can exist in one of two states:
	\begin{itemize}[label=\textbullet]
		\item {\it Quiet state}: the individual remains static and is receptive to adopting new opinions from others.
		\item {\it Excited state}: the individual is moving and capable of transmitting its own opinion to others.
	\end{itemize}
	We denote by $S_{n}(t,x)$ and $I_{n}(t,x)$ the densities of quiet and excited individuals, respectively, holding opinion $n \in \mathbb{N}$ at time $t > 0$, and located at point $x \in \mathbb{R}$.\footnote{The choice of the letters $S$ and $I$ is explained later in \hyperref[SS_{1}_2_rap_sir]{Section \ref*{SS_{1}_2_rap_sir}}, and comes from the analogy with epidemiological (Susceptible-Infected-Recovered) models.} 
	
    
	\item When a quiet individual with opinion $n$ encounters an excited one with opinion $k$, the quiet individual may change its opinion from $n$ to $k$.
	We model this process using a law of mass action, assuming that the rate of this opinion adoption at time $t$ and location $x$ is given by $\alpha(k,n)I_k(t,x) S_{n}(t,x)$, where $\alpha(k,n) \geq 0$ represents the likelihood of opinion transmission.
	If $\alpha(k,n) = 0$, it indicates that opinion $k$ cannot be adopted by individuals with opinion $n$.

    \item Once a quiet individual adopts a new opinion, it transitions to the excited state. He can then move and transmit its new opinion to the quiet individuals he would meet.
	This excited state persists for a certain duration before the individual returns to the quiet state.
	We denote $\mu_{n} > 0$ the rate at which an individual holding opinion $n$ ceases to be excited and reverts to the quiet state.
	Consequently, the average duration that an excited individual with opinion $n$ remains active in promoting its newly adopted idea is given by ${1}/{\mu_{n}}$.

	\refstepcounter{FIGURE}\label{FIG_base_model}
	\begin{center}
	\includegraphics[scale=1]{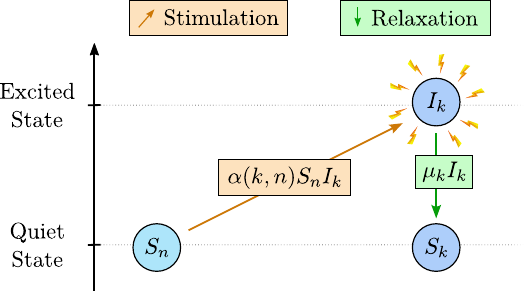}\\[2mm]
	\begin{minipage}[c]{145mm}
	\begin{center}
	\begin{footnotesize}
	\textsl{\textbf{Figure \theFIGURE} ---
		Illustration of the opinion transmission process described in the items 4.-5.-6. above.  
		}
	\end{footnotesize}
	\end{center}
	\end{minipage}
	\end{center}

    \item When individuals are in the excited state, they move randomly in space, following \textit{independent Brownian motions}.
	This movement is modeled using Laplace operators at the macroscopic scale, driving the spatial propagation of opinions.%
	\footnote{Alternative approaches incorporate nonlocal interactions instead of individual diffusion \cite{DiekmannThresholds78, DucThresh}. We reserve the exploration of such insights for future works.}.
\end{enumerate}

Finally, we define the {\it opinion graph} $\mathcal{G}$ as the directed graph whose nodes are the elements of $\N$ (representing the opinions) and whose edges are $\{(n,k) \in \N^2 \, : \, \alpha(k,n)>0 \}$.
This means that an ordered pair of nodes $(n,k)$ is connected if and only if the opinion $k$ can be adopted by individuals having the opinion $n$.

\medskip

By expressing this in the form of equations, we arrive at the following system:
\renewcommand{\gap}{5mm}
\renewcommand{\gapp}{0mm}
\begin{equation}\label{syst_gen}
	\left\lbrace \begin{array}{lllll}
		\partial_{t}S_{n} =  -\Prth{\sum\limits_{k\in \mathbb{N}}^{}\alpha\prth{k,n}I_{k}}S_{n} + \mu_{n}I_{n}, & \hspace{\gap} &
		n\in\mathbb{N}, \hspace{\gapp} & t>0, \hspace{\gapp} & x\in\mathbb{R},\\[5mm]
		\partial_{t}I_{n} = d_{n}\Delta I_{n} + \Prth{\sum\limits_{k\in \mathbb{N}}^{}\alpha\prth{n,k}S_{k}}I_{n} - \mu_{n}I_{n}, & \hspace{\gap} &
		n\in\mathbb{N}, \hspace{\gapp} & t>0, \hspace{\gapp} & x\in\mathbb{R}.
	\end{array} \right .
\end{equation}
This model is rather general, and without further assumptions on the graph $\mathcal{G}$, it can exhibit many different behaviors. In the present paper, we focus our study on the phenomenon of {\it accumulation} of opinions rather than on the process of {\it diversification}: to do so, we shall assume some specific shape on the graph $\mathcal{G}$ that we explain just after. For now, let us mention that the model \eqref{syst_gen} was studied in two situations:

\begin{itemize}
    \item When the opinion graph $\mathcal{G}$ is made of two nodes, that is, when there are only two opinions, say $0$ and $1$, and the individuals with opinion $0$ can adopt the opinion $1$. This situation is equivalent to the classical SIR model of Kermack and McKendrick \cite{KermackContributions91b}, we detail this in \hyperref[SS_{1}_2_rap_sir]{Section \ref*{SS_{1}_2_rap_sir}}, as it will greatly help to build the intuition for our case.
	\refstepcounter{FIGURE}\label{gr0}
	\begin{center}
	\includegraphics[scale=1]{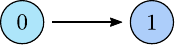}\\[2mm]
	\begin{minipage}[c]{145mm}
	\begin{center}
	\begin{footnotesize}
	\textsl{\textbf{Figure \theFIGURE} ---
		An example of the opinion graph $\mathcal{G}$ for the original SIR model.
		}
	\end{footnotesize}
	\end{center}
	\end{minipage}
	\end{center}
    \item When the opinion graph $\mathcal{G}$ is a tree made of one \glmt{root node} (say $0$), and $N$ \glmt{leaf nodes} connected to $0$ --- see \hyperref[gr1]{Figure \ref*{gr1}} below. This represents the situation where there is one \glmt{neutral} opinion (which cannot be transmitted), while the $N$ other opinions can only be transmitted to individuals holding the neutral opinion. In this configuration, the $N$ different opinions are in competition. The first author showed in \cite{DucassePropagation23} that this model exhibits a {\it selection} phenomenon: only a subset of all the possible opinions spreads.
\refstepcounter{FIGURE}\label{gr1}
\begin{center}
\includegraphics[scale=1]{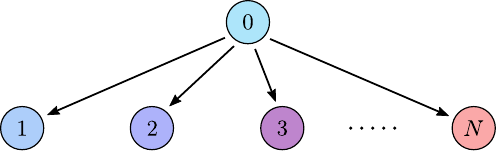}\\[2mm]
\begin{minipage}[c]{145mm}
\begin{center}
\begin{footnotesize}
\textsl{\textbf{Figure \theFIGURE} ---
	An example of the opinion graph $\mathcal{G}$ for the competition of $N$ opinions.
	}
\end{footnotesize}
\end{center}
\end{minipage}
\end{center}
\end{itemize}
In this paper, we study a new instance of \eqref{syst_gen} focused on the accumulation of opinions, by assuming a specific structure on the opinion graph $\mathcal{G}$ --- see \hyperref[gr2]{Figure \ref*{gr2}} below.

\subsection{The model we consider}\label{SS_12_the_model}

Considering a general opinion graph $\mathcal{G}$ in \eqref{syst_gen} leads to rather complex behaviors. A first natural restriction is to consider acyclic graph, that is, graphs that contain no loops.
Mathematically, this means that for any sequence of integer $(n_{1},\cdots,n_{p}) \in \N^p$, we have $\alpha(n_1,n_2)\alpha(n_2,n_3)$ $\cdots\,\alpha(n_{p-1},n_p)\alpha(n_p,n_1)=0$. This is verified in the two situations mentioned above.

We focus here on the situation where the opinion graph $\mathcal{G}$ is $\N$ where the oriented edges are the couples of adjacent points $(n,n+1)$ --- see \hyperref[gr2]{Figure \ref*{gr2}} below.
In this setting, an individual can only adopt opinion $n$ if it holds the previous opinion $n-1$.
Consequently, the integer $n$ serves as a measure of the \textit{opinion's complexity}: the higher $n$, the harder it becomes for an individual to acquire opinion $n$, as it must have acquired all prerequisite opinions $1$, $2$, $\cdots$, $n-1$ beforehand. We see the first opinion $n=0$ acts as a \glmt{basis} opinion, and assume that all individuals possess this opinion initially.

\refstepcounter{FIGURE}\label{gr2}
\begin{center}
\includegraphics[scale=1]{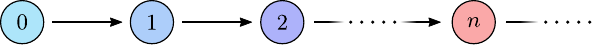}\\[2mm]
\begin{minipage}[c]{145mm}
\begin{center}
\begin{footnotesize}
\textsl{\textbf{Figure \theFIGURE} ---
	The opinion graph $\mathcal{G}$ for the accumulation of opinions.
	}
\end{footnotesize}
\end{center}
\end{minipage}
\end{center}

This setting boils down to assuming, in the general system \eqref{syst_gen}, that $\alpha(k,n) = \alpha_{k}\delta_{k=n+1}$, where $\alpha_k>0$: only excited individuals with opinion $n+1$ may transmit their opinion with individuals with opinion $n$.

\medskip

We are thus led to the following system, which constitutes the main focus of this paper:
\renewcommand{\gap}{17mm}
\renewcommand{\gapp}{4mm}
\vspace{1mm}
\begin{system}{-4.8mm}{-11.6mm}{8.3mm}{lllll}{\label{EQ_MODEL}}
	\partial_{t}S_{0} = -\alpha_{1} S_{0} I_{1}, & &&
	t>0, & x\in\mathbb{R},\label{EQ_MODEL_a}\\[1mm]
	\partial_{t}I_{n} = d_{n}\Delta I_{n} + \alpha_{n}S_{n-1}I_{n} - \mu_{n}I_{n}, & \hspace{\gap} &
	n\in\mathbb{N}^{\star}, \hspace{\gapp} & t>0, \hspace{\gapp} & x\in\mathbb{R},\label{EQ_MODEL_b}\\[-0.5mm]
	\partial_{t}S_{n} = - \alpha_{n+1}S_{n}I_{n+1} + \mu_{n}I_{n}, &&
	n\in\mathbb{N}^{\star}, & t>0, & x\in\mathbb{R}.\label{EQ_MODEL_c}
\end{system}
The basis opinion $n=0$ (the simplest one) cannot be transmitted to other individuals, so there is no need to consider an excited state for $n=0$. For this reason, we do not consider any function $I_0(t,x)$.

This model is specifically designed to focus on the mechanisms of {\it accumulation}, or {\it complexification} of opinions. We believe that, combining the results of the present paper with the results of \cite{DucassePropagation23}, one would be able to treat the case where the opinion graph $\mathcal{G}$ is a general acyclic oriented graph.

\medskip

The dynamics of the model are depicted in \hyperref[FIG_model]{Figure \ref*{FIG_model}} below.
Initially, all the quiet individuals have opinion $n=0$ and reside in the $S_{0}$ compartment.
When a quiet individual with opinion $n=0$ encounters an excited individual with opinion $n=1$, the quiet individual can adopt opinion $n=1$ and transitions from compartment $S_{0}$ to $I_{1}$.
This transition occurs at a rate $\alpha_{1} S_{0} I_{1}$.
Once in the $I_{1}$ compartment, these newly excited individuals remain active for an average duration of ${1}/{\mu_{1}}$, after which they move to the quiet state in compartment $S_{1}$, now holding opinion $n=1$.

The process then continues as such, when a quiet individual with opinion $n$ meets with an excited having opinion $n+1$, he adopts the opinion $n+1$, \ie it transitions from compartment $S_{n}$ to $I_{n+1}$ with rate $\alpha_{n+1} S_{n} I_{n+1}$, resulting in the gradual accumulation of opinions.

Note that interactions only occur between individuals having opinions separated by one degree of complexity (for instance, quiet individuals holding opinion $n=1$ cannot directly adopt the more complex opinion $n=3$).


\refstepcounter{FIGURE}\label{FIG_model}
\begin{center}
\includegraphics[scale=1]{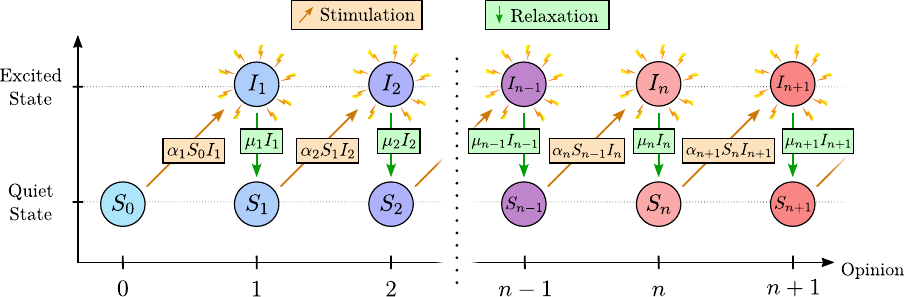}\\[2mm]
\begin{minipage}[c]{145mm}
\begin{center}
\begin{footnotesize}
\textsl{\textbf{Figure \theFIGURE} ---
	Evolution of the population by transitions through the opinion compartments.
	}
\end{footnotesize}
\end{center}
\end{minipage}
\end{center}

One of the main questions is the following:
\begin{center}
    {\bf What is the maximal complexity that the population can achieve?}
\end{center}
More precisely, starting from a population where all the individuals have the opinion $n=0$, will we see the emergence of individuals with opinion $n=1$, $2$, $3$, etc.? If yes, up to which complexity?

Our main result, \hyperref[TH_know_spread_description]{Theorem \ref*{TH_know_spread_description}}, completely describes the long-time behavior of the system, and tells us exactly which opinions spread, and with which {\it speed} (we define this notion below). In particular, depending on the parameters of the system, we give a way to compute the \textit{maximal complexity} that will be reached by the population, that we denote $N^{\star} \in \N\cup\{+\infty\}$.
Although the expression of $\Nlast$ is implicit, we give in \hyperref[th_quali]{Theorem \ref*{th_quali}} some qualitative properties of $\Nlast$, seen as a function of the system's parameters.

\medskip


The dynamical system \eqref{EQ_MODEL} must be supplemented with initial conditions, representing the initial distribution of opinions within the population.
At initial time $t=0$, we assume that only the basic opinion $n=0$ is represented. Specifically, we set
$$
S_{0}(t=0,x) \equiv S_{0}^{\star} \in \mathbb{R}_{+},
\qquad
\text{and}
\qquad
S_{n}(t=0,x)\equiv 0, \qquad \forall n \in \N^{\star}.
$$
Here, $S_{0}^{\star}$ is a nonnegative constant, reflecting a uniform distribution of individuals across space.

To trigger the dynamics, we introduce some {\it small perturbations} by adding a limited quantity of excited individuals for each opinion. We set
$$
I_{n}(t=0,x) = I_{n}^{0}(x), \qquad \forall n \in \N^{\star},
$$
where the functions $I_{n}^{0} : \mathbb{R} \to \mathbb{R}$ are continuous, non-negative, non-zero and compactly supported.
We then investigate which of these perturbations can persist and propagate.

We summarize this initial condition as
\begin{equation}\label{EQ_MODEL_data}
\left\lbrace
\begin{array}{llll}
	S_{0}|_{t=0} \equiv S_{0}^{\star} \in \mathbb{R}_{+}, & \qquad &&
	x\in\mathbb{R},\\[2mm]

 	S_{n}|_{t=0} \equiv 0, & \qquad &
	n\in\mathbb{N}^{\star}, & x\in\mathbb{R},\\[1mm]
	I_{n}|_{t=0} = I_{n}^{0}, & \qquad &
	n\in\mathbb{N}^{\star}, & x\in\mathbb{R}.\\[1mm]
\end{array}
\right .
\end{equation}

\subsection{Background: the SIR reaction-diffusion model}\label{SS_{1}_2_rap_sir}

As we mentioned earlier, both the general system \eqref{syst_gen} and the particular one \eqref{EQ_MODEL} studied in this paper are inspired by mathematical epidemiology, particularly by the SIR (Susceptible-Infected-Recovered) model introduced by Kermack and McKendrick \cite{KermackContributions91b}.
In this section, we present the SIR model together with some related results, as it helps to build intuition and fix the notations for our model.

The SIR model describes the spread of a disease in a population, where individuals are divided into three \textit{compartments} (hence the term compartmental models):
\begin{itemize}[label=\textbullet]
	\item {\it The susceptibles}: they do not have the disease but are at risk of catching it.
	\item {\it The infected}: they are currently infected and can transmit the disease.
	\item {\it The recovered}: they have recovered from the disease and are immune, meaning they cannot be reinfected (no waning of immunity).
\end{itemize}

\medskip

The density of susceptibles, infected and recovered at time $t>0$ and position $x\in\R$ are denoted $S(t,x)$, $I(t,x)$, and $R(t,x)$, respectively. When an infected individual encounters a susceptible, the susceptible can become infected according to a law of mass action, that is at a rate $\alpha S I$, where $\alpha>0$ represents the likelihood of transmission. Infected individuals then recover at a constant rate $\mu>0$.

Although the original SIR model did not account for spatial dynamics, this aspect was later incorporated by several authors. The extension of interest here is the one by Källen \cite{KallenThresholds84}, who assumed that the infected individuals, and only them, diffuse with a diffusivity constant rate $d>0$. This leads to the following system:
\renewcommand{\gap}{10mm}
\renewcommand{\gapp}{2mm}
\begin{equation}\label{model_kallen}
\left\lbrace
\begin{array}{llll}
	\partial_{t}S = -\alpha S I,
	&& t>0, \hspace{\gapp} & x\in\mathbb{R},\\[1mm]
	\partial_{t}I = d  \Delta I + \alpha SI - \mu I,
	&& t>0, \hspace{\gapp} & x\in\mathbb{R},\\[1mm]
	\partial_{t}R =  \mu I,
	&& t>0, \hspace{\gapp} & x\in\mathbb{R}.
\end{array}
\right .
\end{equation}
\begin{remark}\label{req diff} In \cite{KallenThresholds84}, the author chose not to include diffusion term on $S$ in \eqref{model_kallen}, a decision driven by modeling considerations. When there is such a diffusion term, the analysis is more complex, and some points are still open, we refer to \cite{DucrotSpreading21} for more details on this topic.

Here, we follow the same formalism as in the model of Källen: the quiet individual do not diffuse in our model \eqref{EQ_MODEL}, only the excited ones.
\end{remark}


Observe that up to renaming $S$ as $S_{0}$, $I$ as $I_{1}$ and $R$ as $S_{1}$, the SIR system \eqref{model_kallen} is equivalent to our model \eqref{EQ_MODEL}, in the particular case where there are only two opinions.
In other words, the SIR model is the simplest instance of the general model \eqref{EQ_MODEL}.

The SIR system \eqref{model_kallen} is completed with initial datum $(S^{0}(x),I^{0}(x),R^{0}(x))$, which represents the initial spatial distribution of each group of individuals.
We assume $S^{0}(x) \equiv S^{\star}\in \mathbb{R}_{+}$, which means that the susceptibles are initially uniformly distributed across space at the initial time.
Next, we assume $I^{0}$ is non-negative, non-zero, continuous and compactly supported, reflecting the fact that at the initial time, there are only a few infected individuals, and they are localized in space (in the initial \textit{focus of infection}). Finally, we set $R^{0}(x)\equiv 0$; while not necessary, this assumption is natural: at the initial time, no one has recovered yet, as the infection is just beginning.

The main result concerning the SIR system \eqref{model_kallen} is the existence of a {\it threshold effect}.
Let $S_\infty(x) : = \lim_{t\to+\infty}S(t,x)$, representing the {\it final density of susceptibles} that remain unaffected after the epidemic, and define the {\it basic reproduction number} \OK{$\mathcal{R}_0 := \frac{\alpha}{\mu} S^{\star}$}.

\begin{itemize}
    \item If $\mathcal{R}_0 \leq 1$, the disease fades away in the sense that
    $$
    S_\infty(x)\underset{\verti{x} \to +\infty}{\longrightarrow} S^{\star},
    $$
    that is, the final density of susceptible is equal to the initial density, at least for large $\verti{x}$. Hence the disease did not spread.
	Observe that only the large values of $\vert x \vert$ matter: near the initial focus of infection (the support of $I^{0}$), some contamination are inevitable, and $S_\infty$ may even be very low in this region, even if the disease does not spread.

    \item If $\mathcal{R}_0>1$, the disease spreads in the sense that there is $S^{\dagger} \in (0, S_{0})$ such that 
    $$
    \sup_{\vert x \vert >\delta}\vert S_\infty(x) - S^{\dagger}\vert \underset{\delta \to +\infty}{\longrightarrow}0,
    $$
    meaning that the density of susceptible is strictly smaller than the initial density, even far from the initial focus of infection. This indicates that the disease has spread.

	The value $S^\dagger$ is solution of a transcendental equation --- see below for further details.
\end{itemize}

It turns out that we can refine the previous result. When $\mathcal{R}_0>1$, we can characterize the {\it spreading speed} of the epidemic. Define
$$
c^{\star} = 2\sqrt{d(\alpha S^{\star} - \mu) },
$$
then
$$
\sup_{ct < \vert x \vert } \vert S - S^{\star}\vert \underset{t\to+\infty}{\longrightarrow}0, \qquad \text{for all}~ c >c^{\star},
$$
\pagebreak
and
$$
\sup_{\delta < \vert x \vert < ct} \vert S - S^{\dagger}\vert \underset{\delta,t\to+\infty}{\longrightarrow}0, \qquad \text{for all}~ c\in (0,c^{\star}).
$$
The first equation implies that the disease does not spread faster than $c^{\star}$: an observer moving in one direction with speed $c>c^{\star}$ will eventually see around him a density of susceptible equal to the initial value $S^{\star}$, meaning the population there remains unaffected there.

Conversely, the second equation shows that the disease propagates at least as fast as $c^{\star}$: an observer moving at speed $c<c^{\star}$ will observe around him a density of susceptible equal to $S^{\dagger}$ (which is strictly smaller than $S^{\star}$), meaning the population has already been affected before the observer arrives. Thus, the disease spreads faster than the observer.
We say that the function $S(t,x)$ connects $S^{\star}$ to $S^\dagger$ with spreading speed $c^{\star}$.

The propagation of the epidemic is illustrated in \hyperref[FIG_classical_SIR]{Figure \ref*{FIG_classical_SIR}} below, which depicts the situation at some time $t>0$: the population of susceptibles forms two interfaces, while the infected population forms two bumps, both traveling left and right at speed $c^{\star}$. Meanwhile, the recovered population forms a front traveling at the same speed.

\medskip

\refstepcounter{FIGURE}\label{FIG_classical_SIR}
\begin{center}
\includegraphics[scale=1]{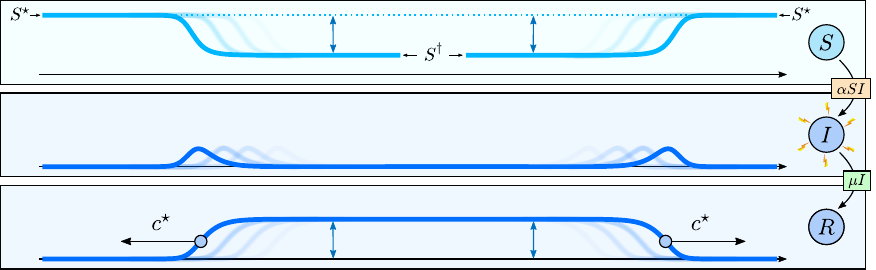}\\[3mm]
\begin{minipage}[c]{145mm}
\begin{center}
\begin{footnotesize}
\textsl{\textbf{Figure \theFIGURE} ---
	The SIR reaction-diffusion model in the case of propagation of the disease ($\mathcal{R}_0>1$).
	}
\end{footnotesize}
\end{center}
\end{minipage}
\end{center}

\medskip

Let us explain how these results are derived, as it will provide insight into the upcoming proofs.
The key idea is to observe that the function $R$ satisfies the equation
\begin{equation}\label{ex kpp}
\partial_{t} R = d \Delta R + \mu S^{\star}(1-e^{-\frac{\alpha}{\mu}R}) - \mu R + \mu I_0,  \qquad t>0, \, x \in \R.
\end{equation}
This comes from the following computations.
First, observe that we can integrate the equation for $S$ in \eqref{model_kallen} to obtain
$$
S(t,x)=S^{\star} e^{-\frac{\alpha}{\mu}R(t,x)}.
$$
Substituting this in the equation for $I$, we get
$$
\partial_{t}I = d  \Delta I + \alpha S^{\star} e^{-\frac{\alpha}{\mu}R(t,x)}I - \mu I.
$$
Now, multiplying by $\mu$ and using the fact that $\partial_{t} R = \mu I$ yields
$$
\partial_{tt}R = d  \partial_{t} \Delta R  + \alpha S^{\star} e^{-\frac{\alpha}{\mu}R(t,x)}\partial_{t} R - \mu \partial_{t} R.
$$
By integrating over $t$, we arrive at equation \eqref{ex kpp}.

\pagebreak

Equation \eqref{ex kpp} is a reaction-diffusion equation whose reaction term
$$f(z)= \mu S^{\star}(1-e^{-\frac{\alpha}{\mu}z})-\mu z$$
is concave.
Such equations are referred to as {\it KPP reaction-diffusion equation}, named after Kolmogorov, Petrovski and Piskunov, who studied them in \cite{KolmogorovStudy37}.
They proved that, if $f^\prime(0)>0$, then the solution of the equation propagates toward the unique positive stationary equilibrium of the equation at speed $2\sqrt{d f^\prime(0)}$. In our case, $f^\prime(0)=\alpha S^{\star} - \mu$, which gives the desired result.

For further details on reaction-diffusion equations, we refer to \cite{AronsonMultidimensional78, KolmogorovStudy37}.
A standard approach to prove these results involves comparing the solution $R$ of \eqref{ex kpp} with subsolutions and supersolutions, which is the main strategy we employ in this paper.

\section{Notations and main results}\label{S2_main_results}

\noindent Before stating our results, in order to justify our notations, we give a heuristic explanation of the dynamics of our system \eqref{EQ_MODEL}.
In \hyperref[FIG_propa_graph]{Figure \ref*{FIG_propa_graph}}, \OK{we represent a typical snapshot of the functions $S_{0}$, $I_{1}$, $S_{1}$, $I_{2}$, etc.,
with each function displayed on a separate graph for clarity. For simplicity, we only depict the right-hand side of the space, as the situation on the left is symmetric.}

At initial time, all individuals are quiet and have opinion $n=0$ \OK{--- \ie they are all in the compartment $S_{0}$.}
\OK{Then, we consider small perturbations. Specifically, for each $n \in \N^{\star}$, a small group of localized excited individuals with opinion $n$ is introduced, with their densities denoted by $I_{n}^{0}$.}

\medskip

Let us start to have a look at the the equations governing the populations $S_{0},I_{1},S_{1}$:
\renewcommand{\gap}{10mm}
\renewcommand{\gapp}{2mm}
\begin{equation}
	\left\lbrace
	\begin{array}{llll}
		\partial_{t} S_{0} = - \alpha_{1} S_{0} I_{1},
		&& t>0, \hspace{\gapp} & x\in\mathbb{R},\\[1mm]
		\partial_{t} I_{1} =  d_{1}\Delta I_{1} + \alpha_{1} S_{0} I_{1} - \mu_{1} I_{1},
		&& t>0, \hspace{\gapp} & x\in\mathbb{R},\\[1mm]
	\partial_{t} S_{1} = \mu_{1} I_{1} - \alpha_2 S_{1} I_{2},
		&& t>0, \hspace{\gapp} & x\in\mathbb{R}.
	\end{array}
	\right .
\end{equation}
This \OK{system} looks like the SIR model,
\OK{where $S_{0}$, $I_{1}$, $S_{1}$ play the role of the susceptible, infected and recovered respectively,}
but with a removal term $-\alpha_2 S_{1} I_{2}$.
If we \OK{disregard} this removal term, \OK{it is natural to expect, similarly to the SIR system}
(see \hyperref[SS_{1}_2_rap_sir]{subsection \ref*{SS_{1}_2_rap_sir}}),
\OK{that when the basic reproduction number $\mathcal{R}_{1} := \frac{\alpha_{1}}{\mu_{1}}S_{0}^{\star}$ exceeds $1$, the density $S_{0}(t,x)$ will decrease from its initial value $S_{0}^{\star}$ towards some value $S_{0}^{\dagger} > 0$.}
\OK{This decrease is expected to occur by the formation of an interface that moves to the right and to the left at a speed $c_1 := 2\sqrt{d_{1}(\alpha_{1} S_{0}^{\star} - \mu_{1})}$, as illustrated in the first line of \hyperref[FIG_propa_graph]{Figure \ref*{FIG_propa_graph}}.}
\OK{Meanwhile, the function $I_{1}$ forms a traveling bump that propagates at the same speed, shown in the second line of the figure.}

\OK{As} $I_{1}$ propagates and becomes quiet, \OK{the population $S_{1}$ emerges, forming a front that connects $0$ to some value $S_{1}^{\star}$, as shown in the third line of \hyperref[FIG_propa_graph]{Figure \ref*{FIG_propa_graph}}.}
\OK{However}, unlike in the SIR system, the $S_{1}$ individuals can \OK{still} be \glmt{contaminated} by the \OK{$I_{2}$ population afterwards}.

\pagebreak

At the initial time $t=0$, there is no population $S_{1}$.
\OK{As a result}, the population $I_{2}$ has no one to \OK{influence}, \OK{so it simply decays and does not propagate}.
However, as the population $S_{1}$ \OK{begins to emerge}, the population $I_{2}$ will find individuals to convince.
More precisely, if we wait \OK{long enough for $S_{1}(t,x)$ to grow towards $S_{1}^{\star}$ over a sufficiently large region}, \OK{then} the population $I_{2}$ \OK{will encounter a local concentration of $S_{1}^{\star}$ individuals (with opinion $n=1$)} to transmit its opinion $n=2$.


\OK{We can then} expect that if the new basic reproduction number
$\mathcal{R}_{2} := \frac{\alpha_2}{\mu_2}S{1}^{\star}$
is strictly greater than $1$, \OK{then} the population $I_{2}$ \OK{will} propagate --- as shown in the fourth line of \hyperref[FIG_propa_graph]{Figure~\ref*{FIG_propa_graph}} --- with some speed
$c_{2} := 2\sqrt{d_2(\alpha_2 S_{0}^{\star} - \mu_2)}$.
\OK{As this happens, the number of individuals holding opinion $n=1$ will decrease: after initially increasing due to the propagation of $I_{1}$, the density $S_{1}(t,x)$ will decline and stabilize at a new value $S_{1}^{\dagger}$, with the interface moving at speed $c_2$.}
This is depicted in the third line of \hyperref[FIG_propa_graph]{Figure \ref*{FIG_propa_graph}}, where $S_{1}$ first increases towards a plateau $S_{1}^{\star}$, approximately in the region $(c_2t, c_1t)$, and then decreases to a new plateau $S_{1}^\dagger$ in the zone $(0, c_2t)$.
We see here that it is natural to assume that $c_2 < c_1$, as otherwise the left-moving interface would catch up with the right-moving one, leading to a degenerate situation.

As the population $I_{2}$ propagates and becomes quiet, the population $S_{2}$ emerges, forming a front that propagates at speed $c_2$, as shown in the fifth line of \hyperref[FIG_propa_graph]{Figure \ref*{FIG_propa_graph}}. \OK{This emergence then gives the} opportunity to the $I_3$ population to propagate, leading to \OK{the appearance of $S_3$, and this process continues in the same manner.}

\OK{If} the basic reproduction number $\mathcal{R}_{n+1} := \frac{\alpha{n+1}}{\mu_{n+1}}S_{n}^{\star}$ is strictly larger than $1$, \OK{this enables the population $I_{n+1}$ to propagate and spread at speed $c_{n+1} := 2\sqrt{d_{n+1}(\alpha_{n+1} S_{n}^{\star} - \mu_{n+1})}$.}
\OK{As a result,} $S_{n}(t,x)$ will decrease as $I_{n+1}$ propagates: for $\vert x \vert < c_{n+1}t$, $S_{n}(t,x)$ \OK{is expected to} converge to some $S_{n+1}^\dagger$, while in the region $\vert x \vert \in (c_{n+1}t, c_n t)$, we should have $S_{n}(t,x) \approx S_{n}^{\star}$. The dynamics of $S_{n}$ \OK{are illustrated in} \hyperref[FIG_propagation_Sn]{Figure \ref*{FIG_propagation_Sn}}.

\refstepcounter{FIGURE}\label{FIG_propagation_Sn}
\begin{center}
\includegraphics[scale=1]{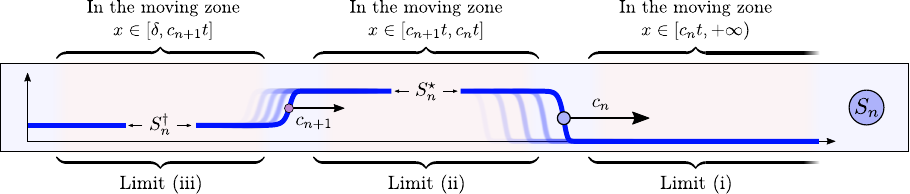}\\[3mm]
\begin{minipage}[c]{145mm}
\begin{center}
\begin{footnotesize}
\textsl{\textbf{Figure \theFIGURE} ---
	Asymptotic shape of $S_{n}$ in the case of propagation $\prth{n\in\intervalleE{1}{\NN}}$. The limits \eqref{EQ_th_i}, \eqref{EQ_th_ii} and \eqref{EQ_th_iii} refer to those of \hyperref[TH_know_spread_description]{Theorem \ref*{TH_know_spread_description}}.
	}
\end{footnotesize}
\end{center}
\end{minipage}
\end{center}
The dynamics may stop at some \OK{value of} $n$. \OK{Suppose there exists a rank, denoted by $N^{\star}$,} such that the basic reproduction number
$\mathcal{R}_{N^{\star}+1} = \frac{\alpha{N^{\star}+1}}{\mu_{N^{\star}+1}} S_{N^{\star}}^{\star}$
is less than or equal to $1$.
Drawing from the intuition developed from the SIR model, it is natural to think that the population $I_{N^{\star}+1}$ will not propagate.

\OK{As a result,} the population $S_{N^{\star}}$ \OK{will} converge \OK{towards} some $S_{N^{\star}}^{\star}$ but, \OK{since it will not be affected by $I_{N^{\star}+1}$, it will not experience a decay at the rear.}
\OK{Instead, it will form a simple front, as depicted in the last line of \hyperref[FIG_propa_graph]{Figure \ref*{FIG_propa_graph}}.}
\OK{Consequently,} all opinions with indices larger than $N^{\star}+1$ \OK{will also fail to propagate.}

\refstepcounter{FIGURE}\label{FIG_propa_graph}
\begin{center}
\includegraphics[scale=0.96]{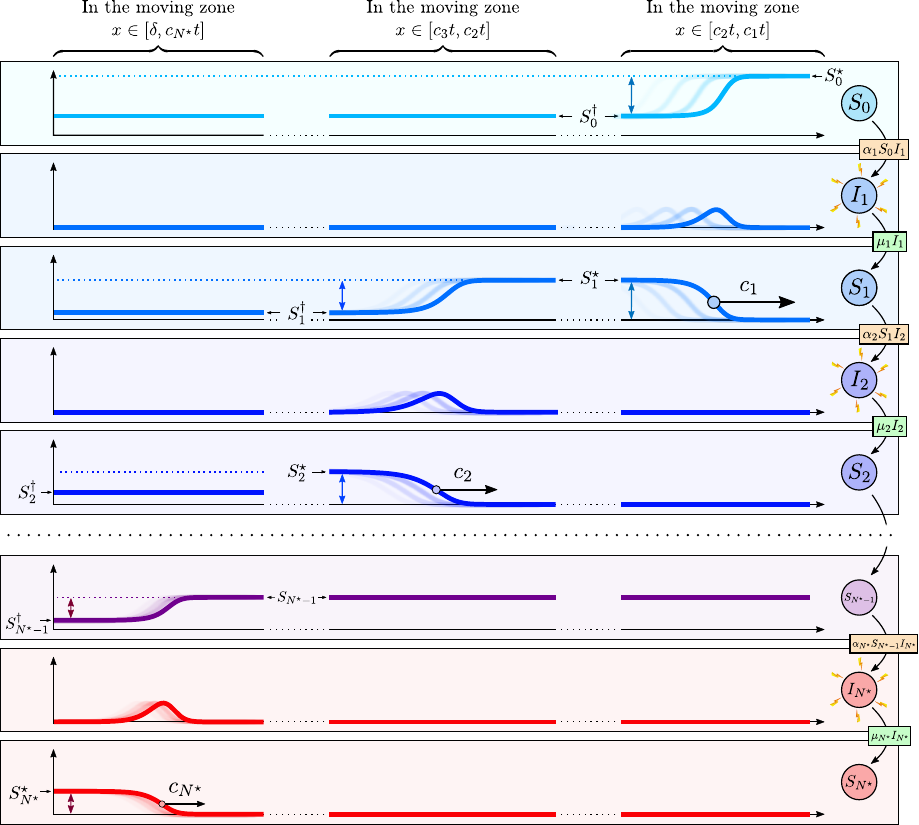}\\[2mm]
\begin{minipage}[c]{145mm}
\begin{center}
\begin{footnotesize}
\textsl{\textbf{Figure \theFIGURE} ---
	\OK{Typical propagation dynamics generated by system \eqref{EQ_MODEL}.}
}
\end{footnotesize}
\end{center}
\end{minipage}
\end{center}

The above heuristics lead us to introduce the following notations.

\begin{definition}[Propagation sequences]
\label{DEF_sequences}
Let $\Sa{0}$ and $\Prth{d_{n}, \alpha_{n}, \mu_{n}}_{n\in\mathbb{N}^{\star}}$ be fixed and strictly positive.\\[-4.5mm]

We define $\Nlast \in \mathbb{N}\cup\{+\infty\}$ and the {\it propagation sequences}
$\Prth{\Sa{n}}_{n\in\intervalleE{0}{\Nlast}}$,
$\Prth{c_{n}}_{n\in\intervalleE{1}{\Nlast}}$, and
$\prth{S_{n}^\dagger}_{n\in\N}$
as follows.\\[-2.5mm]

For each $n\in\mathbb{N}$ such that $\Sa{n}$ is defined ($n=0$ is given by $S_{0}^{\star}$), consider the basic reproduction number
$$
{\Rb}_{n+1} := \frac{\alpha_{n+1}}{\mu_{n+1}} \Sa{n}.
$$
\begin{itemize}
	\item If ${\Rb}_{n+1} \leq 1$, the sequence terminates and we set $\Nlast:=n$.
	\item If ${\Rb}_{n+1} > 1$, we set $\Sa{n+1}$ to be the the unique positive solution to
\begin{equation}\label{EQ_def_S_star}
\Sa{n} \Prth{1 - e^{-\frac{\alpha_{n+1}}{\mu_{n+1}} \Sa{n+1}}} = \Sa{n+1}.
\end{equation}
If the sequence $(S^\star_{n})_{n}$ is not finite, we set $\Nlast = +\infty$.
In addition, we define
$$
c_{n} : = 
2\sqrt{d_{n}\prth{\alpha_{n} \Sa{n-1} - \mu_{n}}}, \qquad \text{for}\ n\in \intervalleE{1}{\Nlast},
$$
and
$$
S_{n}^\dagger : =
\begin{cases}
\Sa{n} - \Sa{n+1} &\text{if }n\in \intervalleE{0}{N^{\star}-1},\\
\Sa{n} &\text{if }n = N^{\star},\\
0 &\text{if }n > N^{\star}.\\
\end{cases}
$$
\end{itemize}
\end{definition}

\noindent Using these notations, we can state our main result, which requires, \OK{as explained above}, the following \OK{assumption.}
\begin{assumption}\label{H1}
The sequence
$\left(c_{n}\right)_{n\in \intervalleE{1}{\Nlast}}$
is strictly decreasing.
\end{assumption}

\begin{remark}
	\hyperref[H1]{Assumption \ref*{H1}} is automatically \OK{satisfied} if all the parameters $(d_n,\alpha_n,\mu_n)_{n\in\N}$ are independent of $n$ \OK{--- \ie $(d_n,\alpha_n,\mu_n)=\prth{d,\alpha,\mu}$ for any $n$}. From a modeling \OK{perspective}, this assumption is quite natural as it implies that more complex opinions propagate slower.
\end{remark}
We are now in position to state our main result. \OK{For the sake of readability}, in the next theorem, we define $c_0 = +\infty$ and $c_{N^{\star}+1} = 0$ (when $\Nlast$ is finite).

\begin{theorem}[Long-term behavior of system \eqref{EQ_MODEL}]\label{TH_know_spread_description}
~

\noindent Let $\prth{S_{0}, \prth{I_{n},S_{n}}_{n\in\mathbb{N}^{\star}}}$ be the solution to
\eqref{EQ_MODEL}-\hspace{0.35mm}\eqref{EQ_MODEL_data}
and assume that
$S_{0}^{\star}$ and\break
$\Prth{d_{n}, \alpha_{n}, \mu_{n}}_{n\in\mathbb{N}^{\star}}$
are such that
\hyperref[H1]{Assumption \ref*{H1}} holds.
Then we have\\[1.5mm]
\renewcommand{\gap}{1mm}
\renewcommand{\gappp}{0.5mm}
\hspace*{\gap}
\itembullet\hspace*{\gappp}
Propagation of the $\NN$ first opinions: for all $\e>0$,
\begin{align}
	\sup_{\vert x \vert>(c_n +\e)t}\vert S_{n}(t,x)\vert\underset{t\to+\infty}{\longrightarrow} 0,
	\qquad \qquad&\forall n\in\intervalleE{1}{\NN},
	\tag{i}\label{EQ_th_i}\\[3mm]
	\sup_{(c_{n+1}+\e)t <\vert x \vert < (c_n-\e)t} \vert S_{n}(t,x) - S_{n}^{\star}\vert \underset{t\to+\infty}{\longrightarrow} 0,
	\qquad \qquad&\forall n\in\intervalleE{0}{\NN},
	\tag{ii}\label{EQ_th_ii}\\[3mm]
	\sup_{\delta<\vert x \vert < (c_{n+1}-\e)t}\vert S_{n}(t,x)-S_{n}^{\dagger}\vert \underset{\delta,t\to+\infty}{\longrightarrow}0,
	\qquad \qquad&\forall n\in\intervalleE{0}{\NN-1}.
	\tag{iii}\label{EQ_th_iii}
\end{align}
\hspace*{\gap}
\itembullet\hspace*{\gappp}
Vanishing of any opinion with complexity strictly higher than $\NN$:
\begin{equation}\label{EQ_th_iv}
	\sup_{\delta<\vert x \vert}\sup_{t>0}\vert S_{n}(t,x)\vert \underset{\delta \to +\infty}{\longrightarrow}0,
	\qquad \qquad \forall n>\NN.
	\tag{iv}
\end{equation}
\end{theorem}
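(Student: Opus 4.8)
The plan is to prove the theorem by induction on the opinion index $n$, reducing each layer as far as possible to the scalar KPP analysis recalled for the SIR model in Section 1.3, and to control the inter-layer coupling by comparison (sub- and super-solutions). First I would record the basic properties of the solution: global existence, positivity ($S_n,I_n\ge 0$), and the elementary pointwise bounds coming from the ODE structure in time. Writing $R_n(t,x):=\mu_n\int_0^t I_n(s,x)\,ds$, so that $\partial_t R_n=\mu_n I_n$, the equation \eqref{EQ_MODEL_c} gives $\partial_t(R_n-S_n)=\alpha_{n+1}S_n I_{n+1}\ge 0$ with $(R_n-S_n)|_{t=0}=0$ for $n\ge1$, hence $0\le S_n\le R_n$; and \eqref{EQ_MODEL_a} integrates exactly to $S_0=S_0^\star e^{-\frac{\alpha_1}{\mu_1}R_1}$. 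The crucial structural remark is that the pair $(S_0,I_1)$ is a closed (autonomous) subsystem, identical to Källen's SIR model with susceptible density $S_0^\star$. The base case then follows verbatim from Section 1.3: $R_1$ solves the concave KPP equation with $f_1'(0)=\alpha_1 S_0^\star-\mu_1=\mu_1(\mathcal{R}_1-1)$, so (when $\mathcal{R}_1>1$) it propagates at speed $c_1$ toward its positive equilibrium, which is exactly $S_1^\star$ by the defining relation \eqref{EQ_def_S_star}. This yields \eqref{EQ_th_i} for $n=1$, \eqref{EQ_th_ii} for $n=0$ (with $c_0=+\infty$), and \eqref{EQ_th_iii} for $n=0$, together with the auxiliary facts that $I_1$ is a bump travelling at speed $c_1$ and that $R_1\to S_1^\star$ behind the front.

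For the inductive step I assume the behaviour of layers $0,\dots,n-1$ is known, and in particular that $S_n$ has built up to the plateau value $S_n^\star$ on the growing window $(c_{n+1}+\varepsilon)t<|x|<(c_n-\varepsilon)t$ (the lower layer is what creates $S_n$ through the source $\mu_n I_n$). To pass to layer $n+1$ I treat \eqref{EQ_MODEL_b} for $I_{n+1}$ as a KPP equation with space–time coefficient $\alpha_{n+1}S_n-\mu_{n+1}$. For the upper bound I use $S_n\le S_n^\star$ — which follows from $S_n\le R_n$ together with the a priori bound $R_n\le S_n^\star$ established alongside the induction (ahead of the $I_{n+1}$-front the removal term vanishes, so $S_n$ increases monotonically to its SIR-recovered limit $S_n^\star$, with no overshoot). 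Then $I_{n+1}$ is a subsolution of the linear equation with constant coefficient $\alpha_{n+1}S_n^\star-\mu_{n+1}=\mu_{n+1}(\mathcal{R}_{n+1}-1)$, and a standard exponential supersolution travelling at speed $c_{n+1}+\varepsilon$ forces $I_{n+1}\to 0$ for $|x|>(c_{n+1}+\varepsilon)t$. Hence $S_n$ is not depleted there, confirming the plateau $S_n\to S_n^\star$ of \eqref{EQ_th_ii} and the vanishing $S_{n+1}\to 0$ of \eqref{EQ_th_i} for $n+1$.

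The heart of the argument is the matching lower bound: one must show that $I_{n+1}$, which starts with no food (since $S_n|_{t=0}=0$), does \emph{ignite} once the plateau of $S_n$ has formed, and then invades at speed exactly $c_{n+1}$. Here Assumption~\ref{H1} is indispensable, as $c_{n+1}<c_n$ makes the plateau window of width growing linearly in $t$; on it $S_n\ge S_n^\star-\varepsilon$, so $I_{n+1}$ is a supersolution of the KPP equation with coefficient $\alpha_{n+1}(S_n^\star-\varepsilon)-\mu_{n+1}$. I would construct a compactly supported KPP subsolution, switched on inside the plateau at a large enough time and translated at speed $c_{n+1}-\varepsilon$, to show invasion at least at that speed. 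Once the front speed is pinned from both sides, the local dynamics of $(S_n,I_{n+1})$ behind the front is, up to the asymptotically negligible source $\mu_n I_n$ (the lower bump has already passed), an autonomous SIR with susceptible density $S_n^\star$; the SIR reduction then gives $R_{n+1}\to S_{n+1}^\star$ and $S_n\to S_n^\star e^{-\frac{\alpha_{n+1}}{\mu_{n+1}}S_{n+1}^\star}=S_n^\star-S_{n+1}^\star=S_n^\dagger$ behind $c_{n+1}$, which is \eqref{EQ_th_iii} for $n$ and simultaneously furnishes the plateau hypothesis $S_{n+1}\to S_{n+1}^\star$ needed to continue. This closes the induction for $1\le n\le N^\star$.

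For the termination \eqref{EQ_th_iv}, when $\mathcal{R}_{N^\star+1}\le 1$ the coefficient $\alpha_{N^\star+1}S_{N^\star}-\mu_{N^\star+1}\le\alpha_{N^\star+1}S_{N^\star}^\star-\mu_{N^\star+1}\le 0$ is nonpositive wherever $S_{N^\star}\le S_{N^\star}^\star$, so $I_{N^\star+1}$ is dominated by the heat semigroup acting on its compactly supported datum with a nonpositive killing potential; its time integral is finite and tends to $0$ as $|x|\to\infty$, uniformly in $t$. Since $S_{N^\star+1}$ is fed only by $I_{N^\star+1}$, it stays uniformly small away from the origin, and the same cascade propagates upward to every $n>N^\star$, giving the uniform-in-time bound \eqref{EQ_th_iv}. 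The step I expect to be the main obstacle is the lower bound above: unlike in the SIR case, no layer beyond the first is autonomous, so the system cannot be collapsed into a single KPP equation, and one must prove that the coupling source/sink terms are asymptotically negligible precisely in the moving windows where ignition occurs, igniting the subsolution only after $S_n$ has grown — this is exactly where the strict monotonicity $c_{n+1}<c_n$ guarantees a plateau of growing width in which the next opinion catches on at the right speed. The case $N^\star=+\infty$ requires, in addition, that all these estimates be carried out with constants controlled uniformly across the infinitely many layers.
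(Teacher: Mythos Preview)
Your overall architecture is right and close to the paper's: introduce $R_n=\mu_n\int_0^t I_n$, run the SIR/KPP analysis at level $n=1$, and induct. The paper also does exactly this, but it organises the induction around the $R_n$'s alone (hypothesis $\mathcal{H}_n$: $R_n$ spreads to $S_n^\star$ at speed $c_n$) rather than around the $S_n$-plateaux, and this choice is not cosmetic.

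The genuine gap is in your lower bound step. You want to place a KPP subsolution under $I_{n+1}$ on the region where $S_n\ge S_n^\star-\varepsilon$; but that region is \emph{determined by $I_{n+1}$ itself}, since $\partial_t S_n=-\alpha_{n+1}S_nI_{n+1}+\mu_nI_n$. On the support of your moving subsolution $I_{n+1}$ is large, so $S_n$ is being depleted there, and the inequality $\partial_t I_{n+1}\ge d_{n+1}\Delta I_{n+1}+(\alpha_{n+1}(S_n^\star-\varepsilon)-\mu_{n+1})I_{n+1}$ is not available precisely where you need it. Your formulation of the induction hypothesis (plateau on $(c_{n+1}+\varepsilon)t<|x|<(c_n-\varepsilon)t$) already encodes this circularity, since $c_{n+1}$ is what you are trying to establish. (A minor related point: the claim $R_n\le S_n^\star$ ``with no overshoot'' fails near the support of $I_n^0$; only $R_n\le S_n^\star+\varepsilon_n(x)$ with $\varepsilon_n(x)\to 0$ at infinity is true.)

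The paper breaks the circularity by working with $R_n$, which is monotone in $t$ and \emph{uncoupled from higher layers}. The key device is the two-sided bound $R_{n-1}e^{-\frac{\alpha_n}{\mu_n}R_n}\le S_{n-1}\le R_{n-1}$ (Lemma~\ref{LE_1_control_Sn_with_Rn}), which feeds into a \emph{perturbed} KPP inequality
\[
-\varepsilon_n(x)\ \le\ \partial_t R_n-d_n\Delta R_n-f_n(R_n)\ \le\ \varepsilon_n(x),\qquad \varepsilon_n(x)\xrightarrow[|x|\to\infty]{}0,
\]
where the error is controlled using only $\mathcal{H}_{n-1}$ together with the already-established exponential decay of $R_n$ ahead of $c_n t$. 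Because of the $-\varepsilon_n$ shift, a straight KPP subsolution is not available; the paper compares $R_n$ with a \emph{bistable} equation $f_\eta$ obtained by shifting $f_n$ down by $\eta$, and then lets $\eta\to 0$ so that the bistable speeds converge to $c_n$. This is the step you should replace your ``compactly supported KPP subsolution'' by. Finally, for \eqref{EQ_th_iii} the paper does not invoke a ``local SIR reduction''; it passes to the limit directly in the identity $\partial_t R_{n+1}=d_{n+1}\Delta R_{n+1}+\mu_{n+1}(R_n-S_n)-\mu_{n+1}R_{n+1}+\mu_{n+1}I_{n+1}^0$ along translated sequences, which gives $S_n\to S_n^\star-S_{n+1}^\star=S_n^\dagger$ cleanly.
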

%
\noindent This result \OK{confirms} that the above heuristic \OK{is valid}. Let us state some remarks.
  
\begin{itemize}
	\item $\Nlast \in \mathbb{N} \cup \{+\infty\}$ represents the {\it maximal complexity} of the opinion that can be adopted by the population.

    \item If $N^{\star}=0$, it means that no opinion propagates. In this case, the sequence $\prth{\Sa{n}}_n$ consists of only one element, the sequences $\prth{S_{n}^{\dagger}}_{n}$ is empty, and only the lines \eqref{EQ_th_ii} (with $c_0 = +\infty$ and $c_1 = 0$) and \eqref{EQ_th_iv} need to be considered.

	\item \OK{The propagation of the populations $S_{n}$ varies depending on whether $n=0$, $n \in \llbracket2, \Nlast - 1\rrbracket$, or $n=\Nlast$. When $n=0$, the population $S_{0}$ transitions from a non-zero value $S_{0}^{\star}$ to another value $S_{0}^{\dagger}$. For $n \in \intervalleE{2}{ \Nlast - 1}$, $S_{n}$ connects $0$ to some state $S_{n}^{\star}$, and then to another state $S_{n}^{\dagger}$. Finally, for $n=\Nlast$, the population $S_{\Nlast}$ connects $0$ to some $S_{\Nlast}^{\star}$.}

    \item  We set $c_{\Nlast+1}=0$ \OK{to ensure} that equation \eqref{EQ_th_ii} \OK{remains valid}. \OK{However, this can be refined, as} we will actually prove:
 	$$
		\sup_{\delta <\vert x \vert < (c_n-\e)t} \vert S_{\Nlast}(t,x) - S_{\Nlast}^{\star}\vert \underset{\delta,t\to+\infty}{\longrightarrow} 0.
	$$

    \item A consequence of the theorem is that $\lim_{t\to+\infty} S_n(t,x) \approx S_n^\dagger$, at least for large $\vert x \vert$. This means that $S_n^\dagger$ represents the number of individuals who eventually settle for the opinion $n$.
    
\end{itemize}

\vspace{-2mm}

An important interest of models such as the one considered here is that one can hope to obtain {\it qualitative properties}. In this paper, we are particularly interested in understanding the relationship between the maximal complexity, the size of the population and the strength of the interactions. To do so, we study how the different parameters of the model influence $\Nlast$, the maximal complexity\footnote{For clarity, when needed, we emphasize the dependence of $\Nlast$ with respect to the parameters by writing it as a function of $S_{0}^{\star}$, $(d_n)_{n\in\N^\star}$, $(\alpha_n)_{n\in\N^\star}$ and $(\mu_n)_{n\in\N^\star}$.}.

\vspace{-2mm}

\begin{theorem}[Qualitative properties of the maximal complexity $\NN$]\label{th_quali}
Let $\Nlast \in \N\cup\{+\infty\}$ be as defined in \hyperref[DEF_sequences]{Definition \ref*{DEF_sequences}}.
\begin{itemize}
	\item[$\boxed{1}$] {\bf Monotony of $\Nlast$.} Let us take two set of parameters $S_{0}^{\star}, (d_n)_{n\in\N^\star},(\alpha_n)_{n\in\N^\star},(\mu_n)_{n\in\N^\star}$ and $\bara{S_{0}^{\star}}, (\bara{d_n})_{n\in\N^\star},(\bara{\alpha_n})_{n\in\N^\star},(\bara{\mu_n})_{n\in\N^\star}$ such that the hypothesis \eqref{H1} is verified for each.
	
	\medskip

	If $S_0^\star \leq \bara{S_0^\star}$, $\alpha_n \leq \bara{\alpha_n}$ and $\mu_n \geq \bara{\mu_n}$ for all $n\in \N^\star$, then
	$$
	\Nlast(S_0^\star,\alpha_i,\mu_i) \leq \Nlast(\bara{S_0^\star},\bara{\alpha_i},\bara{\mu_i}).
	$$

    \item[$\boxed{2}$] {\bf Possibility to reach infinite complexity.} There are values of the parameters $(d_n)_{n\in\N^\star}, (\alpha_n)_{n\in\N^\star}, (\mu_n)_{n\in\N^\star}$ for which $\Nlast = +\infty$. Moreover, for any $\varepsilon > 0$, the parameters can be chosen such that 
    $$
    \lim_{n\to +\infty} S_n^\star \geq S_0^\star - \e.
    $$
    \item[$\boxed{3}$] {\bf Asymptotic expression of $\Nlast$.} If the parameters are independent of $n$, meaning there exist constants $d, \alpha, \mu > 0$ such that for all $n \in \N$, we have $d_n = d$, $\alpha_n = \alpha$, and $\mu_n = \mu$, then for large initial populations $S_0^\star$, the following asymptotic equivalent holds:
		\begin{equation}\label{EQ_maximal_complexity}
			\OK{\Nlast(S_0^\star)
			\;
			\underset{{S_{0}^{\star} \to +\infty}}{\scalebox{1.8}{$\displaystyle \sim $}}
			\;
			\frac{e^{\frac{\alpha}{\mu} S_0^\star}}{ \frac{\alpha}{\mu} S_0^\star}.
			}
		\end{equation}
\end{itemize}
\end{theorem}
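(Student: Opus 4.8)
The strategy is to set the PDE aside and analyse directly the scalar recursion of \hyperref[DEF_sequences]{Definition \ref*{DEF_sequences}}. Write $\beta_{n+1} := \alpha_{n+1}/\mu_{n+1}$, so that $\Rb_{n+1} = \beta_{n+1}\Sa{n}$, and let $\Phi(S,\beta)$ denote the unique positive root of $x = S\prth{1 - e^{-\beta x}}$, which exists exactly when $\beta S > 1$; thus $\Sa{n+1} = \Phi\prth{\Sa{n},\beta_{n+1}}$. The one preliminary fact I would record is that $\Phi$ is strictly increasing in each argument. Indeed $x \mapsto S\prth{1 - e^{-\beta x}}$ is concave, vanishes at $0$ with slope $\beta S > 1$, hence crosses the diagonal exactly once, at $x^\star = \Phi(S,\beta)$, where its slope is $< 1$; applying the implicit function theorem to $F(x,S,\beta) = S\prth{1 - e^{-\beta x}} - x$ at $x^\star$ gives $\partial_x F < 0$, $\partial_S F = 1 - e^{-\beta x^\star} > 0$ and $\partial_\beta F = S x^\star e^{-\beta x^\star} > 0$, so $\partial_S\Phi > 0$ and $\partial_\beta\Phi > 0$. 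Part $\boxed{1}$ is then an induction, with base case $\Sa{0} = S_0^\star \le \bara{S_0^\star} = \bara{\Sa{0}}$: from $\alpha_n \le \bara{\alpha_n}$ and $\mu_n \ge \bara{\mu_n}$ we get $\beta_n \le \bara{\beta_n}$, and if $\Sa{n} \le \bara{\Sa{n}}$ with $n < \Nlast$ (so $\Rb_{n+1} > 1$), then $\bara{\beta_{n+1}}\,\bara{\Sa{n}} \ge \beta_{n+1}\Sa{n} > 1$, so the barred sequence also survives step $n$, and monotonicity gives $\Sa{n+1} = \Phi\prth{\Sa{n},\beta_{n+1}} \le \Phi\prth{\bara{\Sa{n}},\bara{\beta_{n+1}}} = \bara{\Sa{n+1}}$. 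Hence $\Nlast \le \bara{\Nlast}$.

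For Part $\boxed{2}$, observe that $1 - e^{-\beta_{n+1}\Sa{n+1}} < 1$ makes $\prth{\Sa{n}}_n$ strictly decreasing, hence convergent whenever the sequence is infinite. Fix $\e \in \prth{0,\Sa{0}}$. I construct $\prth{\beta_{n+1}}_n$ inductively to keep the sequence alive with summable decrements: given $\Sa{n} > \Sa{0} - \e/2 > 0$, I choose $\beta_{n+1}$ so large that both $\beta_{n+1}\Sa{n} > 1$ and $\Sa{n+1} > \Sa{n} - \e\,2^{-(n+2)}$, the latter being possible because $\Phi\prth{\Sa{n},\beta} \uparrow \Sa{n}$ as $\beta \to +\infty$ (the root of $x = \Sa{n}(1 - e^{-\beta x})$ tends to $\Sa{n}$). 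Telescoping the decrements keeps $\Sa{n} > \Sa{0} - \e/2$ for every $n$, so the recursion never terminates, i.e. $\Nlast = +\infty$, while $\lim_n \Sa{n} \ge \Sa{0} - \e$. The diffusivities $d_n$ do not enter $\Nlast$ and can be taken decreasing fast enough that \hyperref[H1]{Assumption \ref*{H1}} holds as well.

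Part $\boxed{3}$ carries the real content. With $d_n \equiv d$, $\alpha_n \equiv \alpha$, $\mu_n \equiv \mu$, set $\beta = \alpha/\mu$ and the dimensionless variable $u_n := \beta\Sa{n}$, so $u_0 = \beta\Sa{0} \to +\infty$ and the recursion becomes $u_{n+1} = u_n\prth{1 - e^{-u_{n+1}}}$, that is
$$ u_n - u_{n+1} = \psi(u_{n+1}), \qquad \psi(u) := \frac{u}{e^u - 1}. $$
Termination happens at the first index with $u_n \le 1$, whence $\Nlast = \#\acco{n : u_n > 1}$ (finite here, since $\prth{u_n}$ is decreasing but cannot converge, as $\psi$ has no positive zero). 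The idea is to compare this count with $H(u) := \int_1^u \frac{e^v - 1}{v}\,dv$, a primitive of $1/\psi$. Since $u_n - u_{n+1}$ equals \emph{exactly} $\psi(u_{n+1})$, one has $H(u_n) - H(u_{n+1}) = \int_{u_{n+1}}^{u_n}\frac{dv}{\psi(v)} = 1 + r_n$ with $r_n \ge 0$, and the mean value theorem bounds $r_n$ by $\tfrac12\prth{u_n - u_{n+1}}^2 \sup_{[u_{n+1},u_n]}\prth{1/\psi}'$; using that $\prth{1/\psi}'$ is of order $e^u/u$ and that $u e^{-u} < \psi(u)$, this yields $r_n \le C\prth{u_n - u_{n+1}}$ with an absolute constant valid for every $u_{n+1}\ge 1$. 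Telescoping over $n = 0,\dots,\Nlast - 1$ gives
$$ \Nlast = H(u_0) - H\prth{u_{\Nlast}} - \sum_{n=0}^{\Nlast - 1} r_n = H(u_0) + O(u_0), $$
since $H\prth{u_{\Nlast}} = O(1)$ (because $u_{\Nlast} \in (0,1]$) and $\sum_n r_n \le C\sum_n\prth{u_n - u_{n+1}} + O(1) \le C\,u_0 + O(1)$. Finally $H(u_0) \sim \operatorname{Ei}(u_0) \sim e^{u_0}/u_0$ dominates the $O(u_0)$ remainder, and substituting $u_0 = \tfrac{\alpha}{\mu}S_0^\star$ reproduces exactly \eqref{EQ_maximal_complexity}.

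The crux of the whole theorem is Part $\boxed{3}$, and within it the verification that the discretisation error $\sum_n r_n$ is genuinely of lower order. The favourable structural fact that makes this work is $u e^{-u} < \psi(u)$: it forces the per-step error $r_n$ to be controlled by the step length $u_n - u_{n+1}$, so the errors telescope to $O(u_0)$ rather than accumulating dangerously. One then only needs to recognise that $H(u_0) = \int_1^{u_0}\frac{e^v-1}{v}\,dv$ is, up to the negligible $\int \frac{dv}{v} = \ln u_0$, the exponential integral $\operatorname{Ei}(u_0)$, whose classical asymptotic $\operatorname{Ei}(u_0) \sim e^{u_0}/u_0$ supplies the stated equivalent. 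The one point demanding a little care is the behaviour near the threshold $u \approx 1$, where decrements are of order one and the Riemann comparison is coarse; but since $\psi$ is decreasing this region is crossed in only $O(1)$ steps, contributing an $O(1)$ correction that is swallowed by the leading term.
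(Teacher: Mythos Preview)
Your proof is correct and, for Parts $\boxed{1}$ and $\boxed{2}$, proceeds along the same lines as the paper: monotonicity of the forward map $(S,\beta)\mapsto\Phi(S,\beta)$ (the paper instead uses the inverse map $\varphi_n(x)=x/(1-e^{-\beta_{n+1}x})$, but this is equivalent), and a choice of rapidly growing $\beta_n$ to keep the decrements summable.

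For Part $\boxed{3}$ the underlying idea is the same as the paper's---both recognize that the discrete recursion should be compared to the primitive $\int\frac{e^v-1}{v}\,dv$---but the execution differs. You work with the \emph{forward} (decreasing) sequence $u_n=\beta\Sa{n}$, write $H(u_n)-H(u_{n+1})=1+r_n$ exactly, bound each $r_n\le C(u_n-u_{n+1})$, and telescope to get $\Nlast=H(u_0)+O(u_0)$. The paper instead runs the \emph{inverse} (increasing) iteration $v_0=1/\beta$, $v_{k+1}=\varphi(v_k)$, observes that $S_0^\star$ is sandwiched between $v_{\Nlast-1}$ and $v_{\Nlast}$, shows $\Phi(v_{k+1})-\Phi(v_k)\to 1$ via Taylor, and finishes with Ces\`aro. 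Your route is slightly more quantitative (it yields an explicit $O(u_0)$ remainder), while the paper's Ces\`aro trick is a bit slicker and avoids summing errors; both are of comparable length and difficulty. Your treatment of the threshold region $u\approx 1$ (only $O(1)$ steps, hence $O(1)$ contribution) correctly handles the one place where the per-step bound $r_n\le C(u_n-u_{n+1})$ could fail.
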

%
Let us make some remarks on these points. The first one tells us that the quantity $\Nlast$ is a non-decreasing function of the initial population $S_0^\star$ and of the transmission parameters $(\alpha_n)_{n\in\N^\star}$ and a non-increasing function of the recovery parameters $(\mu_n)_{n\in\N}$. This is somewhat natural: the larger the $\alpha_n$, the easier it is for individuals to pass their opinions, the smaller the $\mu_n$, the longer they transmit their opinion. The fact that $\Nlast$ is increasing with respect to the size of the population is also natural: larger populations should indeed establish more connections and would be capable of sharing more complex opinions (the dependence on the size of the population is made more explicit in the third point).

\medskip

The second point indicates that the population can indeed reach opinions with arbitrarily high complexity, and it is even possible for {\it almost all} the initial population to reach arbitrarily large opinions. However, as we shall see in the proof, this requires the coefficients $(\alpha_n)_{n\in\N^\star},(\mu_n)_{n\in\N^\star}$ to be chosen carefully.

\medskip

Although the maximal complexity $\Nlast$ is an implicit function of the parameters, the third point tells us that, when the coefficients do not depend on $n$, $\Nlast$ increases almost exponentially with the size of the initial population. 

This third point can also be expressed using the basic reproduction number $\mathcal{R}_0 = \frac{\alpha S_0^\star}{\mu}$:
\vspace{-2mm}
$$
\Nlast(\mathcal{R}_0)
\;
\underset{{\mathcal{R}_0 \to +\infty}}{\scalebox{1.8}{$\displaystyle \sim $}}
\;
\frac{e^{\mathcal{R}_0}}{\mathcal{R}_0}.
$$
From the modeling point of view, the rapid growth in complexity predicted by the third point seems rather natural. One can indeed expect that large populations should create more interpersonal connections, leading to more interactions, and this should result in higher complexity and diversity of opinions. We refer to \cite{LahireStructures23} for related discussions.

\medskip

To illustrate how the initial density $S_0^\star$ influences the dynamics of the system, let us plot the graph of the function
\vspace{-1mm}
$$
(S_0^\star,n) \in \R_{+} \times \N \mapsto \frac{S_n^\dagger}{S_0^\star}.
$$
This quantity represents the asymptotic proportion of individuals holding opinion $n$, relative to the initial population size $S_{0}^{\star}$, or, to state it differently, is represents the proportion of individuals who eventually adopt the $n$-th opinion\footnote{See the last point below \hyperref[TH_know_spread_description]{Theorem \ref*{TH_know_spread_description}}.}. Because the total number of individuals is conserved, we have $\sum_n \frac{S_n^\dagger}{S_0^\star} =1$. 

If $n\leq \Nlast$, the quantity $\frac{S_n^\dagger}{S_0^\star}$ is strictly positive, and for $n>\Nlast$, this quantity is zero (no individuals adopt these opinions).

In the following graph, the intensity of shaded areas indicates the prevalence of opinion $n$ within the population in long time --- darker shades represent higher proportions.

\refstepcounter{FIGURE}\label{FIG_simu_N_max}
\begin{center}
\includegraphics[scale=0.91]{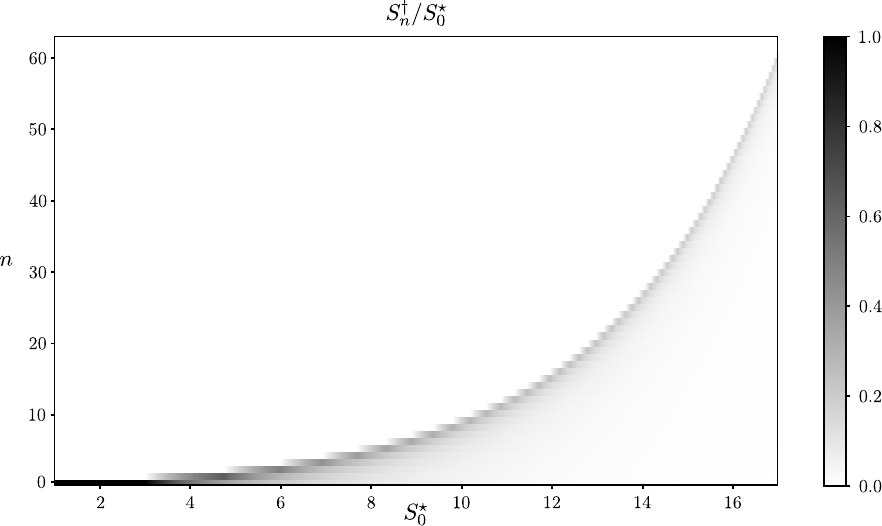}\\[1mm]
\begin{minipage}[c]{145mm}
\begin{center}
\begin{footnotesize}
\textsl{\textbf{Figure \theFIGURE} ---
	Asymptotic proportion ${S_{n}^{\dagger}}/{S_0^{\star}}$ of individuals holding opinion $n$.
}
\end{footnotesize}
\end{center}
\end{minipage}
\end{center}

We observe that the maximal opinion complexity $N^{\star}=N^{\star}\prth{S_{0}^{\star}}$ corresponds to the highest point of non-zero proportion along each column (for $n\leq \Nlast$, the area is gray, while for $n > \Nlast$ the zone is blank). Tracking the curve formed by these maximal points reveals an almost exponential shape, consistent with the predicted behavior of $N^{\star}$ for large $S_{0}^{\star}$, as described in the final point of \hyperref[th_quali]{Theorem \ref*{th_quali}} --- see \eqref{EQ_maximal_complexity}.

\section{\texorpdfstring{Proof of \hyperref[TH_know_spread_description]{Theorem \ref*{TH_know_spread_description}}}{Proof of Theorem \ref*{TH_know_spread_description}}}

The goal of this section is to prove \hyperref[TH_know_spread_description]{Theorem \ref*{TH_know_spread_description}}. As explained in \hyperref[SS_{1}_2_rap_sir]{Section \ref*{SS_{1}_2_rap_sir}}, the classical SIR system can be rewritten as a single scalar reaction-diffusion equation for the recovered. Building on this intuition, we introduce the functions $R_n$, $n\in \N^\star$, defined by
\renewcommand{\gap}{15mm}
\renewcommand{\gapp}{4mm}
\vspace{4mm}
\begin{system}{-4.5mm}{-9.08mm}{5mm}{llll}{\label{EQ_RN}}
	\partial_{t}R_{n} = \mu_{n}I_{n}, & \hspace{\gap}  & t>0, \hspace{\gapp}  &
	x\in\mathbb{R},\label{EQ_RN_a}\\[1mm]
	R_{n}|_{t=0} \equiv 0, & & & x\in\mathbb{R},\label{EQ_RN_b}
\end{system}
~\\[-5mm]
and we will study their spreading properties first instead of studying the functions $S_{n}$ and $I_{n}$. Unlike for the classical SIR system, the functions $R_n$ will not satisfy a scalar reaction-diffusion equations, nor a \glmt{simple} reaction-diffusion system: they will be be coupled, in a somewhat implicit fashion. The core of the proof will be to control the influence of each $R_n$ on the others.

\refstepcounter{FIGURE}\label{FIG_Sn_vs_Rn_schema}
\begin{center}
\includegraphics[scale=1]{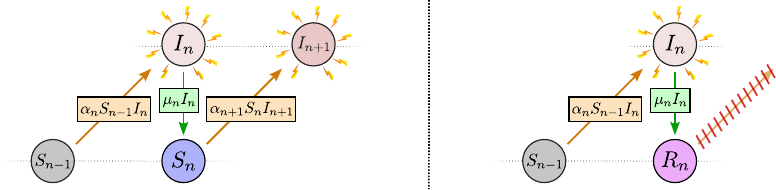}\\[0mm]
\begin{minipage}[c]{145mm}
\begin{footnotesize}
\begin{center}
\textsl{\textbf{Figure \theFIGURE} ---
	$S_{n}$ vs. $R_{n}$. 
}
\end{center}
\end{footnotesize}
\end{minipage}
\end{center}

Our key result concerning the functions $R_n$ is the following.
\begin{proposition}[Propagation of $R_{n}$]
\label{PROP_Rn_propagation}
Assume that the hypotheses of \hyperref[TH_know_spread_description]{Theorem \ref*{TH_know_spread_description}} hold true. Then, for every $n\in \intervalleE{1}{\Nlast}$, we have
\begin{equation}\label{EQ_propo_j}
\sup_{ c t<\vert x\vert} \vert R_n(t,x)\vert \underset{t\to+\infty}{\longrightarrow}0, \qquad \qquad \forall c > c_n, 
\tag{j}
\end{equation}
and
\begin{equation}\label{EQ_propo_jj}
\sup_{\delta<\vert x\vert< c t} \vert R_n(t,x) - S_{n}^{\star}\vert \underset{\delta, t\to+\infty}{\longrightarrow}0, \hspace{7mm}\forall c \in (0,c_n).
\tag{jj}
\end{equation}
\end{proposition}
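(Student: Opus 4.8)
The plan is to prove \eqref{EQ_propo_j}--\eqref{EQ_propo_jj} by induction on $n\in\intervalleE{1}{\Nlast}$, reducing each $R_n$ to a quantity squeezed between two genuine KPP fronts. First I would record the algebraic relations tying the $R_n$ together. Integrating \eqref{EQ_MODEL_a} gives $S_0=\Sa{0}\,e^{-\frac{\alpha_1}{\mu_1}R_1}$, and for $n\ge 2$ equation \eqref{EQ_MODEL_c} reads $\partial_t S_{n-1}=\partial_t R_{n-1}-\frac{\alpha_n}{\mu_n}S_{n-1}\,\partial_t R_n$, which, since $S_{n-1}|_{t=0}=0$, integrates with the factor $e^{\frac{\alpha_n}{\mu_n}R_n}$ to
\[
S_{n-1}(t,x)=e^{-\frac{\alpha_n}{\mu_n}R_n(t,x)}\int_0^t e^{\frac{\alpha_n}{\mu_n}R_n(s,x)}\,\partial_s R_{n-1}(s,x)\,ds .
\]
Feeding $\mu_n I_n=\partial_t R_n$ into \eqref{EQ_MODEL_b}, multiplying by $\mu_n$ and integrating in time exactly as in the derivation of \eqref{ex kpp}, together with the identity $\alpha_n\int_0^t S_{n-1}\,\partial_s R_n\,ds=\mu_n(R_{n-1}-S_{n-1})$ (itself a by-product of the $S_{n-1}$ equation), yields the reduced equation
\[
\partial_t R_n=d_n\Delta R_n+\mu_n\big(R_{n-1}-S_{n-1}\big)-\mu_n R_n+\mu_n I_n^0 .
\]
For $n=1$ the source $\mu_1(R_0-S_0)$ is replaced by $\mu_1(\Sa{0}-S_0)=\mu_1\Sa{0}(1-e^{-\frac{\alpha_1}{\mu_1}R_1})$, recovering the scalar KPP equation \eqref{ex kpp}; the classical results of \cite{KolmogorovStudy37,AronsonMultidimensional78} then give \eqref{EQ_propo_j}--\eqref{EQ_propo_jj} for $R_1$ with speed $c_1$ and plateau $\Sa{1}$ (the positive zero of the reaction, i.e. the solution of \eqref{EQ_def_S_star}). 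This is the base case.

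For the induction step, assume the statement for $R_{n-1}$. Since $\partial_t R_n\ge0$, bounding the exponential in the integrand of the displayed formula by its value at $s=t$, respectively by $1$, gives at once $R_{n-1}\,e^{-\frac{\alpha_n}{\mu_n}R_n}\le S_{n-1}\le R_{n-1}$. In particular $S_{n-1}\le R_{n-1}\le \Sa{n-1}+o(1)$ uniformly for $|x|\ge\delta$ (this sharp uniform upper bound being part of what the induction delivers), so the equation $\partial_t I_n=d_n\Delta I_n+(\alpha_n S_{n-1}-\mu_n)I_n$ makes $I_n$ a subsolution of the linear operator with growth rate $\alpha_n\Sa{n-1}-\mu_n$, up to an $o(1)$ enlargement harmless for any fixed $c>c_n$. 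Comparing with the exponential supersolutions $e^{-\lambda(|x|-cs)}$, admissible for every $c>c_n$ and a suitable $\lambda=\lambda(c)$, gives $I_n(s,x)\le Ce^{-\lambda(|x|-cs)}$; integrating this pointwise bound over $s\in[0,t]$ and using $R_n=\mu_n\int_0^t I_n$ yields $R_n(t,x)\le C'e^{-\lambda(|x|-ct)}$, hence \eqref{EQ_propo_j}.

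With \eqref{EQ_propo_j} in hand for $R_n$, I would next certify a reservoir of fresh substrate at the leading edge. Since $c_n<c_{n-1}$ by \hyperref[H1]{Assumption~\ref*{H1}}, the annulus $(c_n+\e)t<|x|<(c_{n-1}-\e)t$ is nonempty and widening; there $R_{n-1}\to\Sa{n-1}$ by induction while $R_n(s,x)\approx0$ for all $s\le t$ (by \eqref{EQ_propo_j} and monotonicity), so the formula for $S_{n-1}$ forces $S_{n-1}\ge\Sa{n-1}-\eta$ on that annulus for $t$ large. Riding a compactly supported KPP subsolution on this reservoir—built from the principal eigenfunction of $d_n\partial_{xx}+(\alpha_n(\Sa{n-1}-\eta)-\mu_n)$ on a large interval, as in \cite{AronsonMultidimensional78}—shows that $I_n$, ignited by $I_n^0$, invades at speed at least $2\sqrt{d_n(\alpha_n(\Sa{n-1}-\eta)-\mu_n)}$, which tends to $c_n$ as $\eta\to0$. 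Hence $R_n$ is bounded below by a positive constant on $|x|<(c_n-\e)t$, giving the spreading half of \eqref{EQ_propo_jj}.

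It remains to pin the plateau to the exact value $\Sa{n}$, and this is the main obstacle. The lower bound $S_{n-1}\ge R_{n-1}e^{-\frac{\alpha_n}{\mu_n}R_n}$ turns the reduced equation into the supersolution inequality $\partial_t R_n\le d_n\Delta R_n+\mu_n R_{n-1}(1-e^{-\frac{\alpha_n}{\mu_n}R_n})-\mu_n R_n+\mu_n I_n^0$; since $R_{n-1}\le\Sa{n-1}+o(1)$, comparison with the KPP equation of reaction $z\mapsto\mu_n\Sa{n-1}(1-e^{-\frac{\alpha_n}{\mu_n}z})-\mu_n z$, whose positive zero is $\Sa{n}$ by \eqref{EQ_def_S_star}, forces $\limsup R_n\le\Sa{n}$ in the bulk. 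For the matching lower bound one needs instead $S_{n-1}\le\Sa{n-1}e^{-\frac{\alpha_n}{\mu_n}R_n}+o(1)$, i.e. that the integral defining $S_{n-1}$ has essentially accumulated all its mass $\Sa{n-1}$ before $R_n$ grows. This is exactly where the strict ordering $c_n<c_{n-1}$ is used: splitting $\int_0^t=\int_0^{t_0}+\int_{t_0}^t$ at $t_0\sim|x|/(c_n+\e)$, the window $|x|/(c_{n-1}-\e)<s<t_0$ (nonempty precisely because $c_{n-1}>c_n$) lets $R_{n-1}(\cdot,x)$ saturate at $\Sa{n-1}$ while $R_n(\cdot,x)$ is still negligible, so the first integral is $\approx\Sa{n-1}$ and the second is of order $R_{n-1}(t)-R_{n-1}(t_0)=o(1)$. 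The resulting lower bound $R_{n-1}-S_{n-1}\ge\Sa{n-1}(1-e^{-\frac{\alpha_n}{\mu_n}R_n})-o(1)$ on the source feeds a KPP subsolution with plateau $\Sa{n}$, whence $\liminf R_n\ge\Sa{n}$ and the limit in \eqref{EQ_propo_jj}. Making these time-integral splittings and their error terms uniform on the moving windows, while simultaneously carrying the induction hypothesis for $R_{n-1}$ with explicit rates, is the delicate technical heart of the argument.
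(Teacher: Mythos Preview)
Your proposal follows essentially the same inductive route as the paper: the base case via the scalar KPP equation for $R_1$; exponential supersolutions for \eqref{EQ_propo_j}; the sandwich $R_{n-1}e^{-\frac{\alpha_n}{\mu_n}R_n}\le S_{n-1}\le R_{n-1}$; and a time-splitting of $\int_0^t S_{n-1}\,\partial_s R_n\,ds$ at a scale $t_0\sim|x|/\tilde c$ with $\tilde c\in(c_n,c_{n-1})$ to obtain the perturbed inequality $\partial_t R_n\ge d_n\Delta R_n+f_n(R_n)-\e_n(x)$ with $\e_n(x)\to0$ as $|x|\to\infty$. The paper's version of the splitting (Lemma~\ref{LE_3_Rn_sp_s_KPP_perturbed}) is organized slightly differently from yours: instead of estimating the tail integral by $R_{n-1}(t)-R_{n-1}(t_0)$, it uses $S_{n-1}\ge R_{n-1}e^{-\frac{\alpha_n}{\mu_n}R_n}$ and then integrates $-\partial_s\bigl(e^{-\frac{\alpha_n}{\mu_n}R_n}\bigr)$ exactly over $[t_0,t]$, which packages the two error terms neatly as functions of $x$ alone.

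The one substantive divergence is how you extract \eqref{EQ_propo_jj} from the perturbed inequality. Once you subtract the $x$-dependent error, the effective reaction $z\mapsto f_n(z)-\eta$ on the half-line $\{x>R\}$ is \emph{bistable}, not KPP, so your phrase ``feeds a KPP subsolution with plateau $\Sa{n}$'' is not quite right: a bistable comparison needs initial data above the unstable threshold on a sufficiently long interval before it spreads. The paper supplies this through an intermediate step (Lemma~\ref{LE_5}): it first proves, by an elliptic-limit plus eigenfunction-contradiction argument, that the pointwise limit $R_n^\infty(x)$ tends to $\Sa{n}$ as $|x|\to\infty$ (convergence without speed), and only then compares $R_n+\eta$ on $\{x>R\}$ with the solution of the explicit bistable equation $\partial_t v=d_n\Delta v+f_\eta(v)$, whose spreading speed $c_\eta\to c_n$ as $\eta\to0$ (Propositions~\ref{prop_bistable}--\ref{prop_cv_speed}). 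Your annulus-eigenfunction argument is close in spirit to the contradiction step inside Lemma~\ref{LE_5}, but as written it yields only a positive lower bound for $I_n$, not $R_n$ close to $\Sa{n}$ on a long interval; you should either upgrade it to that conclusion or make the bistable nature of the final comparison explicit.
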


This proposition tells us that the function $R_n$ spreads toward $S_{n}^{\star}$ with speed $c_n$. The propagation of the function $R_n$ is similar to the propagation we want to prove on the functions $S_{n}$, except it does not have the decay toward $S_{n}^\dagger$ at the back of the front --- see \hyperref[FIG_Sn_vs_Rn]{Figure \ref*{FIG_Sn_vs_Rn}}. For this reason, it will be more convenient to work with the functions $R_n$ rather than $S_{n}$.

\refstepcounter{FIGURE}\label{FIG_Sn_vs_Rn}
\begin{center}
\includegraphics[scale=1]{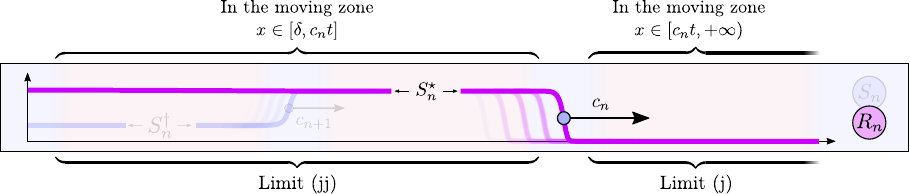}\\[1mm]
\begin{minipage}[c]{145mm}
\begin{footnotesize}
\begin{center}
\textsl{\textbf{Figure \theFIGURE} ---
	 $R_{n}$ behave like $S_{n}$ but without the decay toward $S_{n}^\dagger$ at the back of the front.
}
\end{center}
\end{footnotesize}
\end{minipage}
\end{center}

This section is organized as follows. In \hyperref[sec Rn]{Section \ref*{sec Rn}}, we give some basic estimates on the functions $R_n$. In \hyperref[SS_Rn_perturbed_KPP]{Section \ref*{SS_Rn_perturbed_KPP}} we show that each $R_n$ satisfies a reaction-diffusion equation up to some perturbation term. Then, in \hyperref[SS_Propa_Rn]{Section \ref*{SS_Propa_Rn}} we prove \hyperref[PROP_Rn_propagation]{Proposition \ref*{PROP_Rn_propagation}}. Finally, in \hyperref[SS_proof_of_th]{Section \ref*{SS_proof_of_th}}, we show how \hyperref[PROP_Rn_propagation]{Proposition \ref*{PROP_Rn_propagation}} implies \hyperref[TH_know_spread_description]{Theorem \ref*{TH_know_spread_description}}.

\subsection{Basic results on the auxiliary functions \texorpdfstring{$R_{n}$}{Rn}}\label{sec Rn}

We start this section with a remark concerning the function $R_1$.
\begin{remark}\label{n=1}
    The computations presented in the introduction, at the end of \hyperref[SS_{1}_2_rap_sir]{Section \ref*{SS_{1}_2_rap_sir}}, yield that $R_1$ satisfies the same reaction-diffusion equation than in the case of the classical SIR system (the functions $S_{0}$, $I_{1}$, $R_1$ actually form a SIR system), that is, we have
    $$
    \partial_{t} R_1 = d_{1} \Delta R_1 + f_1(R_1)+\mu_{1} I_{1}^{0}, \qquad t>0, \, x \in \R.
    $$
    Therefore, \hyperref[PROP_Rn_propagation]{Proposition \ref*{PROP_Rn_propagation}} holds true for $n=1$. Our proof of \hyperref[PROP_Rn_propagation]{Proposition \ref*{PROP_Rn_propagation}} will be done by induction and we shall use this as our base case.
\end{remark}

\OK{The next lemma explains the relationship between the functions $R_{n}$ and $S_{n}$.}
\begin{lemmaNOBREAK}[Controlling $S_{n}$ with $R_{n}$]\label{LE_1_control_Sn_with_Rn}
For all $t>0$ and $x\in\mathbb{R}$, we have
\begin{align}
S_{0}^{\star} \, e^{-\frac{\alpha_{1}}{\mu_{1}}R_{1}(t,x)} &=
S_{0}(t,x) \leq S_{0}^{\star}, &
\label{EQ_le1_control_S0} \\[2mm]
R_{n}(t,x) \, e^{-\frac{\alpha_{n+1}}{\mu_{n+1}}R_{n+1}(t,x)} &\leq
S_{n}(t,x) \leq R_{n}(t,x), &
\hspace{-12mm}\forall n \in \mathbb{N}^{\star}. \label{EQ_le1_control_Sn}
\end{align}
\end{lemmaNOBREAK}

\begin{proof}[\texorpdfstring{Proof of \hyperref[LE_1_control_Sn_with_Rn]{Lemma \ref*{LE_1_control_Sn_with_Rn}}}{Proof of Lemma \ref*{LE_1_control_Sn_with_Rn}}]
We start by establishing \eqref{EQ_le1_control_S0}. Because $S_{0}$ and $I_{1}$ are non-negative, it follows from \eqref{EQ_MODEL_a} that
$$
-\alpha_{1}S_{0}I_{1} = \partial_{t}S_{0} \leq 0, \qquad t>0, \, x \in \R.
$$
Dividing this by $S_{0}$ and using the equation for $\partial_{t}R_{1}$ from \eqref{EQ_RN_a}, we obtain
$$
-\frac{\alpha_{1}}{{\mu}_{1}}\partial_{t}R_{1} = \frac{\partial_{t}S_{0}}{S_{0}} \leq 0.
$$
Now we integrate from $0$ to $t$. Recalling that $R_{1}|_{t=0}\equiv 0$, we get
$$
-\frac{\alpha_{1}}{\mu_{1}}R_{1}\prth{t,x} = \ln\prth{S_{0}\prth{t,x}} - \ln\prth{\Sa{0}} \leq 0,
$$
which directly leads to \eqref{EQ_le1_control_S0}.

\medskip

For $n\in \N^{\star}$, to get the upper bound of $S_{n}$,
we consider equation \eqref{EQ_MODEL_c} where $S_{n}$ and $I_{n+1}$ are both positive.
Using $\eqref{EQ_RN_a}$, this leads to
$$
\partial_{t}S_{n} \leq \mu_{n}I_{n} = \partial_{t}R_{n},
$$
Integrating from $0$ to $t$ and using that
$S_{n}|_{t=0}= R_{n}|_{t=0} \equiv 0$,
we get
\begin{equation}\label{EQ_SN_leq_RN}
S_{n}(t,x) \leq R_{n}(t,x)
\end{equation}
which is the upper bound of $S_{n}$ as specified \eqref{EQ_le1_control_Sn}.

To get the lower bound in \eqref{EQ_le1_control_Sn}, we divide \eqref{EQ_MODEL_c} by $S_{n}$ and use \eqref{EQ_RN_a} to get
$$
\frac{\partial_{t}S_{n}}{S_{n}} =
-\frac{\alpha_{n+1}}{\mu_{n+1}}\partial_{t}R_{n+1} +
\frac{\partial_{t}R_{n}}{S_{n}}, \qquad t>0, \, x \in \R.
$$
Using the upper bound \eqref{EQ_SN_leq_RN} on $S_{n}$ and the positivity of $\partial_{t}R_{n} = \mu_{n}I_{n}$, it follows that
\begin{equation}\label{EQ_SN_sur_RN_presque0}
\frac{\partial_{t}S_{n}}{S_{n}} \geq
-\frac{\alpha_{n+1}}{\mu_{n+1}}\partial_{t}R_{n+1} +
\frac{\partial_{t}R_{n}}{R_{n}}.
\end{equation}
We integrate the inequality \eqref{EQ_SN_sur_RN_presque0} from some small $\eta>0$ to $t$. This results in
$$
S_{n}\prth{t,x} \geq
\frac{S_{n}\prth{\eta,x}}{R_{n}\prth{\eta,x}}
\times
R_{n}\prth{t,x} e^{-\frac{\alpha_{n+1}}{\mu_{n+1}}R_{n+1}\prth{t,x}}.
$$
Let us now prove that
$S_{n}\prth{\eta,x}/R_{n}\prth{\eta,x}$
converges to $1$ for all $x$ as $\eta$ goes to zero. By combining \eqref{EQ_MODEL_c} and \eqref{EQ_RN_a}, we obtain
$$
\partial_{t}S_{n} = \partial_{t}R_{n} - \alpha_{n+1} S_{n} I_{n+1}.
$$
Integrating from $0$ to $\eta$ and using again that
$S_{n}|_{t=0}= R_{n}|_{t=0} \equiv 0$,
we are led to
$$
S_{n}\prth{\eta,x} = R_{n}\prth{\eta,x} - \alpha_{n+1} \int_{0}^{\eta} S_{n}\prth{s,x}I_{n+1}\prth{s,x} ds,
$$
which can be rearranged as
\begin{equation}\label{EQ_SN_sur_RN_presque}
\frac{S_{n}\prth{\eta,x}}{R_{n}\prth{\eta,x}} - 1 =
\frac{\alpha_{n+1}}{R_{n}\prth{\eta,x}}\int_{0}^{\eta} S_{n}\prth{s,x}I_{n+1}\prth{s,x} ds.
\end{equation}
Given the positivity and the upper bound on $S_{n}$
established in \eqref{EQ_SN_leq_RN}, as well as the positivity of
$\partial_{t}R_{n} = \mu_{n}I_{n}$,
we have, for any $s\in\intervalleoo{0}{\eta}$,
$$
0 \leq
S_{n}\prth{s,x} \leq
R_{n}\prth{s,x} \leq
R_{n}\prth{\eta,x}.
$$
As a result, \eqref{EQ_SN_sur_RN_presque} implies that
$$
\verti{\frac{S_{n}\prth{\eta,x}}{R_{n}\prth{\eta,x}} - 1} \leq
\alpha_{n+1}\times\eta \times \sup\limits_{s \in \intervalleoo{0}{\eta}}\verti{I_{n+1}\prth{s,x}}
$$
which vanishes as $\eta$ approaches $0$.
\end{proof}

\medskip

The next lemma shows that $R_n$ satisfies some differential inequality involving $R_{n-1}$. In the \hyperref[SS_Rn_perturbed_KPP]{Section \ref*{SS_Rn_perturbed_KPP}}, we will improve this result and prove that $R_n$ \glmt{almost} satisfies a scalar reaction-diffusion equation.

\begin{lemma}[$R_{n}$ is sub-solution to a perturbed Fisher-KPP equation]
	\label{LE_2_Rn_sb_s_KPP_perturbed}
 For all $n\in \N^{\star}$, for all $t>0$ and $x\in\mathbb{R}$, there holds
	\begin{equation}\label{EQ_Rn_sb_sol_KPP_perturbed}
	\partial_{t}R_{n} \leq
	d_{n}\Delta R_{n} +
	\mu_{n} R_{n-1}\!\Prth{1-e^{-\frac{\alpha_{n}}{\mu_{n}}R_{n}}} -
	\mu_{n}R_{n} +
	\mu_{n}I_{n}^{0}.
	\end{equation}
	
\end{lemma}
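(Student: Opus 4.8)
The plan is to turn the statement into an exact integro-differential identity for $R_n$ --- exactly as the recovered density of the SIR model was recast as a scalar equation in \hyperref[SS_{1}_2_rap_sir]{Section \ref*{SS_{1}_2_rap_sir}} --- and then to dominate the nonlinear memory term by invoking the pointwise control of $S_{n-1}$ by $R_{n-1}$ already proved in \hyperref[LE_1_control_Sn_with_Rn]{Lemma \ref*{LE_1_control_Sn_with_Rn}}. Throughout I adopt the convention $R_0 \equiv S_0^\star$, so that the shifted index $R_{n-1}$ is meaningful at $n=1$ and is compatible with \eqref{EQ_le1_control_S0}. First I would multiply the equation \eqref{EQ_MODEL_b} for $I_n$ by $\mu_n$, use $\partial_t R_n = \mu_n I_n$ from \eqref{EQ_RN_a} to replace every occurrence of $\mu_n I_n$ by $\partial_t R_n$, and integrate in time from $0$ to $t$. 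Since $R_n|_{t=0}\equiv 0$ and $\partial_t R_n|_{t=0} = \mu_n I_n^0$, and since $\Delta$ commutes with the time integral by parabolic regularity of the classical solution, this produces the identity
$$\partial_t R_n = d_n \Delta R_n - \mu_n R_n + \mu_n I_n^0 + \alpha_n \int_0^t S_{n-1}\,\partial_s R_n\,ds.$$

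The key step is to evaluate the memory integral \emph{exactly} rather than estimate it crudely. The naive bound $S_{n-1}\le R_{n-1}$ only yields the quadratic term $\alpha_n R_{n-1} R_n$, which is too large; instead I would use the evolution of $S_{n-1}$ itself. For $n\ge 2$, equation \eqref{EQ_MODEL_c} at index $n-1$ reads $\partial_s S_{n-1} = \mu_{n-1} I_{n-1} - \alpha_n S_{n-1} I_n = \partial_s R_{n-1} - \tfrac{\alpha_n}{\mu_n} S_{n-1}\,\partial_s R_n$, hence $\alpha_n S_{n-1}\,\partial_s R_n = \mu_n(\partial_s R_{n-1} - \partial_s S_{n-1})$. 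Integrating and using $R_{n-1}|_{t=0}=S_{n-1}|_{t=0}\equiv 0$ gives the telescoping
$$\alpha_n \int_0^t S_{n-1}\,\partial_s R_n\,ds = \mu_n\big(R_{n-1}(t,x) - S_{n-1}(t,x)\big).$$
The base case $n=1$ is identical, starting from \eqref{EQ_MODEL_a} and using $S_0|_{t=0}\equiv S_0^\star = R_0$, which produces $\mu_1(S_0^\star - S_0) = \mu_1(R_0 - S_0)$.

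Substituting this back leaves $\partial_t R_n = d_n\Delta R_n - \mu_n R_n + \mu_n I_n^0 + \mu_n(R_{n-1} - S_{n-1})$, so all that remains is the pointwise inequality $\mu_n(R_{n-1}-S_{n-1}) \le \mu_n R_{n-1}\big(1 - e^{-\frac{\alpha_n}{\mu_n}R_n}\big)$. This is equivalent to $S_{n-1} \ge R_{n-1}\,e^{-\frac{\alpha_n}{\mu_n}R_n}$, which is exactly the lower bound \eqref{EQ_le1_control_Sn} of \hyperref[LE_1_control_Sn_with_Rn]{Lemma \ref*{LE_1_control_Sn_with_Rn}} at index $n-1$ (and \eqref{EQ_le1_control_S0}, as an equality, when $n=1$), closing the argument. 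The main obstacle is precisely the exact evaluation of the memory term: the instinctive move is to bound $S_{n-1}$ by $R_{n-1}$, which destroys the saturating exponential and yields an estimate too weak to be propagated in the later induction; the telescoping above is what isolates $R_{n-1}-S_{n-1}$ and lets \hyperref[LE_1_control_Sn_with_Rn]{Lemma \ref*{LE_1_control_Sn_with_Rn}} finish the job. The remaining points are purely routine bookkeeping: the convention $R_0\equiv S_0^\star$, the nonzero initial datum of $S_0$ at $n=1$, and the standard justification that the solution is regular enough to differentiate under the integral and to commute $\Delta$ with $\int_0^t$.
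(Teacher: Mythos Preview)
Your proof is correct and follows essentially the same route as the paper's: derive the exact identity $\partial_t R_n = d_n\Delta R_n - \mu_n R_n + \mu_n I_n^0 + \mu_n(R_{n-1}-S_{n-1})$ by integrating the $I_n$-equation in time and telescoping the memory term, then apply the lower bound $S_{n-1}\ge R_{n-1}e^{-\frac{\alpha_n}{\mu_n}R_n}$ from \hyperref[LE_1_control_Sn_with_Rn]{Lemma \ref*{LE_1_control_Sn_with_Rn}}. You are in fact slightly more careful than the paper in making explicit the convention $R_0\equiv S_0^\star$ and in treating the base case $n=1$ separately via \eqref{EQ_MODEL_a}, points the paper's write-up leaves implicit.
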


\begin{proof}[\texorpdfstring{Proof of \hyperref[LE_2_Rn_sb_s_KPP_perturbed]{Lemma \ref*{LE_2_Rn_sb_s_KPP_perturbed}}}{Proof of Lemma \ref*{LE_2_Rn_sb_s_KPP_perturbed}}]
The proof relies on a computation similar to the one presented in \hyperref[SS_{1}_2_rap_sir]{Section \ref*{SS_{1}_2_rap_sir}} to obtain \eqref{ex kpp}: we multiply \eqref{EQ_MODEL_b} by $\mu_{n}$ and using the definition of $R_n$, \eqref{EQ_RN_a}, we find
$$
\partial_{tt}R_{n} = d_{n} \Delta\partial_{t}R_{n} + \mu_{n}\alpha_{n}S_{n-1}I_{n} - \mu_{n}\partial_{t}R_{n}.
$$
Integrating from $0$ to $t$ and recalling that $R_{n}|_{t=0}\equiv 0$, we obtain
\begin{equation}\label{EQ_Lemme2_presque}
\partial_{t}R_{n} - \mu_{n}I_{n}^{0} =
d_{n} \Delta R_{n} +
\mu_{n} \int_{0}^{t} \alpha_{n} S_{n-1}I_{n} \, ds -
\mu_{n}R_{n}.
\end{equation}
Combining
\eqref{EQ_MODEL_c}
with the definition of $R_n$,
\eqref{EQ_RN_a},
the term under the integral rewrites
$$
\alpha_{n} S_{n-1}I_{n} = \partial_{t}\prth{R_{n-1}-S_{n-1}},
$$
so that we have
\begin{equation}\label{eq plus tard}
\partial_{t}R_{n} - \mu_{n}I_{n}^{0} =
d_{n} \Delta R_{n} +
\mu_{n} \prth{R_{n-1} - S_{n-1}} -
\mu_{n}R_{n}.
\end{equation}
Now, using the lower bound \eqref{EQ_le1_control_Sn} for $S_{n}$ given in \hyperref[LE_1_control_Sn_with_Rn]{Lemma \ref*{LE_1_control_Sn_with_Rn}}, we find
$$
\partial_{t}R_{n} \leq
d_{n}\Delta R_{n} +
\mu_{n} \Prth{R_{n-1}-R_{n-1} \, e^{-\frac{\alpha_{n}}{\mu_{n}}R_{n}}} -
\mu_{n}R_{n} +
\mu_{n}I_{n}^{0},
$$
which provides \eqref{EQ_Rn_sb_sol_KPP_perturbed}.
\end{proof}

\medskip

A consequence of \hyperref[LE_2_Rn_sb_s_KPP_perturbed]{Lemma \ref*{LE_2_Rn_sb_s_KPP_perturbed}} is that the functions $S_{n}, I_{n}, R_n$ are uniformly bounded.

\begin{lemmaNOBREAK}[Uniform upper bounds on $S_{n}$, $I_{n}$ and $R_{n}$]\label{lem bounded}
	For all $n \in \N$, there is $K_{n}>0$ such that, for all $t>0$ and $x\in\mathbb{R}$,
	$$
		S_n(t,x)+I_n(t,x)+R_n(t,x)\leq K_n.
	$$
\end{lemmaNOBREAK}

\begin{proof}[\texorpdfstring{Proof of \hyperref[lem bounded]{Lemma \ref*{lem bounded}}}{Proof of Lemma \ref*{lem bounded}}]

$\bullet$ \textit{Upper bound on $R_n$}. We argue by induction. If $n=1$, then as explained in \hyperref[n=1]{Remark \ref*{n=1}}, we have $\partial_{t} R_1 = d_{1} \Delta R_1 + f_1(R_1)+\mu_{1} I_{1}^{0}$, and $R_1(0,\point)\equiv0$. We can take $C_1>0$ such that $0\geq f_1(C_1)+ \mu_{1} I_{1}^{0}$, so that $C_1$ is supersolution to the \OK{equation} that $R_1$ solves. Hence, by the parabolic comparison principle, $R_1\leq C_1$.

\medskip

Now, for $n\in \N^\star$, $n>1$, assume that there is $C_{n-1}>0$ such that $R_{n-1} \leq C_{n-1}$. Hence, because $R_{n}$ satisfies \eqref{EQ_Rn_sb_sol_KPP_perturbed} from \hyperref[LE_2_Rn_sb_s_KPP_perturbed]{Lemma \ref*{LE_2_Rn_sb_s_KPP_perturbed}}, we have
\begin{align*}
	\partial_{t} R_{n}
	& \leq
	d_{n} \Delta R_{n}
	+ \mu_{n} C_{n-1}
		\prth{1-e^{-\frac{\alpha_{n}}{\mu_{n}}R_{n}}}
	- \mu_{n} R_{n}
	+ \mu_{n} I_{n}^{0}.
\end{align*}
Because $I_{n}^{0}$ is bounded, using the parabolic comparison principle, we can find then a constant $C_{n}>0$ large enough to be a supersolution of the above equation. This gives $R_{n} \leq C_{n}$ for all $t>0$ and $x\in\mathbb{R}$. By induction, each $R_n$ is therefore uniformly bounded.

\medskip

$\bullet$ \textit{Upper bound on $S_n$}.
Combining \hyperref[LE_1_control_Sn_with_Rn]{Lemma \ref*{LE_1_control_Sn_with_Rn}} and the previous point, we have $S_n\leq R_n\leq C_{n}$ for all $t>0$ and $x\in\mathbb{R}$.

\medskip

$\bullet$ \textit{Upper bound on $I_n$}.
For $n\in \N^{\star}$, we know that $I_{n}$ satisfies \eqref{EQ_MODEL_b}, which is a linear parabolic equation with bounded coefficients.
Therefore, owing to the parabolic Harnack inequality
---
see \cite[Section 7, Theorem 10]{EvansPartial10} for instance ---
there is a constant $k_{n}>0$ such that for any $t>1$ and any $x\in\mathbb{R}$, we have
$$
I_{n}(t,x) \leq k_n \inf_{\tau \in [t+1,t+2]}I_{n}(\tau,x).
$$
Therefore
$$
I_{n}(t,x) \leq k_n \int_{t+1}^{t+2} I_{n}(\tau,x) \, d\tau \leq \frac{k_n}{\mu_{n}}R_n(t+2,x)\leq \frac{k_n}{\mu_{n}}C_n,
$$
hence $I_n$ is also uniformly bounded.
\end{proof}

As explained above, we prove \hyperref[PROP_Rn_propagation]{Proposition \ref*{PROP_Rn_propagation}} by induction. The induction hypothesis will be denoted $\HR{n}$ for $n\in\intervalleE{1}{\NN}$ and is the following.

\vspace{6.5mm}
\renewcommand{\gap}{17mm}
\renewcommand{\gapp}{4mm}
\begin{systemB}{-14mm}{-14.2mm}{10mm}{l}{\label{HR}}{\HR{n} :}
	\sup_{\delta < \verti{x} <ct}
	\verti{R_{n}(t,x) - \Sa{n}}
	\underset{
		\delta , t \rightarrow \infty
		}{
		\loongrightarrow}
	0,
	\qquad\quad \forall c \in (0,c_n),\label{HR_a}\\[4mm]
	\sup_{\verti{x} > c t}
	\verti{R_{n}(t,x)} 
	\underset{
		\delta , t \rightarrow \infty
		}{
		\loongrightarrow}
	0,
	\hspace{24,351mm}\forall c \in \intervalleoo{c_{n}}{+\infty}.\label{HR_b}
\end{systemB}

\vspace{4mm}

As observed in \hyperref[n=1]{Remark \ref*{n=1}}, we already know that $\HR{n}$ holds true for $n=1$.

For the sake of clarity, it will useful in some places to adopt the convention that $\HR{0}$ is a vacuously true hypothesis, meaning it is always satisfied. In other words, when we state \glmt{assume that $\HR{n}$ holds for $n=0$}, nothing is actually being assumed.

\begin{remark}\label{REM_Rn_lower_than_Sn_ast_plus_eps_de_x}
	It is clear that $R_{n}(t,x)$ is non-decreasing with respect to $t$. For any $n\in \N^{\star}$, if $\HR{n}$ holds true, the boundedness of $R_{n}$ given by \hyperref[lem bounded]{Lemma \ref*{lem bounded}} tells us that there is $R_{n}^{\infty}\prth{x}$ such that
	$$
	R_{n}\prth{t,x} \underset{t \to \infty}{\nearrow} R_{n}^{\infty}\prth{x},
	$$
	and this convergence is \textnormal{a priori} only pointwise --- it turns out that it is actually locally uniform, we shall discuss this later.

	In addition, observe that taking the limit $t\to+\infty$ in \eqref{HR_a}$,\HR{n}$ also implies that $R^\infty_{n}$ converges to $\Sa{n}$ when $x$ is large, that is
	\begin{equation}\label{lim_rinfty}
		R_n^\infty(x)\underset{\vert x \vert \to +\infty}{\longrightarrow} S_{n}^{\star}.
	\end{equation}

	In particular --- we shall use this several time in the sequel --- this means that there is $\e_n(x)$ such that $\e_n(x)\underset{\vert x \vert \to +\infty}{\longrightarrow} 0$ and
	\begin{equation*}\label{EQ_RN_leq_Sast_plus_eps}
	R_{n}\prth{t,x} \leq \Sa{n} + \e_{n}\prth{x},
	\qquad
	\forall
	t>0,
	\, \forall
	x \in \mathbb{R}.
	\end{equation*}
\end{remark}

\subsection{\texorpdfstring{$R_{n}$}{Rn} solves a perturbed Fisher-KPP equation}\label{SS_Rn_perturbed_KPP}

We define the reaction function $f_{n}$ for $n\in\intervalleE{1}{\Nlast}$ as
\begin{equation}\label{EQ_def_fn}
f_{n}\prth{z} : = \mu_{n}\Prth{\Sa{n-1} \Prth{1 - e^{-\frac{\alpha_{n}}{\mu_{n}} z}} - z},
\qquad
\text{for any }z\in\mathbb{R}.
\end{equation}
The function $f_{n}$ is a KPP reaction, and $f_{n}'(0)=\alpha_n S_{n-1}^{\star} - \mu_{n}>0$ for $n\leq\NN$ (owing to our definition of $N^{\star}$).
Moreover observe that,
$\Sa{n}$ as defined in
\eqref{EQ_def_S_star}
is the unique positive zero of $f_{n}$.

\medskip

The main point of this subsection is to show that $R_{n}$ satisfies a \glmt{perturbed} KPP equation.
\begin{proposition}[$R_{n}$ solves a perturbed KPP equation]
\label{prop_Kpp_perturbe_full}

Let $n\in\intervalleE{1}{\Nlast}$ and assume that
$\HR{n-1}$
holds true.
Then, there is $\e_n \in L^\infty(\R)$ such that
$
	\varepsilon_{n}\prth{x}
	\underset{\verti{x}\to+\infty}{\loongrightarrow}
	0
$
and
\begin{equation}\label{Kpp_perturbe_full}
	-\e_n\leq\partial_{t} R_{n} -
	d_{n} \Delta R_{n} -
	f_{n}\prth{R_{n}}
	\leq
	\varepsilon_{n},
\end{equation}
for any $t>0$ and any $x\in\mathbb{R}$.
\end{proposition}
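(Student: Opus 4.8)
The reduction is immediate from the exact identity \eqref{eq plus tard} established inside the proof of Lemma~\ref{LE_2_Rn_sb_s_KPP_perturbed}, namely $\partial_t R_n - d_n\Delta R_n = \mu_n(R_{n-1}-S_{n-1}) - \mu_n R_n + \mu_n I_n^0$. Since the reaction $f_n$ defined in \eqref{EQ_def_fn} satisfies $f_n(R_n) = \mu_n \Sa{n-1}\big(1-e^{-\frac{\alpha_n}{\mu_n}R_n}\big) - \mu_n R_n$, subtracting it gives
$$\partial_t R_n - d_n\Delta R_n - f_n(R_n) = \mu_n\,\mathcal{E}(t,x) + \mu_n I_n^0(x),\qquad \mathcal{E} := (R_{n-1}-S_{n-1}) - \Sa{n-1}\big(1-e^{-\frac{\alpha_n}{\mu_n}R_n}\big).$$
As $I_n^0$ is continuous and compactly supported, $\mu_n I_n^0(x)\to 0$ as $\verti{x}\to+\infty$, so it is harmless. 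The whole task is to sandwich $\mathcal{E}$ between two functions of $x$ alone that vanish at infinity, \emph{uniformly in $t$}. For $n=1$ there is nothing to prove: as noted in Remark~\ref{n=1}, $R_1$ solves the KPP equation exactly and $\mathcal{E}\equiv 0$; this is the base case, so below I take $n\geq 2$ and assume $\HR{n-1}$.

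The upper bound on $\mathcal{E}$ is the easy half. Writing $g:=1-e^{-\frac{\alpha_n}{\mu_n}R_n}\in[0,1]$, the lower bound for $S_{n-1}$ in Lemma~\ref{LE_1_control_Sn_with_Rn} gives $R_{n-1}-S_{n-1}\leq R_{n-1}\,g$, whence $\mathcal{E}\leq (R_{n-1}-\Sa{n-1})\,g\leq (R_{n-1}-\Sa{n-1})^+$. Under $\HR{n-1}$, Remark~\ref{REM_Rn_lower_than_Sn_ast_plus_eps_de_x} supplies $\e_{n-1}(x)\to 0$ with $R_{n-1}(t,x)\leq \Sa{n-1}+\e_{n-1}(x)$ for all $t$, so $\mathcal{E}\leq \e_{n-1}(x)$. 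This is just the sub-solution Lemma~\ref{LE_2_Rn_sb_s_KPP_perturbed} read together with $\HR{n-1}$.

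The lower bound on $\mathcal{E}$ is the crux. The crude estimate $R_{n-1}-S_{n-1}\geq 0$ only yields $\mathcal{E}\geq -\Sa{n-1}g$, which does \emph{not} vanish at infinity: for large $\verti{x}$ the function $R_n$ saturates near $\Sa{n}>0$, so $g$ stays bounded away from $0$. One must genuinely show that $R_{n-1}-S_{n-1}$ is close to $\Sa{n-1}g$. To this end I would treat the $S_{n-1}$ equation \eqref{EQ_MODEL_c} as a linear ODE in $t$ at fixed $x$: using $\mu_{n-1}I_{n-1}=\partial_t R_{n-1}$ and $\alpha_n I_n=\frac{\alpha_n}{\mu_n}\partial_t R_n$, it reads $\partial_t S_{n-1}+\frac{\alpha_n}{\mu_n}(\partial_t R_n)S_{n-1}=\partial_t R_{n-1}$, and the integrating factor $e^{\frac{\alpha_n}{\mu_n}R_n}$, together with $R_n(0,\cdot)=0$, gives the exact representation
$$\mathcal{E}(t,x)=\frac{\alpha_n}{\mu_n}\,e^{-\frac{\alpha_n}{\mu_n}R_n(t,x)}\int_0^t\big(R_{n-1}(s,x)-\Sa{n-1}\big)\,\partial_s R_n(s,x)\,e^{\frac{\alpha_n}{\mu_n}R_n(s,x)}\,ds.$$
The weight $\partial_s R_n\,e^{\frac{\alpha_n}{\mu_n}R_n}$ concentrates at the times when opinion $n$ reaches $x$, and the heuristic is that by then opinion $n-1$ has long been established, so $R_{n-1}(s,x)\approx \Sa{n-1}$ on the effective support of the weight.

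Making this separation of time scales quantitative is the main obstacle. I would fix $\sigma>0$ and split the integral at the time $s_0(x)$ when $R_{n-1}(\cdot,x)$ first reaches $\Sa{n-1}-\sigma$: on $[s_0(x),t]$ the factor $R_{n-1}-\Sa{n-1}$ lies in $[-\sigma,\e_{n-1}(x)]$, so after normalisation by the prefactor this part contributes at most $\sigma+\e_{n-1}(x)$; on $[0,s_0(x)]$ one bounds $\verti{R_{n-1}-\Sa{n-1}}\leq \Sa{n-1}$ and checks that the normalised weight is bounded by a constant times $R_n(s_0(x),x)$. It then remains to prove $R_n(s_0(x),x)\to0$ as $\verti{x}\to+\infty$. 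By $\HR{n-1}$ the front of $R_{n-1}$ travels at speed $c_{n-1}$, so $s_0(x)\approx \verti{x}/c_{n-1}$, placing the point $(s_0(x),x)$ strictly ahead of the front of $R_n$: indeed, by Lemma~\ref{LE_2_Rn_sb_s_KPP_perturbed}, $R_n$ is a sub-solution of a KPP equation whose reaction, away from the compact support of the data, has linearisation $\alpha_n\Sa{n-1}-\mu_n$ and thus spreads no faster than $\approx c_n$, while Assumption~\ref{H1} gives $c_n<c_{n-1}$. A standard KPP comparison then forces $R_n(s_0(x),x)\to0$; letting $\verti{x}\to+\infty$ and then $\sigma\to0$ produces the required $\e_n(x)$ with $\mathcal{E}\geq-\e_n(x)$. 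The genuinely delicate points are the localised (spatially non-uniform reaction) KPP spreading bound for $R_n$ and keeping every estimate uniform in $t$.
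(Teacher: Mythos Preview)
Your approach is correct and essentially the same as the paper's: both split the time integral at a moment after $R_{n-1}$ has reached $\approx\Sa{n-1}$ but before $R_n$ arrives, controlling the late piece by $\HR{n-1}$ and the early piece by the exponential barrier of Lemma~\ref{LE_4_control_above_Rn_In} (which is precisely your ``standard KPP comparison''). The only cosmetic differences are that the paper splits at the explicit time $\verti{x}/\tilde c$ with $\tilde c=(c_n+c_{n-1})/2$, thereby avoiding your auxiliary parameter $\sigma$, whereas you obtain the tidier exact integral representation of $\mathcal E$ via the integrating factor instead of the paper's chain of inequalities producing the error terms $\rho_n^1,\rho_n^2$.
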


\begin{remark}\label{REM_explanation_KPP_perturbe}
\hyperref[prop_Kpp_perturbe_full]{Proposition \ref*{prop_Kpp_perturbe_full}} is the cornerstone for proving the spreading result stated in \hyperref[PROP_Rn_propagation]{Proposition \ref*{PROP_Rn_propagation}}. Indeed, if we had $\varepsilon_{n}=0$ in \eqref{Kpp_perturbe_full}
	---
	\ie if $R_{n}$ would solve exactly
	$\partial_{t} R_{n} = d_{n} \Delta R_{n} + f_{n}\prth{R_{n}}$
	---,
	classical results from reaction-diffusion theory would directly yield that $R_{n}$ spreads with speed $c_{n}$ toward $\Sa{n}$, that is, $\HR{n}$ would be verified.

	The presence of the perturbation $\varepsilon_{n}$ rises some technicalities, that will be tackled in the next \hyperref[SS_Propa_Rn]{Section \ref*{SS_Propa_Rn}}.
\end{remark}

The proof of \hyperref[prop_Kpp_perturbe_full]{Proposition \ref*{prop_Kpp_perturbe_full}} relies on \hyperref[LE_2_Rn_sb_s_KPP_perturbed]{Lemma \ref*{LE_2_Rn_sb_s_KPP_perturbed}} above and on the two other lemmas, namely \hyperref[LE_4_control_above_Rn_In]{Lemma \ref*{LE_4_control_above_Rn_In}} and \hyperref[LE_3_Rn_sp_s_KPP_perturbed]{Lemma \ref*{LE_3_Rn_sp_s_KPP_perturbed}}, that we now state and prove.

\begin{lemmaNOBREAK}[Exponential estimates]
	\label{LE_4_control_above_Rn_In}
	Let $n\in\intervalleE{1}{\Nlast}$
	and assume that the induction hypothesis $\HR{n-1}$ holds true.
	Then we have the following upper bounds on $R_{n}$ and $I_{n}$.

	For all $c>c_{n}$, there are $\Lambda_{n},\tilde{\Lambda}_{n}, \lambda_{n}>0$ such that
	\begin{equation}\label{LE_4_control_above_Rn_In_3}
		I_{n}\prth{t,x} \leq \Lambda_{n} \, e^{-\lambda_{n}\prth{x-ct}}, \qquad \forall t>0,\, \forall x\in \R,
	\end{equation}
	and
	\begin{equation}\label{LE_4_control_above_Rn_In_4}
		R_{n}\prth{t,x} \leq \tilde{\Lambda}_{n} \, e^{-\lambda_{n}\prth{x-ct}}, \qquad \forall t>0,\, \forall x\in \R.
	\end{equation}
\end{lemmaNOBREAK}

\begin{proof}[\texorpdfstring{Proof of \hyperref[LE_4_control_above_Rn_In]{Lemma \ref*{LE_4_control_above_Rn_In}}}{Proof of Lemma \ref*{LE_4_control_above_Rn_In}}]

$\bullet$ \textit{Proof of \eqref{LE_4_control_above_Rn_In_3}}.
Let $c>c_{n}$ be fixed. By definition of $c_n$, we can find $\e>0$ small enough so that
$$
c>2\sqrt{d_{n}(\alpha_n(S_{n-1}^{\star}+\e) - \mu_{n})}.
$$
From \eqref{EQ_le1_control_Sn}, we have
$$
\partial_{t} I_{n} = d_{n} \Delta I_{n} + (\alpha_n S_{n-1} - \mu_{n})I_{n}
\leq d_{n} \Delta I_{n} +(\alpha_n R_{n-1} - \mu_{n})I_{n},\qquad \forall t>0,\, \forall x\in \R. 
$$
Moreover, according to \hyperref[REM_Rn_lower_than_Sn_ast_plus_eps_de_x]{Remark \ref*{REM_Rn_lower_than_Sn_ast_plus_eps_de_x}}, since we assume that $\HR{n-1}$ holds, we can take $R>0$ large enough so that, for any $x>R$, we have $R_{n-1}(t,x)\leq S_{n-1}^{\star} + \e$ (where the $\e$ has been chosen above). As a result,
$$
\partial_{t} I_{n} \leq d_{n} \Delta I_{n} +(\alpha_n(S_{n-1}^{\star} +\e)-\mu_{n})I_{n},\qquad \forall t>0,\, \forall x>R.
$$
Define the function
$\bara{I_{n}}\prth{t,x} : = \Lambda_{n} e^{-\lambda_{n}\prth{x-ct}}$, where $\Lambda_{n}$ and $\lambda_n$ are two positive constants, chosen such that
$$
	c\lambda_{n} - d_{n}\lambda_{n}^{2} -
	\prthh{\alpha_{n} (\Sa{n-1} + \e) - \mu_{n}}
	\geq 0,
$$
(this is possible because the discriminant of this equation is positive), and with $\Lambda_{n}$ chosen sufficiently large to ensure that, at initial time,
$$
	\bara{I_{n}}|_{t=0}\prth{x}
	\, = \,
	\Lambda_{n} e^{-\lambda_{n}x}
	\, \geq \,
	I_{n}|_{t=0}\prth{x},
	\qquad
	\text{for all }x>R,
$$
and that, for all $t>0$,
\begin{equation}\label{EQ_assert_I_{n}_super_sol}
	\bara{I_{n}}\prth{t,R}
	\, = \,
	\Lambda_{n} e^{-\lambda_{n}\prth{R-ct}}
	\, \geq \,
	\Lambda_{n} e^{-\lambda_{n}R}
	\, \geq \,
	\sup_{t>0}\sup_{x\in \R}I_n(t,x).
\end{equation}
Therefore, the function $\bara{I_{n}}$ satisfies $\partial_{t} \bara{I_{n}} \geq d_{n}\Delta \bara{I_{n}} +(\alpha_n(S_{n-1}^{\star}+\e)-\mu_{n})\bara{I_{n}}$ for $t>0$ and $x>R$, and $\bara{I_{n}}(0,\point)\geq I_{n}(0,\point)$ and $\bara{I_{n}}(t,R)\geq I_{n}(t,R)$. Hence, the parabolic comparison principle (applied on the set $(0,+\infty)\times (R,+\infty)$ to $\bara{I_{n}}, I_{n}$) yields $\bara{I_{n}} \geq I_{n}$
for any $t>0$ and $x>R$.

Now, for $x\leq R$ and $t>0$, we have, using \hyperref[lem bounded]{Lemma \ref*{lem bounded}},
$$
	\bara{I_{n}}\prth{t,x}
	\, \geq \,
	\Lambda_{n} e^{-\lambda_{n}R}
	\stackrel{\eqref{EQ_assert_I_{n}_super_sol}}{\geq}
	\sup_{t>0}\sup_{x\in \R}I_{n}\prth{t,x},
$$
and this completes the proof of \eqref{LE_4_control_above_Rn_In_3}.

\medskip

$\bullet$ \textit{Proof of \eqref{LE_4_control_above_Rn_In_4}}.
In view of \eqref{EQ_RN}, we multiply \eqref{LE_4_control_above_Rn_In_3} by $\mu_{n}$ before integrating between $0$ to $t$, this yields
$$
	R_{n}\prth{t,x} \leq
	\frac{\mu_{n}}{c\lambda_{n}} \Lambda_{n} e^{-\lambda_{n}\prth{x-ct}},
$$
for all $t>0$ and $x\in\mathbb{R}$. This gives \eqref{LE_4_control_above_Rn_In_4}, with
$\tilde{\Lambda}_{n}=\frac{\mu_{n}}{c\lambda_{n}} \Lambda_{n}$.
\end{proof}

\begin{remark}[Symmetric version of \eqref{LE_4_control_above_Rn_In_3} and \eqref{LE_4_control_above_Rn_In_4}]\label{REM_expo_upper_bounds_LHS}
	Using similar arguments, we also find that, for all $t>0$ and $x\in \R$,
	\eqref{LE_4_control_above_Rn_In_3} and \eqref{LE_4_control_above_Rn_In_4},
	\begin{equation}\label{LE_4_control_above_Rn_In_3_sym}
		I_{n}\leq \Lambda_{n} \, e^{\lambda_{n}\prth{x+ct}},
	\end{equation}
	and
	\begin{equation}\label{LE_4_control_above_Rn_In_4_sym}
		R_{n} \leq \tilde{\Lambda}_{n} \, e^{\lambda_{n}\prth{x+ct}}.
	\end{equation}
\end{remark}

\begin{lemmaNOBREAK}[$R_{n}$ is super-solution to a perturbed Fisher-KPP equation]\label{LE_3_Rn_sp_s_KPP_perturbed}
	Assume $\Nlast\geq 2$. Let $n\in\intervalleE{2}{\Nlast}$
	and assume that the induction hypothesis $\HR{n-1}$ holds true.
	Then there is $\e_n \in L^\infty(\R)$ such that	$\varepsilon_{n}\prth{x} \underset{\verti{x}\to+\infty}{\loongrightarrow} 0$
	and
	\begin{equation}\label{EQ_Rn_sp_sol_KPP_pertur_1}
		\partial_{t}R_{n} \geq
		d_{n}\Delta R_{n} +
		f_{n}\prth{R_{n}} -
		\varepsilon_{n},\qquad \forall t>0,\, \forall x\in \R.
	\end{equation}
\end{lemmaNOBREAK}

\begin{proof}[\texorpdfstring{Proof of \hyperref[LE_3_Rn_sp_s_KPP_perturbed]{Lemma \ref*{LE_3_Rn_sp_s_KPP_perturbed}}}{Proof of Lemma \ref*{LE_3_Rn_sp_s_KPP_perturbed}}]
Consider \eqref{EQ_Lemme2_presque} as established in the proof of \hyperref[LE_2_Rn_sb_s_KPP_perturbed]{Lemma \ref*{LE_2_Rn_sb_s_KPP_perturbed}}.
By using the definition of $R_{n}$ \eqref{EQ_RN_a}, this equality can be recast
\begin{equation}\label{EQ_Lemme3_presque}
	\partial_{t}R_{n} =
	d_{n} \Delta R_{n}
	+ \mu_{n} \int_{0}^{t} \frac{\alpha_{n}}{\mu_{n}} S_{n-1} \, \partial_{t}R_{n} \, ds
	- \mu_{n}R_{n}
	+ \mu_{n}I_{n}^{0}.
\end{equation}
Let $\tilde{c} =  \frac{c_n + c_{n-1}}{2}$ be fixed. Due to the positivity of the integrated term in \eqref{EQ_Lemme3_presque} we can write (where the minimum of two reals $a$ and $b$ is denoted $a \wedge b :=\min\{a,b\}$)
$$
	\int_{0}^{t} \frac{\alpha_{n}}{\mu_{n}} S_{n-1} \, \partial_{t}R_{n} \, ds
	\geq
	\int_{\MIN}^{t} \; \frac{\alpha_{n}}{\mu_{n}} S_{n-1} \, \partial_{t}R_{n} \, ds,\qquad \forall t>0,\ x\in \R.
$$
Using \eqref{EQ_le1_control_Sn} from \hyperref[LE_1_control_Sn_with_Rn]{Lemma \ref*{LE_1_control_Sn_with_Rn}}, we get
$$
	\int_{0}^{t} \frac{\alpha_{n}}{\mu_{n}} S_{n-1} \, \partial_{t}R_{n} \, ds
	\geq
	\int_{\MIN}^{t} \; \frac{\alpha_{n}}{\mu_{n}} R_{n-1} \, e^{-\frac{\alpha_{n}}{\mu_{n}}R_{n} } \, \partial_{t}R_{n} \, ds,
$$
and using the fact that $R_{n-1}(t,x)$ is non-decreasing with respect to the $t$ variable yields
\renewcommand{\gap}{-25mm}
\begin{align}
 	\int_{0}^{t} \frac{\alpha_{n}}{\mu_{n}} S_{n-1} \, \partial_{t}R_{n} \, ds&
	\nonumber\\[3mm]
	& \hspace{\gap}
	\geq R_{n-1}\!\Prth{\MIN,x}
	\int_{\MIN}^{t} \; \frac{\alpha_{n}}{\mu_{n}} e^{-\frac{\alpha_{n}}{\mu_{n}}R_{n}} \, \partial_{t}R_{n} \, ds \nonumber\\[3mm]
	& \hspace{\gap}
	= R_{n-1}\!\Prth{\MIN,x}
	\int_{\MIN}^{t} \; - \partial_{t} \Prth{e^{-\frac{\alpha_{n}}{\mu_{n}}R_{n}}} \, ds
	\nonumber\\[3mm]
	& \hspace{\gap}
	= R_{n-1}\!\Prth{\MIN,x}
	\Croch{
		\exp\Prth{-\frac{\alpha_{n}}{\mu_{n}}R_{n}\!\Prth{\MIN,x}} -
		\exp\Prth{-\frac{\alpha_{n}}{\mu_{n}}R_{n}\prth{t,x}}
	}\nonumber\\[4mm]
	& \hspace{\gap}
	= R_{n-1}\!\Prth{\MIN,x}
	\Croch{
		\Prth{1 - \exp\Prth{-\frac{\alpha_{n}}{\mu_{n}}R_{n}\prth{t,x}}} -
		\Prth{1 - \exp\Prth{-\frac{\alpha_{n}}{\mu_{n}}R_{n}\!\Prth{\MIN,x}}}
	}\nonumber\\[2mm]
	& \hspace{-15mm}
	+ \Sa{n-1}\Prth{1 - \exp\Prth{-\frac{\alpha_{n}}{\mu_{n}}R_{n}\prth{t,x}}}
	- \Sa{n-1}\Prth{1 - \exp\Prth{-\frac{\alpha_{n}}{\mu_{n}}R_{n}\prth{t,x}}}\nonumber\\[4mm]
	& \hspace{\gap}
	= \Sa{n-1}\Prth{1 - \exp\Prth{-\frac{\alpha_{n}}{\mu_{n}}R_{n}\prth{t,x}}}
	- \rho^1_{n}\prth{t,x}-\rho^2_{n}\prth{t,x},\label{EQ_Lemme3_presque_bis}
\end{align}
where
\begin{equation}\label{EQ_E_erreur_dessous1}
    \rho^1_{n}\prth{t,x} = R_{n-1}\!\Prth{\MIN, x}
		\Croch{
			1-\exp\!\Prth{-\frac{\alpha_{n}}{\mu_{n}}R_{n}\!\Prth{\MIN, x}}}
\end{equation}
and
\begin{equation}\label{EQ_E_erreur_dessous2}
    \rho^2_{n}\prth{t,x} = \crochhhh{\Sa{n-1} - R_{n-1}\!\Prth{\MIN, x}}
		\Croch{
			1-\exp\!\Prth{-\frac{\alpha_{n}}{\mu_{n}}R_{n}\prth{t, x}}}.
\end{equation}
Combining \eqref{EQ_Lemme3_presque} and \eqref{EQ_Lemme3_presque_bis},
we are eventually led to
\begin{equation}\label{EQ_Lemme3_presque_ter}
	\partial_{t}R_{n} \geq
	d_{n}\Delta R_{n} +
	f_{n}\prth{R_{n}} +
	  \mu_{n} I_{n}^{0}
	- \mu_{n} \rho_{n}^1\prth{t,x} - \mu_{n} \rho^2_{n}\prth{t,x}.
\end{equation}
The goal now is to draw estimates from above of
$\rho_{n}^{1}\prth{t,x}$ and $\rho_{n}^{2}\prth{t,x}$.

\medskip

$\bullet$ \textit{Estimate on $\rho_{n}^{1}$}. Owing to \hyperref[lem bounded]{Lemma \ref*{lem bounded}}, $R_{n-1} \leq K_{n-1}$. Using the concavity of $z\mapsto 1-e^{-z}$ and the fact that
$R_{n}$ is non-decreasing with respect to the $t$ variable, we have
\begin{align*}
	\rho_{n}^{1}\prth{t,x}
	&\leq K_{n-1}
		\Croch{
			1-\exp\!\Prth{-\frac{\alpha_{n}}{\mu_{n}}R_{n}\!\Prth{\MIN, x}}
		}
	\\[2mm]
	&\leq K_{n-1}\frac{\alpha_{n}}{\mu_{n}}
	R_{n}\!\Prth{\MIN, x}
	\\[2mm]
	&\leq K_{n-1}\frac{\alpha_{n}}{\mu_{n}}
	R_{n}\!\Prth{\frac{\verti{x}}{\tilde{c}}, x}.
 \end{align*}
Now, fixing $c>0$ such that $c_{n}<c<\tilde{c}=\prth{c_{n}+c_{n-1}}/2$, \eqref{LE_4_control_above_Rn_In_4} from \hyperref[LE_4_control_above_Rn_In]{Lemma \ref*{LE_4_control_above_Rn_In}} gives (for $x>0$)
$$
	\rho_{n}^{1}\prth{t,x}
	\leq
	K_{n-1}\frac{\alpha_{n}}{\mu_{n}}
	\tilde{\Lambda}_{n} \times
	\text{exp}\prthhhh{
		\hspace{-1mm} -\lambda_{n}\underbrace{\Prth{1-\frac{c}{\tilde{c}}}}_{>0}x
	}.
$$
 Similarly, we get, for $x<0$,
$$
	\rho_{n}^{1}\prth{t,x}
	\leq
	K_{n-1}\frac{\alpha_{n}}{\mu_{n}}
	\tilde{\Lambda}_{n} \times
	\text{exp}\prthhhh{
		\lambda_{n}\Prth{1-\frac{c}{\tilde{c}}}x
	},
$$
so that in the end, we have that there are $K,q>0$ such that
\begin{equation}\label{EQ_estimate_rho_1}
	\rho_{n}^{1}\prth{t,x}
	\leq
	Ke^{-q \vert x \vert}.
\end{equation}

\medskip

$\bullet$ \textit{Estimate on $\rho_{n}^{2}$}. 
We write
$$
\rho^2_{n}\prth{t,x} = \rho^2_{n}\prth{t,x}\indicatrice{\vert x \vert < \tilde c t} + \rho^2_{n}\prth{t,x}\indicatrice{\vert x \vert \geq \tilde c t}.
$$
Because $S_{n},R_n$ are bounded (thanks to \hyperref[LE_4_control_above_Rn_In]{Lemma \ref*{LE_4_control_above_Rn_In}}), and using the concavity of $z\mapsto 1- e^{-z}$ and the fact that $R_n$ is non-decreasing with respect to the $t$ variable, we have that there is $K>0$ such that
$$
\vert \rho^2_{n}\prth{t,x}\indicatrice{\vert x \vert \geq \tilde c t}\vert \leq K R_n(t,x)\indicatrice{\vert x \vert \geq \tilde c t}\leq K R_n\left(\frac{\vert x \vert}{\tilde c},x\right).
$$
We are in the same situation than in the previous step, we can use the exponential estimates from \hyperref[LE_4_control_above_Rn_In]{Lemma \ref*{LE_4_control_above_Rn_In}} to find that there is $q>0$ such that
$$
\vert \rho^2_{n}\prth{t,x}\indicatrice{\vert x \vert \geq \tilde c t}\vert\leq Ke^{-q \vert x \vert}.
$$
On the other hand, we have
$$
\vert \rho^2_{n}\prth{t,x}\indicatrice{\vert x \vert < \tilde c t}\vert \leq \left\vert S_{n-1}^{\star} - R_{n-1}\Prth{\MIN,x}\right\vert \indicatrice{\vert x \vert < \tilde c t}
\leq \left\vert S_{n-1}^{\star} - R_{n-1}\Prth{\frac{\vert x \vert}{\tilde c},x}\right\vert .
$$
Because $\HR{n-1}$ holds true and because $\tilde c<c_{n-1}$, the quantity $x\mapsto \vert S_{n-1}^{\star} - R_{n-1}(\frac{\vert x \vert}{\tilde c},x)\vert$ goes to zero as $\vert x \vert$ goes to $+\infty$.

\medskip

Therefore, we have proven that $\rho_n^1(t,x)+\rho_n^2(t,x) \leq \e(x)$, where $\e(x)\underset{\vert x \vert \to +\infty}{\longrightarrow}0$, the result follows.
\end{proof}

\medskip

Combining all that precedes, we are now in position to prove \hyperref[prop_Kpp_perturbe_full]{Proposition \ref*{prop_Kpp_perturbe_full}}.

\medskip

\begin{proof}[\texorpdfstring{Proof of \hyperref[prop_Kpp_perturbe_full]{Proposition \ref*{prop_Kpp_perturbe_full}}}{Proof of Proposition \ref*{prop_Kpp_perturbe_full}}]
If $n=1$, then we already have the result as explained in \hyperref[n=1]{Remark~\ref*{n=1}}. We suppose from now on that $\Nlast\geq 2$ and $n\geq 2$.

Assume that $\HR{n-1}$ holds true. Then, owing to \hyperref[REM_Rn_lower_than_Sn_ast_plus_eps_de_x]{Remark \ref*{REM_Rn_lower_than_Sn_ast_plus_eps_de_x}}, we have $R_{n-1}\leq S_{n-1}^{\star} + \e_n(x)$, for some function $\e_n$ such that $\e_n(x)\underset{\vert x \vert \to +\infty}{\longrightarrow}0$. Combining this with \hyperref[LE_2_Rn_sb_s_KPP_perturbed]{Lemma \ref*{LE_2_Rn_sb_s_KPP_perturbed}} yields that
$$
\partial_{t}R_{n} \leq
	d_{n}\Delta R_{n} +
	\mu_{n} S_{n-1}^{\star}\!\Prth{1-e^{-\frac{\alpha_{n}}{\mu_{n}}R_{n}}} -
	\mu_{n}R_{n}
	+\mu_{n} \e_n + \mu_{n}I_{n}^{0}.
$$
On the other hand, \hyperref[LE_3_Rn_sp_s_KPP_perturbed]{Lemma \ref*{LE_3_Rn_sp_s_KPP_perturbed}} yields that there is $\tilde \varepsilon_{n}$ such that $\tilde \varepsilon_{n}\prth{x} \underset{\verti{x}\to+\infty}{\loongrightarrow} 0$ so that
	\begin{equation}\label{EQ_Rn_sp_sol_KPP_pertur_2}
		\partial_{t}R_{n} \geq
		d_{n}\Delta R_{n} +
		f_{n}\prth{R_{n}} -
		\tilde \varepsilon_{n}.
	\end{equation} 
Therefore, up to renaming $\max\{\vert \tilde \e_n\vert,\mu_{n}\vert \e_n\vert+\mu_{n}I_{n}^{0}\}$ as $\e_n$, the result follows.
\end{proof}

\subsection{Propagation of \texorpdfstring{$R_{n}$}{Rn} for \texorpdfstring{$n\in \intervalleE{1}{\Nlast}$}{1<=n<=N}}\label{SS_Propa_Rn}

The aim of this section is to prove \hyperref[PROP_Rn_propagation]{Proposition \ref*{PROP_Rn_propagation}}. We will proceed by induction: assuming $\HR{n-1}$, we prove that $\HR{n}$ holds true.

\OK{As a first step, we prove \eqref{HR_a} in a weaker form, in the sense that we show the convergence without the speed. This is outlined in the following lemma.}

\begin{lemma}\label{LE_5}
Let $n \in \intervalleE{1}{\Nlast}$ and assume that $\HR{n-1}$ holds true. 
Then
\begin{equation}\label{LE_5_a}
R_{n}\prth{t,x} \underset{t\to+\infty}{\longrightarrow} R_{n}^{\infty}\prth{x},
\end{equation}
locally uniformly.
Moreover
\begin{equation}\label{LE_5_b}
R_{n}^\infty\prth{x} \underset{\verti{x} \to +\infty}{\longrightarrow} \Sa{n}.
\end{equation}
\end{lemma}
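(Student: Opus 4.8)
The plan is to split the statement into two parts: upgrading the already-known pointwise monotone convergence to a locally uniform one, and then identifying the spatial limit of $R_n^\infty$ by squeezing it between sub- and super-solutions.

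For \eqref{LE_5_a}, I start from Remark \ref{REM_Rn_lower_than_Sn_ast_plus_eps_de_x}: since $\partial_t R_n = \mu_n I_n \geq 0$ and $R_n$ is bounded (Lemma \ref{lem bounded}), the function $R_n(t,x)$ increases pointwise to some $R_n^\infty(x)$. To make this locally uniform I would invoke parabolic regularity. The density $I_n$ solves the linear uniformly parabolic equation \eqref{EQ_MODEL_b} with coefficients bounded uniformly in $(t,x)$ (again Lemma \ref{lem bounded}), so interior parabolic estimates give uniform-in-time local H\"older and gradient bounds on $I_n=\tfrac1{\mu_n}\partial_t R_n$; combined with the boundedness of $R_n$ and of $\Delta R_n$ (read off from Proposition \ref{prop_Kpp_perturbe_full}, since $\partial_t R_n$, $f_n(R_n)$ and $\varepsilon_n$ are all bounded), the family $\{R_n(t,\cdot)\}_{t\geq 1}$ is equicontinuous on every compact set. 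Arzel\`a--Ascoli together with Dini's theorem for the monotone limit then give locally uniform convergence and the continuity of $R_n^\infty$.

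To prove \eqref{LE_5_b} I would first pass to the limit $t\to+\infty$ in the inequality of Proposition \ref{prop_Kpp_perturbe_full}. Barbalat's lemma applies ($I_n\geq 0$, $\int_0^\infty I_n\,dt = R_n^\infty/\mu_n<\infty$ pointwise, and $I_n$ is uniformly continuous in $t$ by the estimates above), so $\partial_t R_n\to 0$ locally uniformly; together with $C^2_{loc}$ compactness of the time-translates $R_n(\cdot+k,\cdot)$, this shows that $R_n^\infty$ is a bounded classical solution of $\verti{d_n (R_n^\infty)'' + f_n(R_n^\infty)}\leq \varepsilon_n(x)$ on $\R$, where $\varepsilon_n(x)\to 0$ as $\verti{x}\to\infty$. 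For the upper bound $\limsup_{\verti{x}\to\infty}R_n^\infty\leq \Sa{n}$, I use that $R_n$ is a sub-solution (Lemma \ref{LE_2_Rn_sb_s_KPP_perturbed}) and that $\HR{n-1}$ forces $R_{n-1}\leq \Sa{n-1}+\epsilon$ for $\verti{x}>X_\epsilon$: on that region $R_n$ is a sub-solution of a genuine KPP equation whose positive zero $\theta_\epsilon$ tends to $\Sa{n}$ as $\epsilon\to 0$, and I would compare $R_n$ with the stationary super-solution $\theta_\epsilon + Ce^{-\gamma(\verti{x}-X_\epsilon)}$, the exponential boundary layer being present only to dominate the (bounded) values of $R_n$ on $\verti{x}=X_\epsilon$. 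Letting $t\to\infty$ and then $\epsilon\to 0$ yields the bound.

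The main obstacle is the matching lower bound $\liminf_{\verti{x}\to\infty}R_n^\infty\geq \Sa{n}$, i.e. showing that $R_n$ genuinely propagates rather than decaying to $0$. Here I would use that $R_n$ is a super-solution of the true KPP equation $\partial_t u = d_n\Delta u + f_n(u)$ up to $\varepsilon_n$ (Lemma \ref{LE_3_Rn_sp_s_KPP_perturbed}) together with the ignition condition $f_n'(0)=\alpha_n \Sa{n-1}-\mu_n>0$. The delicate point is triggering: $R_n$ starts from $0$ and grows only through $\partial_t R_n=\mu_n I_n$, while $I_n$ grows only where $S_{n-1}>\mu_n/\alpha_n$. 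By $\HR{n-1}$ there is, for large $t$, a wide region where $R_{n-1}\approx \Sa{n-1}>\mu_n/\alpha_n$, and the lower bound $S_{n-1}\geq R_{n-1}e^{-\frac{\alpha_n}{\mu_n}R_n}$ of Lemma \ref{LE_1_control_Sn_with_Rn} keeps $S_{n-1}$ above $\mu_n/\alpha_n$ there as long as $R_n$ stays small; the parabolic Harnack inequality then forces $I_n$, hence $R_n$, above a fixed threshold on an interval longer than the KPP ignition length. Placing a standard compactly supported KPP sub-solution in that region (where $\varepsilon_n$ is negligible) and comparing it from below with the super-solution $R_n$ shows that $R_n$ invades toward $\Sa{n}$, which gives the lower bound after $t\to\infty$. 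Disentangling this self-consistent coupling --- $S_{n-1}$ must stay large long enough, over a wide enough region, to ignite $R_n$ before being depleted by the very growth it triggers --- is the crux of the argument.
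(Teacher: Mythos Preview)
Your treatment of \eqref{LE_5_a} and of the upper bound $\limsup_{\vert x\vert\to\infty}R_n^\infty\le \Sa{n}$ is fine; the explicit barrier $\theta_\epsilon+Ce^{-\gamma(\vert x\vert-X_\epsilon)}$ is a legitimate alternative to what the paper does.

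The lower bound, however, has a real gap. Your triggering argument runs as follows: on a long interval $[X,X+L]$ far from the origin, $R_{n-1}\approx \Sa{n-1}$; \emph{as long as $R_n$ stays small on the whole interval}, $S_{n-1}$ stays supercritical there and the eigenvalue comparison makes $I_n$ blow up, a contradiction. But the conclusion of this contradiction is only that $R_n(T,x_T)>\delta$ for \emph{some} $T$ and \emph{some} $x_T\in[X,X+L]$, not that $R_n(T,\cdot)\ge\delta$ on the whole interval. To seed a KPP/bistable subsolution you need the latter, and monotonicity in $t$ does not upgrade pointwise largeness to largeness on an interval of prescribed length $L$. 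Harnack for $I_n$ does not rescue this either: it compares $I_n$ to itself, but the growth estimate for $I_n$ already presupposed $S_{n-1}$ large, hence $R_n$ small, on the full ball.

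The paper closes exactly this gap with an elliptic Liouville step you do not have. After showing $R_n^\infty$ solves $\vert d_n (R_n^\infty)''+f_n(R_n^\infty)\vert\le\varepsilon_n$, it translates by any sequence $x_k\to\infty$ and uses elliptic compactness to extract a limit $\rho_\infty$ solving the \emph{unperturbed} equation $d_n\rho_\infty''+f_n(\rho_\infty)=0$; the only bounded nonnegative solutions are the constants $0$ and $\Sa{n}$. The crucial dividend is that if some subsequential limit is $0$, then $R_n^\infty(\cdot+x_k)\to0$ \emph{locally uniformly}, so $R_n(t,\cdot)\le R_n^\infty(\cdot)\le\epsilon$ on an entire ball $B_R(x_k)$ for all $t$. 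This is precisely the uniform-in-space smallness your eigenvalue argument for $I_n$ needs, and it is not available from your hypothesis alone. Once this is in place the eigenfunction subsolution $\phi(x)e^{-\lambda t}$ with $\lambda<0$ contradicts the boundedness of $I_n$, ruling out the value $0$ and forcing $R_n^\infty(x)\to \Sa{n}$. You should insert this compactness/classification step before the growth argument; without it the ``self-consistent coupling'' you correctly identify as the crux is not actually disentangled.
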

\OK{Observe that \hyperref[LE_5]{Lemma \ref*{LE_5}} differs from \hyperref[REM_Rn_lower_than_Sn_ast_plus_eps_de_x]{Remark \ref*{REM_Rn_lower_than_Sn_ast_plus_eps_de_x}}, since we claim \eqref{LE_5_a}-\eqref{LE_5_b} under $\HR{n-1}$, and not $\HR{n}$.}

We also recall that $\HR{0}$ is vacuously true, that is, when $n=1$, there is no hypothesis in the lemma, and we already know that the result is true, as explained in \hyperref[n=1]{Remark \ref*{n=1}}.

\medskip

\begin{proof}[\texorpdfstring{Proof of \hyperref[LE_5]{Lemma \ref*{LE_5}}}{Proof of Lemma \ref*{LE_5}}]
\OK{Assume that $\HR{n-1}$ holds true.}

\medskip
$\bullet$ \textit{Step 1. $R_{n}$ converges locally uniformly.}
Because the function $R_{n}(t,x)$ is non-decreasing with respect to $t$, and because it is uniformly bounded --- thanks to \hyperref[LE_4_control_above_Rn_In]{Lemma \ref*{LE_4_control_above_Rn_In}} ---, there is $R_\infty\prth{x}$ such that $R_{n}(t,x) \underset{t\to+\infty}{ \longrightarrow} R_{n}^\infty\prth{x}$. This convergence is pointwise. However, $R_{n}(t,x)$ solves \eqref{prop_Kpp_perturbe_full}, therefore, owing to standard parabolic regularity theory \cite{LiebermanSecond96}, the convergence is actually locally uniform, and moreover the limit function $R_{n}^\infty\prth{x}$ solves
$$
-\e_{n} \leq -d_{n}\Delta R_{n}^\infty - f_{n}(R_{n}^\infty) - \mu_{n} I^{0}_{n} \leq \e_{n}.
$$
In the rest of the proof, we take $\rho\in\R$ to be an arbitrary limit point of $R_{n}^\infty\prth{x}$ when $\verti{x}$ goes to $+\infty$.
This means that there is a sequence $(x_k)_{k\in \N}$ such that $\vert x_k\vert \to +\infty$ and
$$
\rho = \lim_{k\to+\infty}R_{n}^\infty(x_k).
$$
The rest of the proof consists in showing that $\rho = \Sa{n}$.

\medskip

$\bullet$ \textit{Step 2. Either $\rho=0$ or $\rho = \Sa{n}$.}
We denote
$$
\rho_k\prth{x} := R_{n}^\infty(x+x_k).
$$
It is classical from elliptic regularity theory \cite{GilbargElliptic01} that, up to extraction,
$$
\rho_k\prth{x}\underset{k\to+\infty}{\longrightarrow} \rho_\infty\prth{x},
$$
where $\rho_\infty$ solves the elliptic equation
$$
-d_{n}\Delta \rho_\infty - f_{n}(\rho_\infty) = 0.
$$
Moreover, because $R_{n}$ is uniformly bounded, the same holds true for $\rho_\infty$. It is a classical result in reaction-diffusion equations theory that the only bounded solutions of this equation are the zeros of the function $f_{n}$, that are the constants $0$ and $\Sa{n}$ (see \cite{AronsonMultidimensional78} for instance).

\medskip

By definition of $\rho_k$, we have
$$
\rho = \lim_{k\to+\infty}\rho_k(0),
$$
therefore, either $\rho=0$ or $\rho = \Sa{n}$.

\medskip

$\bullet$ \textit{Step 3. Proof that $\rho>0$.}
Assume by contradiction that $\rho=0$. Arguing as in the previous step, this implies that $R_{n}^\infty(\point+x_k) \to 0$ locally uniformly as $k \to +\infty$.

Now, let us show that there are $k\in \N$ and $T>0$ large enough so that
\begin{equation}\label{sur eps}
    \partial_{t} I_{n} \geq d_{n}\Delta I_{n} + (\alpha_{n} (S_{n-1}^{\star}-\e) e^{-\frac{\alpha_{n}}{\mu_{n}}\e} - \mu_{n}) I_{n},\qquad \forall t>T,\, \forall x\in B_R(-x_k).
\end{equation}
Because $R_{n}^\infty(\point+x_k) \to 0$ locally uniformly as $k$ goes to $+\infty$, we can find $k$ large enough so that
$$
R_{n}^{\infty}(x+x_k)\leq \e, \qquad \forall x\in B_R(0).
$$
Therefore, thanks to \eqref{EQ_le1_control_Sn}, we have, for any $t>0$ and $x\in B_R(-x_k)$,
$$
S_{n-1}(t,x)\geq R_{n-1}(t,x)e^{-\frac{\alpha_{n}}{\mu_{n}}R_{n}(t,x)}\geq R_{n-1}(t,x)e^{-\frac{\alpha_{n}}{\mu_{n}}R_{n}^\infty\prth{x}}
\geq R_{n-1}(t,x)e^{-\frac{\alpha_{n}}{\mu_{n}}\e}.
$$
Hence, because $\HR{n-1}$ holds true, up to increasing $k$ if needed, and choosing $T>0$ large enough, we have, for all $t>T$ and $x \in B_R(-x_k)$,
$$
S_{n-1}(t,x) \geq (S_{n-1}^{\star}-\e) e^{-\frac{\alpha_{n}}{\mu_{n}}\e}.
$$
This latter inequality implies that \eqref{sur eps} holds true.

\medskip

Let now $\lambda$ be the principal eigenvalue of the operator $-d_{n}\Delta  -(\alpha_n(S_{n}^{\star} - \e)e^{-\frac{\alpha_n}{\mu_{n}}\e}-\mu_{n})$ on $B_R(-x_k)$ with Dirichlet boundary conditions, and let $\phi$ be a positive eigenfunction associated to the principal eigenvalue (its existence is guaranteed by the Krein-Rutman theorem \cite{KreinLinear48}. Therefore, $\phi \in C^2(B_R(-x_k))$ is such that $\phi>0$ on $B_R(-x_k)$, $\phi=0$ on $\partial B_R(-x_k)$ and
$$
-d_{n} \Delta \phi -(\alpha_n(S_{n}^{\star} - \e)e^{-\frac{\alpha_n}{\mu_{n}}\e}-\mu_{n})\phi = \lambda \phi.
$$
Let $v(t,x) = \phi(x)e^{-\lambda t}$, it solves
$$
\partial_{t} v = d_{n}\Delta v + (\alpha_{n} (S_{n-1}^{\star}-\e) e^{-\frac{\alpha_{n}}{\mu_{n}}\e} - \mu_{n}) v,\qquad t>T,\ x\in B_R(-x_k)
$$
with Dirichlet boundary condition $v(t,x)=0$ for $x\in \partial B_R(-x_k)$ and $t>T$, and up to multiplying $\phi$ by a small constant, we can ensure that $v(T,\point)\leq I_{n}(T,\point)$. It then follows from the parabolic comparison principle that $I_{n}(t,x)\geq v(t,x)$ for all $t>T$ and $x\in B_R(-x_k)$.
It is classical (see \cite{BerestyckiGeneralizations15}) that, up to taking $R$ large enough, the principal eigenvalue $\lambda$ can be made as close as we want to $-(\alpha_n(S_{n}^{\star} - \e)e^{-\frac{\alpha_n}{\mu_{n}}\e}-\mu_{n})$, which is strictly negative (because $n\leq \Nlast$).

\medskip

Therefore, \OK{$v(t,x)=\phi(x)e^{-\lambda t}\to+\infty$ for all $x$ in $B_R(-x_k)$ as $t\to+\infty$}, which is in contradiction with the boundedness of $I_{n}$ given by \hyperref[lem bounded]{Lemma \ref*{lem bounded}}.

\medskip

In conclusion, we have proved that $\rho = S^{\star}_{n}$, that is, $R^\infty_{n}\prth{x}$ indeed converges toward $\Sa{n}$ as $\vert x\vert$ goes to $+\infty$.
\end{proof}

\medskip

We now turn to the proof of \hyperref[PROP_Rn_propagation]{Proposition \ref*{PROP_Rn_propagation}}. The idea is to compare $R_n$ with a function solution of a {\it bistable} equation. Bistable reaction-diffusion equations are PDE of the form
$$
\partial_{t} u = d\Delta u +f(u),
$$
where the function $f$ vanishes at three points: $0<\theta<\sigma$ and is such that $f<0$ on $(0,\theta)$ and $f>0$ on $(\sigma,\theta)$. There is a wide literature on this specific type of equations \cite{AronsonMultidimensional78, BerestyckiFront02, DucasseBlocking18, RossiFreidlin17}. In particular, we recall the two following technical results that we shall need:
\begin{propositionNOBREAK}[Sufficient condition for spreading in bistable equations]\label{prop_bistable}
Let $f$ be a Lipschitz continuous function such that there are $0<\theta<\sigma$ such that $f(0)=f(\theta)=f(\sigma)=0$ and $f<0$ on $(0,\theta)$ and $f>0$ on $(\theta,\sigma)$. Assume that $\int_0^\sigma f(x)dx>0$.

\medskip

Let $u$ be the solution to
\begin{equation}\label{EQ_bistable}
    \partial_{t} u = d\Delta u + f(u),\qquad t>0,\ x>0,
\end{equation}
with Dirichlet boundary condition $u(t,0)=0$ for $t>0$ and with initial datum $u_0\geq 0$ compactly supported.

Then, there is $c^\star>0$ such that, for all $\e>0$, there is $L>0$ such that if $u_0\geq (\theta+\e)\indicatrice{\intervalleff{0}{L}}$, the function $u$ spreads toward $\sigma$ with speed $c^{\star}>0$ (which depends only on $f$) in the sense that
$$
\sup_{\delta< x < ct} \vert u\OK{\prth{t,x}} -\sigma\vert \underset{\delta,t\to+\infty}{\longrightarrow}0,\qquad \forall c\in (0,c^{\star}),
$$
and
$$
\sup_{x > ct} \vert u\OK{\prth{t,x}} \vert \underset{t\to+\infty}{\longrightarrow}0,\qquad \forall c> c^{\star}.
$$
\end{propositionNOBREAK}
This result is classical and is based on constructing appropriate compactly supported subsolutions to \eqref{EQ_bistable}. We refer to \cite{AronsonMultidimensional78, DucasseBlocking18} for proofs of this facts.

The next proposition tells us that, when we have a family of bistable nonlinearities that converge to a KPP nonlinearity, then the spreading speeds associated also converge. We shall use it with our KPP nonlinearities $f_n$ defined in \eqref{EQ_def_fn}.
\begin{proposition}[Convergence of bistable speeds]\label{prop_cv_speed}
Let $n\in \intervalleE{1}{\Nlast}$ and consider the KPP nonlinearity $f_n$ defined in \eqref{EQ_def_fn}.

Let $(f_\eta)_{\eta>0}$ be a family of bistable nonlinearities such that $f_\eta \to f_n$ locally uniformly as $\eta\to 0$.
For each $\eta>0$, let us denote $c_\eta$ the corresponding speed of spreading given by \hyperref[prop_bistable]{Proposition \ref*{prop_bistable}}. Then
$$
c_\eta \underset{\eta\to 0}{\longrightarrow}c_n.
$$
\end{proposition}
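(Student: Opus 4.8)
The plan is to establish the two one-sided bounds $\limsup_{\eta\to0}c_\eta\leq c_n$ and $\liminf_{\eta\to0}c_\eta\geq c_n$ separately, and then conclude by squeezing. Throughout I will use that $c_n=2\sqrt{d_n f_n'(0)}$ is exactly the KPP minimal speed associated to $f_n$, where $f_n'(0)=\alpha_n\Sa{n-1}-\mu_n>0$ (since $n\leq\Nlast$), and that, $f_n$ being concave with $f_n(0)=0$, one has $f_n(u)\leq f_n'(0)\,u$ for all $u\geq0$. A preliminary observation is that the zeros $\theta_\eta<\sigma_\eta$ of $f_\eta$ converge, by local uniform convergence and the sign conditions, to the zeros $0$ and $\Sa{n}$ of $f_n$.

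For the upper bound I would use an exponential supersolution. The aim is to show that for every $\e>0$ one has $f_\eta(u)\leq(f_n'(0)+\e)\,u$ for all $u\in[0,\sigma_\eta]$ once $\eta$ is small enough: on $(0,\theta_\eta)$ this is immediate since $f_\eta<0$ there, while on $[\theta_\eta,\sigma_\eta]$ it should follow from the local uniform convergence combined with $f_n(u)\leq f_n'(0)u$. Granting this, the function $\bara{u}\prth{t,x}=e^{-\lambda(x-ct)}$ with $c=2\sqrt{d_n(f_n'(0)+\e)}$ and $\lambda=\sqrt{(f_n'(0)+\e)/d_n}$ satisfies $\partial_t\bara{u}-d_n\Delta\bara{u}-f_\eta(\bara{u})\geq(c\lambda-d_n\lambda^2-(f_n'(0)+\e))\bara{u}\geq0$, so it is a supersolution of $\partial_t u=d_n\Delta u+f_\eta(u)$. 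Comparing it with the solution that spreads at speed $c_\eta$ (given by \hyperref[prop_bistable]{Proposition \ref*{prop_bistable}}) forces $c_\eta\leq c$, and letting $\e\to0$ gives $\limsup_\eta c_\eta\leq c_n$. In particular the speeds $c_\eta$ are bounded, which I will need below.

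For the lower bound I would argue by contradiction through the traveling-wave profiles. Each bistable $f_\eta$ admits a unique decreasing front $\prth{c_\eta,U_\eta}$ solving $d_n U_\eta''+c_\eta U_\eta'+f_\eta(U_\eta)=0$ with $U_\eta(-\infty)=\sigma_\eta$, $U_\eta(+\infty)=0$, normalized by $U_\eta(0)=\sigma_\eta/2$. Since the $c_\eta$ are bounded, a subsequence satisfies $c_\eta\to c_*\leq c_n$; by ODE/elliptic estimates the profiles converge locally uniformly to a nonnegative, nonincreasing $U_*$ solving $d_n U_*''+c_*U_*'+f_n(U_*)=0$ with $U_*(0)=\Sa{n}/2$, hence nontrivial, and $U_*(+\infty)=0$ (the only zero of $f_n$ below $\Sa{n}/2$). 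Linearizing at $0$, if $c_*<c_n=2\sqrt{d_n f_n'(0)}$ the characteristic roots $\tfrac{-c_*\pm\sqrt{c_*^2-4d_n f_n'(0)}}{2d_n}$ are complex, so $U_*$ must oscillate and change sign near $+\infty$, contradicting $U_*\geq0$. Thus $c_*\geq c_n$, so $c_*=c_n$; as every subsequence has a further subsequence converging to $c_n$, the whole family satisfies $c_\eta\to c_n$.

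The main obstacle is the uniform control of $f_\eta$ near $u=0$ in the upper bound. Local uniform convergence only yields $f_\eta(u)\leq f_n'(0)u+\omega_\eta$ with $\omega_\eta=\|f_\eta-f_n\|_{L^\infty[0,\sigma_\eta]}\to0$, and dividing by $u$ on $[\theta_\eta,\sigma_\eta]$ requires $\omega_\eta/\theta_\eta\to0$, i.e. the convergence must be fast relative to the vanishing threshold $\theta_\eta$. This is precisely where the bistable sign structure ($f_\eta<0$ on $(0,\theta_\eta)$), together with the fact that the approximating family constructed in \hyperref[SS_Propa_Rn]{Section \ref*{SS_Propa_Rn}} can be taken below $f_n$ near the origin, makes the domination $f_\eta(u)\leq(f_n'(0)+\e)u$ valid. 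On the lower-bound side, the technical heart is the compactness and normalization of the fronts $U_\eta$ and the classical no-oscillation argument for nonnegative monostable-type profiles, which is what pins the limiting speed to the \emph{minimal} KPP speed $c_n$ rather than to some larger value.
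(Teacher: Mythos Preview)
The paper does not give a self-contained proof of this proposition; it simply points to \cite[Proposition 2.6]{RossiFreidlin17} and \cite{BerestyckiFront02}, remarking that the result ``can be obtained directly by taking the limit of the bistable traveling fronts for each $f_\eta$''. Your lower-bound argument --- passing to the limit in the normalised fronts $(c_\eta,U_\eta)$ and invoking the non-oscillation criterion at the origin to force $c_*\geq c_n$ --- is exactly this approach, and it is correct.

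Your reservation about the upper bound is legitimate at the level of generality in which the proposition is stated: local uniform convergence alone does not yield $f_\eta(u)\leq(f_n'(0)+\e)u$, because $\omega_\eta/\theta_\eta$ need not vanish (from $f_\eta(\theta_\eta)=0$ one gets $f_n(\theta_\eta)\leq\omega_\eta$, so $\theta_\eta$ is at most of order $\omega_\eta/f_n'(0)$, not larger). The paper does not address this and does not need to: the only family to which the proposition is actually applied is the explicit one defined just before \eqref{eq bis}, namely $f_\eta(v)=f_n(v-\eta)-\eta$ for $v\geq\eta$ and $f_\eta(v)=-v$ on $(0,\eta)$, and for that family one checks directly that $f_\eta(v)\leq f_n'(0)\,v$ for every $v>0$. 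Your exponential supersolution then gives $c_\eta\leq c_n$ outright, and with it the boundedness of $(c_\eta)$ needed for the compactness step. So your argument is complete for the paper's purposes; for the general statement one has to defer to the cited references.
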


Unlike the speed of propagation for KPP reaction-diffusion equations, there is no explicit formula for the speed of propagation for bistable equations. There are variational formulas (see \cite{HamelFormules99} for instance), but the proof of \hyperref[prop_cv_speed]{Proposition \ref*{prop_cv_speed}} can be obtained directly by taking the limit of the bistable traveling fronts for each $f_\eta$, see \cite[Proposition 2.6]{RossiFreidlin17} (we also refer to \cite{BerestyckiFront02} for a different approach).

\medskip

{We are now in position to prove \hyperref[PROP_Rn_propagation]{Proposition \ref*{PROP_Rn_propagation}}, which establishes the propagation of $R_n$.}

\medskip

\begin{proof}[\texorpdfstring{Proof of \hyperref[PROP_Rn_propagation]{Proposition \ref*{PROP_Rn_propagation}}}{Proof of Proposition \ref*{PROP_Rn_propagation}}]
The proof is done by induction. Let $n\in \intervalleE{1}{\Nlast}$ and assume that $\HR{n-1}$ holds true. Let us show that $\HR{n}$ also holds true.

\medskip

Let $\eta, L>0$ to be chosen small enough and large enough after.

\medskip

Because we assume that $\HR{n-1}$ holds true, \hyperref[prop_Kpp_perturbe_full]{Proposition \ref*{prop_Kpp_perturbe_full}} tells us that  $R_{n}$ solves \eqref{Kpp_perturbe_full} (the perturbed KPP equation), where $\e_{n}\prth{x}$ goes to zero as $\verti{x}$ goes to $+\infty$.

\medskip

Therefore, we can take $R>0$ such that $\vert \e_{n}\prth{x}\vert \leq \eta$ for $x >R$, so that the function $R_{n}$ satisfies
\begin{equation}\label{eq R eta}
	\partial_{t} R_{n} \geq d_{n} \Delta R_{n} + f_{n}(R_{n}) -\eta,
	\qquad t>0, \, x>R.
\end{equation}
We now prove that the function $R_n$ spreads toward $S_{n}^{\star}$ with the wanted speed toward the right (when $x\to+\infty$), the spreading toward the left (for $x\to -\infty$) can be done similarly.

Owing to \hyperref[LE_5]{Lemma \ref*{LE_5}}, up to increasing $R$ if needed, we can find $T>0$ large enough so that
$$
R_{n}(t,x)\geq \Sa{n}-\eta,\qquad \forall t\geq T, \, \forall x\in [R,R+L].
$$
We define $\rho(t,x) := R_n(t,x)+\eta$. It satisfies the three following properties
\renewcommand{\gap}{10mm}
\renewcommand{\gapp}{2mm}
\begin{equation*}
\left\lbrace
\begin{array}{llll}
	\rho(t,x)\geq \eta,
	&& \forall t>T, \, \forall x>R,\\[1mm]
	\rho(T,x) \geq S_{n}^{\star},
	&& \forall x\in [R,R+L],\\[1mm]
	\partial_{t} \rho \geq d_{n} \Delta \rho + f_\eta(\rho),
	&& t>T, \, x>R,
\end{array}
\right .
\end{equation*}
where (see \hyperref[FIG_schema_graph]{Figure \ref*{FIG_schema_graph}} below),
$$
	f_{\eta}(v) := \begin{cases}
        f_{n}(v-\eta) -\eta,\quad &\text{if}\quad v\geq\eta,\\
		-v,\quad &\text{if} \quad v \in (0,\eta).
    \end{cases}
$$
Observe that in the last inequality we can use $f_\eta$ as defined and not simply $f_n(\point - \eta) - \eta$, because the function $\rho$ is always larger that $\eta$, therefore, the values taken by $f_\eta(v)$ for $v<\eta$ do not matter.

The function $f_\eta$ is a bistable nonlinearity in the sense defined above, provided $\eta$ is sufficiently small. The function $f_\eta$ vanishes at $x=0$ and at two other points $0<\theta< \sigma$. Observe that, as $\eta$ goes to $0^+$, we have $\theta\to0$ and $\sigma\to S_{n}^{\star}$. The graph of $f_\eta$ is depicted below.

\refstepcounter{FIGURE}\label{FIG_schema_graph}
\begin{center}
\includegraphics[scale=1]{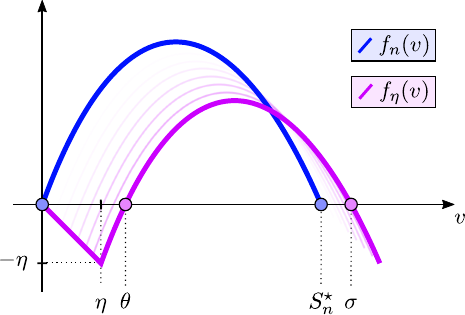}\\[1mm]
\begin{minipage}[c]{145mm}
\begin{footnotesize}
\begin{center}
\textsl{\textbf{Figure \theFIGURE} ---
	Representation of the functions $f_{n}$ and $f_{\eta}$. 
}
\end{center}
\end{footnotesize}
\end{minipage}
\end{center}
We take $\eta$ small enough so that $\theta<S_{n}^{\star}$ and we denote $v$ the solution to the following equation
\begin{equation}\label{eq bis}
\partial_{t} v = d_{n}\Delta v + f_\eta(v),\qquad t>T, \,  x  >R,
\end{equation}
with \glmt{initial} datum {$v(T,x) = S^\star_{n} \indicatrice{\intervalleff{R}{R+L}}\prth{x}$} and Dirichlet boundary condition $v(t,R) = 0$ for all $t>T$.

Thanks to \hyperref[prop_bistable]{Proposition \ref*{prop_bistable}}, we can choose $L$ large enough so that the solution $v$ of \eqref{eq bis} spreads toward $\sigma$ with some speed that we denote $c_\eta$ (to emphasize the dependence on $\eta$).

\medskip

Let us take $c\in (0,c_n)$. Up to taking $\eta$ small enough, owing to \hyperref[prop_cv_speed]{Proposition \ref*{prop_cv_speed}}, we can ensure $c_\eta > c$.

The parabolic comparison principle then implies that $\rho\geq v$ for any $t>T$ and $x>R$. Therefore, because $\rho = R_n + \eta $ and because $R_n$ is non-decreasing with respect to $t$, we find
$$
v(t,x) - \eta\leq R_n(t,x) \leq R_n^\infty(x), \qquad \forall t>T, \ x>R,
$$
hence, for all $t>T$,
$$
\sup_{\delta<\vert x\vert< c t} \vert R_n(t,x) - S_{n}^{\star}\vert \leq \max\left\{\sup_{\delta < x< ct } \vert R_n^\infty(x) - S_{n}^{\star}\vert,  \sup_{\delta < x < ct}\vert v(t,x)- \sigma\vert + \vert \sigma - S_{n}^{\star} -\eta\vert \right\}.
$$
Because $v$ spreads toward $\sigma$ with speed $c_\eta>c$ and because $R_n^\infty(x)$ goes to $S_n^\star$ as $x$ goes to $+\infty$, we find that
$$
\limsup_{\delta,t\to+\infty}\sup_{\delta<\vert x\vert< c t} \vert R_n(t,x) - S_{n}^{\star}\vert \leq \vert\sigma - S_{n}^{\star} -\eta\vert.
$$
We have $\sigma \to S_{n}^{\star}$ as $\eta\to0$. Because $\eta$ is arbitrary, we find
$$
\limsup_{\delta,t\to+\infty}\sup_{\delta<\vert x\vert< c t} \vert R_n(t,x) - S_{n}^{\star}\vert = 0,
$$
and this is true for all $c\in (0,c_n)$, that is, $R_n$ spreads at least with speed $c_n$ toward $S_n^\star$.

\medskip

In addition, $R_n$ converges toward $S_n^\star$ at most with speed $c_n$. Indeed, it follows from \hyperref[LE_4_control_above_Rn_In]{Lemma \ref*{LE_4_control_above_Rn_In}} (specifically \eqref{LE_4_control_above_Rn_In_4}) that 
$$
\sup_{\verti{x} > c t}
	\verti{R_{n}(t,x)} 
	\underset{
		t \rightarrow +\infty 
		}{
		\loongrightarrow}
	0,\qquad  \forall c>c_n.
 $$
Therefore, we have proven that, if $n\in \intervalleE{0}{\Nlast-1}$ and if $\HR{n-1}$ is true, then $\HR{n}$ also holds true. \hyperref[PROP_Rn_propagation]{Proposition \ref*{PROP_Rn_propagation}} is proved by induction.  
\end{proof}

\subsection{\texorpdfstring{Proof of \hyperref[TH_know_spread_description]{Theorem \ref*{TH_know_spread_description}}}{Proof of Theorem \ref*{TH_know_spread_description}}}\label{SS_proof_of_th}
We are now in position to prove \hyperref[TH_know_spread_description]{Theorem \ref*{TH_know_spread_description}}: for $n\in \intervalleE{1}{\Nlast}$, the opinion $n$ spreads while for $n>\Nlast$, the opinion disappears. We start with a technical lemma.

\begin{lemmaNOBREAK}\label{lem_bloc}
Assume $\Nlast <+\infty$. For all $n \geq \Nlast+1$, there is $\e \in L^\infty{(\mathbb{R})}$ such that $\e(x)\underset{\vert x \vert \to +\infty}{\longrightarrow}0$ and $R_n \leq \e$.
\end{lemmaNOBREAK}

\begin{proof}
[\texorpdfstring{Proof of \hyperref[lem_bloc]{Lemma \ref*{lem_bloc}}}{Proof of Lemma \ref*{lem_bloc}}]

$\bullet$ \textit{Step 1. The case $n=\Nlast+1$.}

Let us start with proving that there is $\e \in L^\infty(\mathbb{R})$, with $\e(x)\underset{\vert x \vert \to +\infty}{\longrightarrow}0$, such that $R_{\Nlast+1}~\leq~\e$. We write $n$ instead of $\Nlast+1$ for the sake of readability. Owing to \hyperref[LE_2_Rn_sb_s_KPP_perturbed]{Lemma \ref*{LE_2_Rn_sb_s_KPP_perturbed}}, we have
	$$
	\partial_{t}R_{n} \leq
	d_{n}\Delta R_{n} +
	\mu_{n} R_{n-1}\!\Prth{1-e^{-\frac{\alpha_{n}}{\mu_{n}}R_{n}}} -
	\mu_{n}R_{n} +
	\mu_{n}I_{n}^{0},\qquad t>0,\, x\in \R.
	$$
Because $\HR{\Nlast}$ holds true, owing to \hyperref[REM_Rn_lower_than_Sn_ast_plus_eps_de_x]{Remark \ref*{REM_Rn_lower_than_Sn_ast_plus_eps_de_x}}, we have that there is $\tilde{\e} \in L^\infty(\mathbb{R})$ such that $\tilde{\e}(x)\underset{\vert x\vert \to +\infty}{\longrightarrow}0$, and
	$$
	\partial_{t}R_{n} \leq
	d_{n}\Delta R_{n} +
	\mu_{n} S_{\Nlast}^{\star}\!\Prth{1-e^{-\frac{\alpha_{n}}{\mu_{n}}R_{n}}} -
	\mu_{n}R_{n} +
	\tilde{\e},\qquad t>0, \, x\in \R.
	$$
Owing to \hyperref[lem bounded]{Lemma \ref*{lem bounded}}, we also know that $R_n \leq K_n$ for some $K_n>0$.
 
Let $u(t,x)$ be solution of 
\begin{equation}\label{eq u}
\partial_{t} u = d_{n}\Delta u + f_n(u)+\tilde{\e},\qquad t>0, \, x\in \R,
\end{equation}
with initial datum $u(0 ,\point)= K_n$. Up to increasing $K_n$ if needed, we can ensure that $K_n$ is a stationary supersolution of \eqref{eq u}. Then, is it is classical that $u(t,x) \underset{t\to+\infty}{\searrow} U(x)$, where $U$ is a stationary solution of \eqref{eq u}. We have by comparison that $R_n(t,x)\leq R_n^\infty(x) \leq U(x)$ for all $t>0, \ x\in \R$.

Let us show that $U(x)\underset{\vert x \vert \to +\infty}{\longrightarrow}0$. To this aim, we take a sequence $(x_k)_{k\in \N}$ such that $\vert x_k\vert \to +\infty$ as $k\to+\infty$, and we define the translated functions $U_k(x) = U(x+x_k)$.

Owing to classical elliptic regularity results \cite{GilbargElliptic01}, we have that, up to extraction, $U_n \underset{n\to+\infty}{\longrightarrow} U_\infty$ (this convergences is locally in $W^{2,p}(\mathbb{R})$, for all $p>1$), where $U_\infty$ solves 
$$
d_{n}\Delta U_\infty + f_{n}(U_\infty)=0,
$$
and $0\leq U_\infty\leq K_n$. The only function that satisfies this is the function everywhere equal to zero. Indeed, let $z(t)$ be solution of the ODE $\dot z = f_n(z)$ with initial datum $z(0)=K_n$. We have $z(t) \underset{t\to+\infty}{\longrightarrow}0$.

The parabolic comparison principle implies that $z(t)\geq U_\infty(x)$ for all $t>0$, and $x\in \R$. Taking the limit $t\to +\infty$ implies that $U_\infty \equiv 0$.

The result then holds true for $n=\Nlast+1$ with $\e = U$.

\medskip

$\bullet$ \textit{Step 2. The case $n>\Nlast+1$.}

We prove the result by induction: we show that, if there is $n\geq 2$ such that $R_{n-1} \leq \e$, for some $\e \in L^\infty(\mathbb{R})$, with $\e(x)\underset{\vert x \vert \to +\infty}{\longrightarrow}0$, then the same is true for $R_n$.

\medskip

Owing to \hyperref[LE_2_Rn_sb_s_KPP_perturbed]{Lemma \ref*{LE_2_Rn_sb_s_KPP_perturbed}}, we have that  
$$
	\partial_{t}R_{n} \leq
	d_{n}\Delta R_{n} +
	\mu_{n} R_{n-1}\!\Prth{1-e^{-\frac{\alpha_{n}}{\mu_{n}}R_{n}}} -
	\mu_{n}R_{n} +
	\mu_{n}I_{n}^{0}.
$$
Let $\eta>0$ be such that $-\kappa := \alpha_n \eta - \mu_{n}<0$. There is $R>0$ such that
$$
\partial_{t}R_{n} \leq
	d_{n}\Delta R_{n} +
	\mu_{n} \eta\!\Prth{1-e^{-\frac{\alpha_{n}}{\mu_{n}}R_{n}}} -
	\mu_{n}R_{n} +
	\mu_{n}I_{n}^{0},\qquad \forall t>0, \, \forall  x>R.
$$
Using the concavity of $z\mapsto \mu_{n} \eta\!\Prth{1-e^{-\frac{\alpha_{n}}{\mu_{n}}z}} -
	\mu_{n}z $, we have
\begin{equation}\label{sur Rn tech}
	\partial_{t}R_{n} \leq
	d_{n}\Delta R_{n} - \kappa R_{n} +
	\mu_{n}I_{n}^{0},\qquad \forall t>0, \, \forall x>R.
\end{equation}
Owing to \hyperref[lem bounded]{Lemma \ref*{lem bounded}}, there is $K_n>0$ such that $R_n\leq K_n$. Therefore, up to taking $A>0$ large enough and $\lambda>0$ small enough, the function
$$
v(x) = A e^{-\lambda x}
$$
is a stationary supersolution of the equation \eqref{sur Rn tech}. By comparison, up to increasing $A$ if needed, we have $R_n \leq Ae^{-\lambda x}$. Using the same arguments for $x<0$, we find that $R_n  \leq A e^{-\lambda\vert x \vert}$.

Therefore, we have proven that, if there is $\e \in L^\infty(\mathbb{R})$, with $\e(x)\underset{\vert x \vert \to +\infty}{\longrightarrow}0$, such that $R_{n-1}\leq \e$, then the same holds true for $R_n$. The lemma follows by a direct induction.
\end{proof}

\medskip

We now have all the tools to prove our main result, namely \hyperref[TH_know_spread_description]{Theorem \ref*{TH_know_spread_description}}.

\medskip

\begin{proof}[\texorpdfstring{Proof of \hyperref[TH_know_spread_description]{Theorem \ref*{TH_know_spread_description}}}{Proof of Theorem \ref*{TH_know_spread_description}}]

\medskip

$\bullet$ \textit{Proof of \eqref{EQ_th_i}: $S_{n}$ spreads at most with speed $c_n$.}
Let $n\in \intervalleE{1}{\Nlast}$. Owing to \hyperref[LE_1_control_Sn_with_Rn]{Lemma \ref*{LE_1_control_Sn_with_Rn}}, we have, $S_{n}\leq R_n$. Therefore, \hyperref[PROP_Rn_propagation]{Proposition \ref*{PROP_Rn_propagation}} directly implies that
$$
\sup_{\vert x \vert > (c_n+\e)t}\vert S_{n}(t,x)\vert\leq \sup_{\vert x \vert > (c_n+\e)t}\vert R_n(t,x)\vert \underset{t\to+\infty}{\longrightarrow} 0.
$$
This proves the point \eqref{EQ_th_i} of the theorem.

\medskip

$\bullet$ \textit{Proof of \eqref{EQ_th_ii}: $S_{n}$ converges toward $S_{n}^{\star}$ in the intermediate region $c_{n+1}t < \vert x \vert < c_n t$.}
For $n=0$, this is a consequence of \hyperref[n=1]{Remark \ref*{n=1}}. Let $n\in \intervalleE{1}{\Nlast}$. We recall that we define $c_{\Nlast+1}=0$. Owing to \hyperref[PROP_Rn_propagation]{Proposition \ref*{PROP_Rn_propagation}}, point \eqref{EQ_propo_j},  we have (when $n=\Nlast$ this is actually a consequence of \hyperref[lem_bloc]{Lemma \ref*{lem_bloc}})
$$
\sup_{\vert x \vert > (c_{n+1}+\e)t} \vert R_{n+1}\vert \underset{t\to+\infty}{\longrightarrow}0,
$$
and, owing to the point \eqref{EQ_propo_jj}, there holds
$$
\sup_{(c_{n+1}+\e)t < \vert x \vert < (c_n - \e) t} \vert R_n - S_{n}^{\star} \vert \underset{t\to+\infty}{\longrightarrow} 0.
$$
Therefore, 
$$
\sup_{(c_{n+1}+\e)t < \vert x \vert < (c_n - \e) t} \vert R_n e^{-\frac{\alpha_{n+1}}{\mu_{n+1}}R_{n+1}} - S_{n}^{\star} \vert \underset{t\to+\infty}{\longrightarrow} 0.
$$
Owing to \hyperref[LE_1_control_Sn_with_Rn]{Lemma \ref*{LE_1_control_Sn_with_Rn}}, we have that $R_n e^{-\frac{\alpha_{n+1}}{\mu_{n+1}}R_{n+1}}\leq  S_{n} \leq R_n$, from which it follows that
$$
\sup_{(c_{n+1}+\e)t < \vert x \vert < (c_n - \e) t} \vert S_{n} - S_{n}^{\star} \vert \underset{t\to+\infty}{\longrightarrow} 0.
$$
This proves the point \eqref{EQ_th_ii} of the theorem.

\medskip

$\bullet$ \textit{Proof of \eqref{EQ_th_iii}: $S_{n}$ converges toward $S_{n}^{\dagger}$ in the region $\delta <\vert x \vert < c_{n+1}t $.}
Let $n\in \intervalleE{1}{\Nlast-1}$. Let $\e>0$ be fixed. We want to prove that
$$
\sup_{\delta < \vert x \vert < (c_{n+1}-\e)t} \vert S_{n} - S_{n}^{\dagger}\vert \underset{\delta,t\to+\infty}{\longrightarrow} 0.
$$
First, owing to \eqref{eq plus tard} from the proof of \hyperref[LE_2_Rn_sb_s_KPP_perturbed]{Lemma \ref*{LE_2_Rn_sb_s_KPP_perturbed}}, we have for all $t>0$ and $x\in \R$,
\begin{equation}\label{eq zone 3}
\partial_{t} R_{n+1} = d_{n+1} \Delta R_{n+1} + \mu_{n+1}(R_{n} - S_{n}) - \mu_{n+1} R_{n+1} + \mu_{n+1}I_{n+1}^{0}.
\end{equation}
Now, let us chose three sequences $(t_k)_{k\in\N},(x_k)_{k\in\N},(\delta_k)_{k\in\N}$ such that $\delta_k, t_k \to +\infty$ and $\delta_k < \vert x_k\vert < (c_{n+1}-\e)t_n$.

\medskip

We introduce the translated functions $R_{n+1}^k = R_{n+1}(\point +t_k,\point +x_k)$, $R_{n}^k = R_{n}(\point +t_k,\point +x_k)$ and $S_{n}^k = S_{n}(\point +t_k,\point +x_k)$.

For any $t>0$ and $x\in\R$, we have, for $k$ large enough,
$$
\vert R_{n+1}(t+t_k,x+x_k) - S_{n+1}^{\star}\vert  \leq \sup_{x+x_k \leq \vert y \vert \leq (c_{n+1}-\frac{\e}{2})t_k}\vert R_{n+1}(t+t_k,y) - S_{n+1}^{\star}\vert,
$$
and, owing to \hyperref[PROP_Rn_propagation]{Proposition \ref*{PROP_Rn_propagation}}, this goes to zero as $k$ goes to $+\infty$.

Therefore, owing to parabolic regularity estimates, we have that $R_{n+1}^k$ converges (up to a subsequence) locally uniformly in $W^{2,p}$, for all $p>1$, to the function everywhere constant equal to $S_{n+1}^{\star}$. Similarly, $R_n^k$ converges toward the function everywhere constant equal to $S_{n}^{\star}$.

Therefore, because we have
$$
\partial_{t} R^k_{n+1} = d_{n+1} \Delta R^k_{n+1} +\mu_{n+1}(R^k_{n} - S^k_n) - \mu_{n+1} R^k_{n+1} + \mu_{n+1}I_{n+1}^{0}(\point+x_k),
$$
taking the limit $k\to+\infty$ in this equation yields that, up to a subsequence,
$$
\lim_{k\to+\infty} S_{n}^k(t,x)  = S_{n}^{\star} - S_{n+1}^{\star}=S_{n}^\dagger,
$$
and this convergence is locally uniform. Therefore, we have proven that, for each sequences $(x_k)_k$, $(\delta_k)_k$, $(t_k)_k$, as above, we have, up to a subsequence, $S_{n}(t_k,x_k) \underset{k\to+\infty}{\longrightarrow} S_{n}^\dagger$. The point \eqref{EQ_th_iii} of the theorem then holds true.

\medskip

$\bullet$ \textit{Proof of \eqref{EQ_th_iv}: $S_{n}$ does not spread when $n>\Nlast$.}

Now, owing to \hyperref[lem_bloc]{Lemma \ref*{lem_bloc}}, we have that, for all $n\geq \Nlast+1$, there is $\e \in L^\infty(\R)$, with $\e(x)\underset{\vert x \vert \to +\infty}{\longrightarrow}0$, such that $R_n\leq \e$. Therefore, because $S_{n} \leq R_n$, the result holds true. 
\end{proof}

\section{Qualitative properties of the propagation sequences}\label{sec_quali}

This section is dedicated to the proof of \hyperref[th_quali]{Theorem \ref*{th_quali}} concerning the qualitative properties of $\Nlast$, the maximal complexity obtained by the population.

\medskip

The key point is to study the propagation sequences given by \hyperref[DEF_sequences]{Definition \ref*{DEF_sequences}}. To do so, we can rephrase the definition of the propagation sequences as follows: let $S_0^\star$, $(d_n)_{n\in\N^\star}$, $(\alpha_n)_{n\in\N^\star}$, $(\mu_n)_{n\in\N^\star}$ be given. For $n\in \N$, define
$$
\varphi_n(x) : = {x}\bigg/\prthhhh{1-e^{-\frac{\alpha_{n+1}}{\mu_{n+1}}x}}.
$$
This function is strictly increasing for $x>0$, we have $\varphi_n(x)\underset{x\to 0}{\longrightarrow} \frac{\mu_{n+1}}{\alpha_{n+1}}$ and $\varphi_n(x)>x$ for all $x>0$.

Let $\Nlast$ and $(S_n^\star)_{n\in \llbracket0, \Nlast\rrbracket}$ be the maximal complexity and the propagation sequence given by \hyperref[DEF_sequences]{Definition \ref*{DEF_sequences}}. We have
\begin{equation}\label{suite}
S_{n}^\star= \varphi_n(S_{n+1}^\star),\qquad \forall n\in \llbracket0,\Nlast-1\rrbracket,
\end{equation}
and we can characterize $\Nlast$ as follows:
\begin{equation}\label{nlast}
\Nlast\ \textit{is the smallest integer $k\in \N$ such that}~ \frac{\alpha_{k+1}}{\mu_{k+1}}S_k^\star \leq 1,
\end{equation}
with the convention that $\Nlast = +\infty$ if for all $k\in \N$, we have $\frac{\alpha_{k+1}}{\mu_{k+1}}S_k^\star >1$.

Observe that, when all the coefficients $(\alpha_n)_{n\in\N}$ and $(\mu_n)_{n\in \N}$ are independent of $n$, we necessarily have $\Nlast <+ \infty$. Indeed, if this were not the case, then we could take the limit $n\to+\infty$ in \eqref{suite}: the function $\varphi_n$ does not depend on $n$, and the sequence $(S_n^\star)_{n\in\N}$ is non-increasing, hence converges to a limit on $\R_+^\star$, and we would reach a contradiction because $\varphi_n$ has no fixed point on $\R_+^\star$.

\medskip

We start with proving the first point of \hyperref[th_quali]{Theorem \ref*{th_quali}}, that is, we show that the maximal complexity is a non-decreasing function of the initial population $S_0^\star$ and of the transmission parameters $(\alpha_n)_{n\in \N^\star}$ and non-increasing with respect to the recovery parameters $(\mu_n)_{n\in \N^\star}$

\medskip

\begin{proof}[\texorpdfstring{Proof of \hyperref[th_quali]{Theorem \ref*{th_quali} \raisebox{0.4mm}{$\displaystyle \scalebox{0.8}{$\displaystyle \boxed{1} $} $}\hspace{0.4mm}}}{Proof of Theorem \ref*{th_quali} 1.}]
For $n\in\N^\star$, denote
\begin{equation}\label{EQ_PHI_N}
\varphi_n(x) : = {x}\bigg/\prthhhh{1-e^{-\frac{\alpha_{n+1}}{\mu_{n+1}}x}}
\qquad
\text{and}
\qquad
\bara{\varphi_{n}}(x) : = {x}\bigg/\prthhhh{1-e^{-\frac{\bara{\alpha_{n+1}}}{\bara{\mu_{n+1}}}x}}.
\end{equation}
Owing to the hypotheses, we have $\varphi_{n} \leq \bara{\varphi_{n}}$.

\medskip

We denote $\Nlast := \Nlast(S_0^\star,\alpha_n,\mu_n)$ and  $\bara{\Nlast} := \Nlast(\bara{S_0^\star},\bara{\alpha_n},\bara{\mu_n})$ the maximal complexities reached by the systems with each set of parameters, and we denote $(S_n^\star)_{n\in \llbracket1,\Nlast\rrbracket}$ and $(\bara{S_n^\star})_{n\in \llbracket1,\bara{\Nlast}\rrbracket}$ the corresponding propagation sequences.

\medskip

Since $S_0^\star\leq \bara{S_0^\star}$, and because the functions $\varphi_{n}$ and $\bara{\varphi_{n}}$ are non-decreasing and $\varphi_{n} \leq \bara{\varphi_{n}}$, it follows from \eqref{suite} that, for each $n\in \llbracket1, \min\{\Nlast,\bara{\Nlast}\}\rrbracket$, we have
$$
S_n^\star \leq \bara{S_n^\star},
$$
and therefore
$$\prth{\alpha_{n+1}/\mu_{n+1}}S_n^\star \leq \prth{\bara{\alpha_{n+1}}/\bara{\mu_{n+1}}}\bara{S_n^\star}.$$
Owing to \eqref{nlast}, this gives
$$
\Nlast\leq \bara{\Nlast}.
$$
\end{proof}

We now turn to the second point of \hyperref[th_quali]{Theorem \ref*{th_quali}}, that is we show that, if the parameters are chosen adequately, it is possible to have $\Nlast =+\infty$. Moreover, it is possible to ensure that any proportion of the initial population can reach infinite complexity.

\medskip

\begin{proof}[\texorpdfstring{Proof of \hyperref[th_quali]{Theorem \ref*{th_quali} \raisebox{0.4mm}{$\displaystyle \scalebox{0.8}{$\displaystyle \boxed{2} $} $}\hspace{0.4mm}}}{Proof of Theorem \ref*{th_quali} 2.}]
Let us start by observing from \eqref{EQ_PHI_N}, that, for all $n\in \N^\star$,
$$
\varphi_{n}(x) \leq x + \frac{1}{\lambda_{n+1}},
$$
Where $\lambda_n : = \frac{\alpha_n}{\mu_n}$.
Therefore, owing to \eqref{suite}, we find, for all $n\in \N^\star$,
$$
S_n^\star \geq S_0^\star - \sum_{k=1}^n \frac{1}{\lambda_k}.
$$
Hence, up to choosing a sequence $(\lambda_n)_{n\in \N^\star}$ so that $\sum_{k=1}^{+\infty} \frac{1}{\lambda_k} \leq \e$, we ensure that 
$$
\lim_{n\to +\infty} S_n^\star \geq S_0^\star - \e.
$$
To conclude the proof, we need to apply \hyperref[TH_know_spread_description]{Theorem \ref*{TH_know_spread_description}}, and to do so, we need the sequence of the speeds $(c_n)_{n \in \N^\star}$ to be decreasing (\hyperref[H1]{Assumption \ref*{H1}}). One way to do this is to choose the diffusions $(d_n)_{n\in\N}$ so that this is true. We could also take all the diffusions equal (which is more natural from the modeling point of view) and multiply each $\alpha_n$ and $\mu_n$ by a coefficient $\e_n>0$, so that the ratios $\lambda_n$ are not changed while the speeds $c_n = 2\sqrt{d(\alpha_{n+1}\e_{n+1} S_n^\star - \e_{n+1}\mu_{n+1})}$ are decreasing.
\end{proof}

We now turn to the third point of \hyperref[th_quali]{Theorem \ref*{th_quali}}, that is, we show that the maximal complexity $\Nlast$ is equivalent to
${e^{{\alpha S_0^\star}/{\mu}}}/\prth{{\alpha S_0^\star}/{\mu}} = {e^{\mathcal{R}_0}}/{\mathcal{R}_0}$.

\medskip

\begin{proof}[\texorpdfstring{Proof of \hyperref[th_quali]{Theorem \ref*{th_quali} \raisebox{0.4mm}{$\displaystyle \scalebox{0.8}{$\displaystyle \boxed{3} $} $}\hspace{0.4mm}}}{Proof of Theorem \ref*{th_quali} 3.}]
Let $S_{0}^{\star}>0$ fixed. Let $(S_{n}^{\star})_{n\in\N^\star}$ be the propagation sequence, as given in \hyperref[DEF_sequences]{Definition \ref*{DEF_sequences}}. 

First, observe that, because the sequence $(S_{n}^{\star})_{n\in\N^\star}$ is decreasing, then the sequence of the speeds $(c_n)_{n\in\N^\star}=(2\sqrt{d(\alpha S_n^\star - \mu)})_{n\in\N^\star}$ is also strictly decreasing, hence \hyperref[H1]{Assumption \ref*{H1}} is verified. Therefore, \hyperref[TH_know_spread_description]{Theorem \ref*{TH_know_spread_description}} applies. We let $\lambda = \frac{\alpha}{\mu}$.

\medskip

By denoting $\varphi(x) : = {x}/\prth{1 - e^{-\lambda x}}$, we have by definition
$$
S_{n-1}^{\star} = \varphi(S_{n}^{\star}),
\qquad
\forall n\in \llbracket1,\Nlast\rrbracket.
$$
Because the $\alpha_n$ and $\mu_{n}$ are independent of $n$, as explained in the beginning of this section, we have $\Nlast<+\infty$. Owing to \eqref{nlast}, we have that
$$
S_{\Nlast}^{\star}\leq \frac{1}{\lambda}
\qquad
\text{and}
\qquad
S_{\Nlast-1}>\frac{1}{\lambda}.
$$
Therefore, because $\varphi$ is increasing,
\begin{equation}\label{S suite 1}
\frac{1}{\lambda}\, \leq \,S_{\Nlast-1}^{\star}\, \leq \,\varphi(\frac{1}{\lambda}).
\end{equation}
We denote $(u_n)_{n\in \N}$ the sequence defined by induction $u_{n+1} = \varphi(u_n)$ with $u_0 = \frac{1}{\lambda}$. Because $\varphi(x)>x$ the sequence $(u_n)_{n\in \N}$ is increasing. It follows that it diverges to $+\infty$.

Applying $\varphi$ in in \eqref{S suite 1} $\Nlast-1$ times, we get
\begin{equation}\label{equiv suite}
u_{\Nlast-1}
\, \leq \,
S^{\star}_0
\, \leq \,
u_{\Nlast}.
\end{equation}
Now, define $\Phi(x) = \int_0^x \frac{e^{\lambda z} - 1}{z}dz$. We have
\begin{equation}\label{ineg suite}
\vert \Phi(u_{n+1}) - \Phi(u_n) - \Phi^\prime(u_n)(u_{n+1} - u_n) \vert
\, \leq \,
\frac{1}{2}\sup_{z\in [u_n,u_{n+1}]} \vert \Phi^{\prime\prime}(z)\vert (u_{n+1} - u_n)^2.
\end{equation}
We have $u_{n+1} - u_n =  \frac{u_n}{e^{\lambda u_n} - 1} = \frac{1}{\Phi^\prime(u_n)}$, and this goes to zero when $n$ goes to $+\infty$. 

On the other hand, it is clear that $\Phi^{\prime\prime}$ is increasing for $z>0$. Therefore,
$$
\frac{1}{2}\sup_{z\in [u_n,u_{n+1}]}\vert \Phi^{\prime\prime}(z)\vert (u_{n+1} - u_n)^2
\, \leq \,
\frac{\Phi^{\prime\prime}(u_{n+1})}{2\Phi^{\prime}(u_n)^2}
\, \leq \,
\lambda \frac{u_n^2}{u_{n+1}}\frac{e^{\lambda u_{n+1}}}{(e^{\lambda u_n}-1)^2},
$$
and because $u_{n+1}-u_n$ goes to zero when $n$ goes to $+\infty$, we have that the right-hand side in \eqref{ineg suite} goes to zero.

Therefore, $\Phi(u_{n+1}) - \Phi(u_n) \to 1$ when $n$ goes to $+\infty$, and the Cesàro lemma implies
$$
\frac{\Phi(u_n)}{n} \underset{n\to+\infty}{\longrightarrow} 1.
$$
Now, applying $\Phi$ (which in increasing) to \eqref{equiv suite}, we get
$$
\Nlast(S_{0}^{\star})
\;
\underset{{S_{0}^{\star} \to +\infty}}{\scalebox{1.8}{$\displaystyle \sim $}}
\;
\Phi(S_{0}^{\star}).
$$
By observing that
$
\Phi(x)
\;
\underset{{x \to +\infty}}{\scalebox{1.8}{$\displaystyle \sim $}}
\;
\frac{e^{\lambda x}}{\lambda x}
$,
the result follows.
\end{proof}

\medskip

\noindent\textbf{Acknowledgements.}
\href{https://samueltreton.fr/english/}{Samuel Tréton} would like to express his gratitude to \href{https://vcalvez.perso.math.cnrs.fr/wacondy.html}{Vincent Calvez} for supporting his postdoctoral position.
This project has received funding from the European Research Council (ERC) under the European Union’s Horizon 2020 research and innovation programme (grant agreement No 865711).
This study contributes to the IdEx Université de Paris ANR-18-IDEX-0001. The research leading to these results has received funding from the ANR project “ReaCh” (ANR-23-CE40-0023-01).

\medskip

\nocite{*} 

\bibliographystyle{siam}  
\bibliography{biblio}

\begin{thebibliography}{10}

\bibitem{AronsonMultidimensional78}
{\sc D.~G. Aronson and H.~F. Weinberger}, {\em Multidimensional nonlinear
  diffusion arising in population genetics}, Advances in Mathematics, 30
  (1978), pp.~33--76.

\bibitem{BassNew69}
{\sc F.~M. Bass}, {\em A {{New Product Growth}} for {{Model Consumer
  Durables}}}, Management Science, 15 (1969), pp.~215--227.

\bibitem{BerestyckiFront02}
{\sc H.~Berestycki and F.~Hamel}, {\em Front propagation in periodic excitable
  media}, Communications on Pure and Applied Mathematics, 55 (2002),
  pp.~949--1032.

\bibitem{BerestyckiModel15}
{\sc H.~Berestycki, J.-P. Nadal, and N.~Rod{\'i}guez}, {\em A model of riots
  dynamics: {{Shocks}}, diffusion and thresholds}, Networks and Heterogeneous
  Media, 10 (2015), pp.~443--475.

\bibitem{BerestyckiAnalysis15}
{\sc H.~Berestycki and N.~Rodriguez}, {\em Analysis of a model for the dynamics
  of riots}, 2015.

\bibitem{BerestyckiGeneralizations15}
{\sc H.~Berestycki and L.~Rossi}, {\em Generalizations and {{Properties}} of
  the {{Principal Eigenvalue}} of {{Elliptic Operators}} in {{Unbounded
  Domains}}}, Communications on Pure and Applied Mathematics, 68 (2015),
  pp.~1014--1065.

\bibitem{BettencourtPower06}
{\sc L.~M.~A. Bettencourt, A.~{Cintr{\'o}n-Arias}, D.~I. Kaiser, and
  C.~{Castillo-Ch{\'a}vez}}, {\em The power of a good idea: {{Quantitative}}
  modeling of the spread of ideas from epidemiological models}, Physica A:
  Statistical Mechanics and its Applications, 364 (2006), pp.~513--536.

\bibitem{Bonnasse-GahotEpidemiological18}
{\sc L.~{Bonnasse-Gahot}, H.~Berestycki, M.-A. Depuiset, M.~B. Gordon,
  S.~Roch{\'e}, N.~Rodriguez, and J.-P. Nadal}, {\em Epidemiological modelling
  of the 2005 {{French}} riots: A spreading wave and the role of contagion},
  Scientific Reports, 8 (2018), p.~107.

\bibitem{BurbeckDynamics78}
{\sc S.~L. Burbeck, W.~J. Raine, and M.~J.~A. Stark}, {\em The dynamics of riot
  growth: {{An}} epidemiological approach}, The Journal of Mathematical
  Sociology, 6 (1978), pp.~1--22.

\bibitem{BurbeckDynamics78a}
\leavevmode\vrule height 2pt depth -1.6pt width 23pt, {\em The dynamics of riot
  growth: {{An}} epidemiological approach}, The Journal of Mathematical
  Sociology, 6 (1978), pp.~1--22.

\bibitem{ColemanIntroduction64}
{\sc J.~S. Coleman}, {\em Introduction to {{Mathematical Sociology}}}, Free
  Press of Glencoe, New York, 1964.

\bibitem{DawkinsSelfish90}
{\sc R.~Dawkins}, {\em The {{Selfish Gene}}}, vol.~2, Oxford University Press,
  USA, 2~ed., 1990.

\bibitem{DiekmannThresholds78}
{\sc O.~Diekmann}, {\em Thresholds and travelling waves for the geographical
  spread of infection}, Journal of Mathematical Biology, 6 (1978),
  pp.~109--130.

\bibitem{DucThresh}
{\sc R.~Ducasse}, {\em Threshold phenomenon and traveling waves for
  heterogeneous integral equations and epidemic models}, Nonlinear Anal., 218
  (2022), pp.~Paper No. 112788, 34.

\bibitem{DucassePropagation23}
{\sc R.~Ducasse and S.~Nordmann}, {\em Propagation properties in a
  multi-species {{SIR}} reaction-diffusion system}, Journal of Mathematical
  Biology, 87 (2023), p.~16.

\bibitem{DucasseBlocking18}
{\sc R.~Ducasse and L.~Rossi}, {\em Blocking and invasion for
  reaction--diffusion equations in periodic media}, Calculus of Variations and
  Partial Differential Equations, 57 (2018), p.~142.

\bibitem{DucrotSpreading21}
{\sc A.~Ducrot}, {\em Spreading speed for a {{KPP}} type reaction-diffusion
  system with heat losses and fast decaying initial data}, Journal of
  Differential Equations, 270 (2021), pp.~217--247.

\bibitem{DucrotConvergence14}
{\sc A.~Ducrot and T.~Giletti}, {\em Convergence to a pulsating travelling wave
  for an epidemic reaction-diffusion system with non-diffusive susceptible
  population}, Journal of Mathematical Biology, 69 (2014), pp.~533--552.

\bibitem{EvansPartial10}
{\sc L.~C. Evans}, {\em Partial {{Differential Equations}}}, vol.~19 of
  Graduate {{Studies}} in {{Mathematics}}, American Mathematical Society,
  Providence (Rhode Island), 2nd~ed., 2010.

\bibitem{FibichExact22}
{\sc G.~Fibich and S.~Nordmann}, {\em Exact description of {{SIR-Bass}}
  epidemics on {{1D}} lattices}, Discrete and Continuous Dynamical Systems, 42
  (2022), pp.~505--535.

\bibitem{FifeApproach77}
{\sc P.~C. Fife and J.~B. McLeod}, {\em The approach of solutions of nonlinear
  diffusion equations to travelling front solutions}, Archive for Rational
  Mechanics and Analysis, 65 (1977), pp.~335--361.

\bibitem{GilbargElliptic01}
{\sc D.~Gilbarg and N.~S. Trudinger}, {\em Elliptic {{Partial Differential
  Equations}} of {{Second Order}}}, vol.~224 of Classics in {{Mathematics}},
  Springer, Berlin, Heidelberg, 2001.

\bibitem{GoffmanGeneralization64}
{\sc W.~Goffman and V.~A. Newill}, {\em Generalization of {{Epidemic Theory}}:
  {{An Application}} to the {{Transmission}} of {{Ideas}}}, Nature, 204 (1964),
  pp.~225--228.

\bibitem{HamelFormules99}
{\sc F.~Hamel}, {\em {Formules min-max pour les vitesses d'ondes progressives
  multidimensionnelles}}, Annales de la Facult{\'e} des sciences de Toulouse :
  Math{\'e}matiques, 8 (1999), pp.~259--280.

\bibitem{HosonoTraveling95}
{\sc Y.~Hosono and B.~Ilyas}, {\em Traveling waves for a simple diffusive
  epidemic model}, Mathematical Models and Methods in Applied Sciences, 5
  (1995), pp.~935--966.

\bibitem{KallenThresholds84}
{\sc A.~K{\"a}ll{\'e}n}, {\em Thresholds and travelling waves in an epidemic
  model for rabies}, Nonlinear Analysis: Theory, Methods \& Applications, 8
  (1984), pp.~851--856.

\bibitem{KermackContributions91b}
{\sc W.~O. Kermack and A.~G. McKendrick}, {\em Contributions to the
  mathematical theory of epidemics---{{I}}}, Bulletin of Mathematical Biology,
  53 (1991), pp.~33--55.

\bibitem{KermackContributions91}
\leavevmode\vrule height 2pt depth -1.6pt width 23pt, {\em Contributions to the
  mathematical theory of epidemics---{{II}}. the problem of endemicity},
  Bulletin of Mathematical Biology, 53 (1991), pp.~57--87.

\bibitem{KermackContributions91a}
\leavevmode\vrule height 2pt depth -1.6pt width 23pt, {\em Contributions to the
  mathematical theory of epidemics---{{III}}. {{Further}} studies of the
  problem of endemicity}, Bulletin of Mathematical Biology, 53 (1991),
  pp.~89--118.

\bibitem{KolmogorovStudy37}
{\sc A.~Kolmogorov, I.~Petrovskii, and N.~Piskunov}, {\em Study of a diffusion
  equation that is related to the growth of a quality of matter, and its
  application to a biological problem}, Moscow University Mathematics Bulletin,
  1 (1937), pp.~1--26.

\bibitem{KreinLinear48}
{\sc M.~G. Krein and M.~A. Rutman}, {\em Linear operators leaving invariant
  a~cone in a~{{Banach}} space'', {{Uspekhi Mat}}. {{Nauk}}, 3:1(23) (1948),
  3--95}, Uspekhi Mat. Nauk, 3 (1948), pp.~3--95.

\bibitem{LahireStructures23}
{\sc B.~Lahire}, {\em Les Structures Fondamentales Des Soci{\'e}t{\'e}s
  Humaines}, La D{\'e}couverte, 2023.

\bibitem{LiebermanSecond96}
{\sc G.~M. Lieberman}, {\em Second {{Order Parabolic Differential Equations}}},
  World Scientific, 1996.

\bibitem{MartinMathematiques02}
{\sc O.~Martin}, {\em {Math{\'e}matiques et sciences sociales au XX{\`e}me
  si{\`e}cle}}, Revue d'Histoire des Sciences Humaines, 6 (2002), pp.~3--13.

\bibitem{MurrayMathematical03a}
{\sc J.~D. Murray}, {\em Mathematical {{Biology}}: {{II}}: {{Spatial Models}}
  and {{Biomedical Applications}}}, vol.~18 of Interdisciplinary {{Applied
  Mathematics}}, Springer New York, NY, New York, 2003.

\bibitem{PerthameTransport07}
{\sc B.~Perthame}, {\em Transport {{Equations}} in {{Biology}}}, Frontiers in
  {{Mathematics}}, Birkh{\"a}user, Basel, 2007.

\bibitem{RapoportSpread53}
{\sc A.~Rapoport}, {\em Spread of information through a population with
  socio-structural bias: {{I}}. {{Assumption}} of transitivity}, The Bulletin
  of Mathematical Biophysics, 15 (1953), pp.~523--533.

\bibitem{RossiFreidlin17}
{\sc L.~Rossi}, {\em The {{Freidlin}}--{{G{\"a}rtner}} formula for general
  reaction terms}, Advances in Mathematics, 317 (2017), pp.~267--298.

\end{thebibliography}

\end{document}